\DeclareSymbolFontAlphabet{\mathbb}{AMSb}
\DeclareSymbolFontAlphabet{\mathbbl}{bbold}
\definecolor{codegreen}{rgb}{0,0.6,0}
\definecolor{codegray}{rgb}{0.5,0.5,0.5}
\definecolor{codepurple}{rgb}{0.58,0,0.82}
\definecolor{backcolour}{rgb}{0.95,0.95,0.92}
\definecolor{ForestGreen}{RGB}{34,139,34}
\NewDocumentCommand{\codeword}{v}{%
	\texttt{\textcolor{blue}{#1}}%
}
\lstdefinestyle{mystyle}{
	backgroundcolor=\color{backcolour},   
	commentstyle=\color{codegreen},
	keywordstyle=\color{magenta},
	numberstyle=\tiny\color{codegray},
	stringstyle=\color{codepurple},
	basicstyle=\ttfamily\footnotesize,
	breakatwhitespace=false,         
	breaklines=true,                 
	captionpos=b,                    
	keepspaces=true,                 
	numbers=left,                    
	numbersep=5pt,                  
	showspaces=false,                
	showstringspaces=false,
	showtabs=false,                  
	tabsize=2
}
\definecolor{highlight}{HTML}{DFEFF9}
\definecolor{algm}{HTML}{D4EFDF} % background color
\definecolor{soft}{HTML}{F2D7D5} % background color
\def\namedlabel#1#2{\begingroup
	#2%
	\def\@currentlabel{#2}%
	\phantomsection\label{#1}\endgroup
}
\newcommand{\Reals}{\mathbb{R}} % Real numbers
\newcommand{\R}{\Reals} % Real numbers x2
\DeclareMathOperator*{\argmin}{arg\,min}
\newcommand{\E}[2][]{\operatorname{\mathbb{E}}_{#1}\left[ #2\right]} % Exp value
\newcommand{\ab}{\bm{a}}
\newcommand{\ub}{\bm{u}}
\newtheorem{definition}{Definition}[section]
\newtheorem{theorem}{Theorem}[section]
\newtheorem{lemma}{Lemma}[section]
\newtheorem{proposition}{Proposition}[section]
\newtheorem{corollary}{Corollary}[section]
\newtheorem{remark}{Remark}[section]
\pgfplotsset{compat=newest}
\newcommand{\snOne}{\mathbb{S}^{n-1}}
\newcommand{\sNOne}{\mathbb{S}^{N-1}}
\newcommand{\N}{\mathbb{N}}
\newcommand{\F}{\mathcal{F}}
\newcommand{\n}{\mathcal{N}}
\newcommand{\pr}{\mathbb{P}}
\newcommand{\M}{\mathcal{M}}
\newcommand{\ds}{\displaystyle}
\newcommand{\abs}[1]{\left\lvert#1\right\rvert}
\newcommand{\intbracket}[1]{\left[\!\left[#1\right]\!\right]}
\newcommand{\accolade}[1]{\left\lbrace#1\right\rbrace}
\newcommand{\prob}[1]{\mathbb{P}\left(#1\right)}
\newcommand{\var}[1]{\mathbb{V}\left(#1\right)}
\newcommand{\norme}[1]{\left\lVert#1\right\rVert}
\newcommand{\scal}[2]{\left\langle#1,#2\right\rangle}
\newcommand{\normii}[1]{{\left\lVert#1\right\rVert}_{2}}
\newcommand{\normiin}[1]{{\left\lVert#1\right\rVert}_{2,n}}
\newcommand{\normiiN}[1]{{\left\lVert#1\right\rVert}_{2,N}}
\newcommand{\norminf}[1]{{\left\lVert#1\right\rVert}_{\infty}}
\newcommand{\comb}[2]{\begin{pmatrix}
{#1}\\
{#2}
\end{pmatrix}}
\newcommand{\rn}{\mathbb{R}^n}
\newcommand{\rN}{\mathbb{R}^N}
\newcommand{\rNn}{\mathbb{R}^{N\times n}}
\newcommand{\rnN}{\mathbb{R}^{n\times N}}
\newcommand{\vi}{\bm{v}}
\newcommand{\Hmax}{\mathfrak{H}_{\max}}
\newcommand{\x}{\bm{x}}
\newcommand{\y}{\bm{y}}
\newcommand{\Amatrix}{\mathfrak{A}}
\newcommand{\aij}{a_{ij}}
\newcommand{\Xij}{X_{ij}}
\newcommand{\Rij}{R_{ij}}
\newcommand{\Yij}{Y_{ij}}
\newcommand{\dirac}[1]{\bbdelta_{#1}}
\newcommand{\normPsiTwo}[1]{{\left\lVert#1\right\rVert}_{\psi_2}}
\newcommand{\Hfrac}{\mathfrak{H}}
\newcommand{\Zij}{Z_{ij}}
\newcommand{\Hij}{\mathfrak{H}_{ij}}
\newcommand{\Uij}{U_{ij}}
\newcommand{\bl}{\color{black}}
\newcommand{\bbl}{\color{black}}
\title{A class of sparse Johnson--Lindenstrauss transforms and analysis of their extreme singular values\thanks{To appear in the {\it SIAM Journal on Matrix Analysis and Applications}. This work was supported in part by the U.S.~Department of Energy, Office of Science, Office of Advanced Scientific Computing Research Applied Mathematics Program under Contract Nos.\ DE-AC02-05CH11231 and DE-AC02-06CH11357.}}
\author{
	\href{mailto:kdzahini@anl.gov}{K. J. Dzahini\orcidlink{0000-0002-1515-4251}}\thanks{Argonne National Laboratory, 9700 S. Cass Avenue, Lemont, IL 60439, USA (\href{https://www.anl.gov/profile/kwassi-joseph-dzahini}{www.anl.gov/profile/kwassi-joseph-dzahini}).
	}
	\and
	\href{mailto:wild@lbl.gov}{S. M. Wild\orcidlink{0000-0002-6099-2772}}\thanks{Lawrence Berkeley National Laboratory, 1 Cyclotron Road, Berkeley, CA 94720, USA ( \href{https://wildsm.github.io/}{wildsm.github.io/}).
	}
}
\date{\today}
\begin{document}

%\linenumbers % To make it easier for reviewers

\maketitle

%\vspace*{-0.5cm}

\noindent
{\bf Abstract:} The Johnson--Lindenstrauss (JL) lemma is a powerful tool for dimensionality reduction in modern algorithm design. The lemma states that any set of high-dimensional points in a Euclidean space can be {\bbl projected into} lower dimensions while approximately preserving pairwise Euclidean distances. Random matrices satisfying this lemma are called JL transforms (JLTs). Inspired by existing $s$-hashing JLTs with exactly $s$ nonzero elements on each column, {\bl the present work introduces an ensemble of sparse matrices encompassing so-called} $s$-hashing-like matrices whose expected number of nonzero elements on each column is~$s$. {\bl The} independence of the sub-Gaussian entries of {\bl these} matrices and the knowledge of their exact distribution play an important role in their analyses. Using properties of independent sub-Gaussian random variables, these matrices are demonstrated to be JLTs, and their {\bbl (nontrivial)} smallest{\footnote{\bbl Throughout the manuscript, ``smallest singular value'' refers to the smallest nontrivial (that is, not trivially zero) singular value of a given matrix.}} and largest singular values are estimated non-asymptotically using a technique from geometric functional analysis. {\bl As the dimensions of the matrix grow to infinity,} these singular values are proved to converge almost surely to fixed quantities {\bl (by using the universal Bai--Yin law), and in distribution to the Gaussian orthogonal ensemble (GOE) Tracy--Widom law after proper rescalings. Understanding the behaviors of extreme singular values is important in general because they are often used to define} a measure of stability of matrix algorithms. For example, JLTs were recently used in derivative-free optimization algorithmic frameworks to select random subspaces in which are constructed random models or poll directions to achieve scalability, whence estimating their smallest singular value in particular helps determine the dimension of these subspaces. 

$ $\\
{\bf Keywords:} Johnson--Lindenstrauss transforms $\cdot$ Random matrix theory $\cdot$ Sparse matrices $\cdot$ Extreme singular values~$\cdot$ Sub-Gaussian random variables

$ $\\
{\bf Mathematics Subject Classification:} 60B20 $\cdot$ 60F10 $\cdot$ 60F15 $\cdot$ 68Q87
\clearpage

\section{Introduction}
% \mathfrak{H}=\frac{1}{\sqrt{n}}(\Xij)_{\underset{1\leq j\leq N}{1\leq i\leq n}}
Johnson--Lindenstrauss transforms (JLTs; see Definition~\ref{JLTransforms} below) are random matrices satisfying the following celebrated result~\cite[Lemma~1.1]{KaNel2014SparseLidenstrauss} due to Johnson and Lindenstrauss (JL)~\cite{JohnsonLind1984}.
\begin{lemma}\label{JLLemma}(Johnson--Lindenstrauss).
For any integer \scalebox{0.88}{$N>0$} and \scalebox{0.88}{$0<\epsilon, \delta<1/2$}, there exists a probability distribution on \scalebox{0.88}{$n\times N$} real matrices for \scalebox{0.88}{$n=\Theta(\epsilon^{-2}\log(1/\delta))$} such that \scalebox{0.88}{$\prob{(1-\epsilon)\normiiN{\x}\leq \normiin{\mathfrak{H}\x}\leq (1+\epsilon)\normiiN{\x}}\geq 1-\delta$}, for any \scalebox{0.88}{$\x\in \rN$} and {\bbl any matrix~$\mathfrak{H}$ drawn from the aforementioned distribution.}
\end{lemma}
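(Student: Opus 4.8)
\medskip
\noindent\textbf{Proof proposal.}
The plan is to exhibit one explicit distribution and reduce the norm-preservation bound to a one-dimensional concentration estimate. Take $\mathfrak{H}\in\rnN$ with independent entries $\mathfrak{H}_{ij}=g_{ij}/\sqrt{n}$, where the $g_{ij}$ are i.i.d.\ standard normal, and set $n=\bigl\lceil 8\,\epsilon^{-2}\log(2/\delta)\bigr\rceil=\Theta\bigl(\epsilon^{-2}\log(1/\delta)\bigr)$. Since both sides of the desired inequality are positively homogeneous in $\x$, it suffices to establish it for a fixed $\x\in\sNOne$, i.e.\ with $\normiiN{\x}=1$; the general statement then follows by rescaling.

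First I would reduce to a chi-square tail bound. For $i\in\{1,\dots,n\}$ write $(\mathfrak{H}\x)_i=Z_i/\sqrt{n}$ with $Z_i=\sum_{j=1}^N g_{ij}x_j$; because $\normiiN{\x}=1$, each $Z_i\sim\mathcal{N}(0,1)$, and the $Z_i$ are independent across $i$. Hence $\normiin{\mathfrak{H}\x}^2=\tfrac1n\sum_{i=1}^n Z_i^2$ is a normalized $\chi^2_n$ variable with mean $1$. Next, the standard Chernoff (Laurent--Massart) bound on the moment generating function of $\chi^2_n$ gives, for every $t\in(0,1)$,
\[
\prob{\,\bigl|\,\normiin{\mathfrak{H}\x}^2-1\,\bigr|>t\,}\;\le\;2\exp\!\bigl(-n t^2/8\bigr).
\]
Applying this with $t=\epsilon$ (recall $\epsilon<1/2$) and inserting the chosen value of $n$ makes the right-hand side at most $\delta$.

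Finally I would pass from the squared norm to the norm. On the complementary event, $1-\epsilon\le\normiin{\mathfrak{H}\x}^2\le 1+\epsilon$; taking square roots and using $\sqrt{1-\epsilon}\ge 1-\epsilon$ and $\sqrt{1+\epsilon}\le 1+\epsilon$ (valid for $\epsilon\in[0,1)$) yields $(1-\epsilon)\le\normiin{\mathfrak{H}\x}\le(1+\epsilon)$, which is the claim for unit $\x$. The only genuinely technical ingredient is the $\chi^2_n$ concentration inequality, and I expect that to be the main obstacle only in the mild sense of pinning down an explicit admissible constant in $n=\Theta(\epsilon^{-2}\log(1/\delta))$; the rotational-invariance shortcut that makes each $Z_i$ exactly standard normal trivializes everything else. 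If one insists on non-Gaussian entries --- the regime relevant to the sparse ensembles studied later --- that shortcut is unavailable and must be replaced by a Hanson--Wright-type concentration bound for quadratic forms in independent sub-Gaussian coordinates, which is where the real work would lie.
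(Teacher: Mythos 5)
Your proof is correct, and the argument you give — Gaussian entries, rotational invariance to reduce each coordinate of $\mathfrak{H}\x$ to a standard normal, a $\chi^2_n$ Chernoff/Laurent--Massart tail bound, and the elementary inequalities $\sqrt{1-\epsilon}\ge 1-\epsilon$ and $\sqrt{1+\epsilon}\le 1+\epsilon$ — is exactly the classic Indyk--Motwani / Dasgupta--Gupta route. One thing worth knowing: the paper does not prove Lemma~\ref{JLLemma} at all; it is stated as a cited result (from \cite{JohnsonLind1984}, phrased as in \cite[Lemma~1.1]{KaNel2014SparseLidenstrauss}), and the Gaussian construction you use is mentioned only in the related-work discussion. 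The paper's own technical JL results are Theorems~\ref{FirstJLT1} and~\ref{sTheorem}, which establish the analogous bound for the sparse $\bbmu_X(q)$ ensemble. There the rotational-invariance shortcut is unavailable, and the argument instead follows Matou\v{s}ek's sub-Gaussian blueprint: Khintchine-type moment bounds on $T_i=\sum_j x_j X_{ij}$ (Proposition~\ref{KhintchineProposition}), a Bernstein-like MGF bound for the centered subexponential variable $T_1^2-1$ (Lemma~\ref{expoLambdaU}), and a Chernoff step (Lemma~\ref{Ukt}). Your closing remark that for non-Gaussian entries the work shifts to controlling a quadratic form in sub-Gaussian coordinates is precisely right; the paper carries that out via the MGF-of-$T_1^2-1$ route rather than invoking Hanson--Wright, but the two are morally the same tool. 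The only cosmetic slip is that you could state explicitly that $t=\epsilon<1/2$ lies in the range where your Laurent--Massart specialization $\prob{|\normiin{\mathfrak{H}\x}^2-1|>t}\le 2e^{-nt^2/8}$ is valid; the derivation from $\prob{X\ge n+2\sqrt{nx}+2x}\le e^{-x}$ with $x=nt^2/8$ requires $1/\sqrt{2}+t/4\le 1$, which indeed holds for all $t<1$.
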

\vspace*{-0.4cm}
$ $\\As mentioned in~\cite{KaNel2014SparseLidenstrauss}, Lemma~\ref{JLLemma} is ``a key ingredient in the JL flattening theorem'', which states that an \scalebox{0.88}{$m$}-point set in any Euclidean space can be mapped to \scalebox{0.88}{$\mathcal{O}(\epsilon^{-2}\ln(m))$} dimensions so that all pairwise Euclidean distances are preserved up to a multiplicative factor between \scalebox{0.88}{$1-\epsilon$} and \scalebox{0.88}{$1+\epsilon$}. Moreover, the JL lemma, as a useful tool for linear regression and low-rank approximation, is also used to reduce the amount of storage that is necessary to store a dataset in streaming algorithms and to speed up solutions to high-dimensional problems and can further be used to speed up some clustering and string-processing algorithms~\cite{KaNel2014SparseLidenstrauss}. JLTs were also used recently in derivative-free optimization~\cite{AuHa2017,CoScVibook} to achieve scalability by means of random models~\cite{CarFowSha2022Randomises,CRsubspace2021,DzaWildSub2022,shao2022Thesis} or polling directions~\cite{RobRoy2022RedSpace} constructed in random subspaces.

Let \scalebox{0.88}{$q\geq 2$, $\hat{q}:=\sqrt{\frac{q}{2}}$}, and let \scalebox{0.88}{$X$} be a random variable distributed according to 
%\scalebox{0.88}{$\bbmu_X(q):=\frac{1}{q}\dirac{-\hat{q}}+\left(1-\frac{2}{q}\right)\dirac{0}+\frac{1}{q}\dirac{\hat{q}}$}, where \scalebox{0.88}{$\dirac{\nu}$} denotes the Dirac measure~\cite[p.~11]{tao2012topicsInRMT} centered on~\scalebox{0.88}{$\nu$}
{\bbl $\bbmu_X(q)$, a formal definition of which is provided in~\eqref{muXqmuYq}, 
where \scalebox{0.88}{$X\sim \bbmu_X(q)$} means} \scalebox{0.88}{$\prob{X= -\hat{q}}=\prob{X= \hat{q}} = \frac{1}{q}$} and \scalebox{0.88}{$\prob{X=0}=1-\frac{2}{q}$}. Consider the matrix $\mathfrak{H}$ defined by \scalebox{0.88}{$\mathfrak{H}=\frac{1}{\sqrt{n}}(\Xij)_{\underset{1\leq j\leq N}{1\leq i\leq n}}\in \rnN$} with \scalebox{0.88}{$N\geq n$}, that is, whose entries are independent copies of \scalebox{0.88}{$\frac{1}{\sqrt{n}}X$}, where \scalebox{0.88}{$X\sim \bbmu_X(q)$}. When \scalebox{0.88}{$q=\frac{2n}{s}$} for some \scalebox{0.88}{$s\in (0, n]$}, then $s$ is the expected number of nonzero elements per column of \scalebox{0.88}{$\Hfrac$}, which can be rewritten as \scalebox{0.88}{$\mathfrak{H}=\frac{1}{\sqrt{s}}(\Yij)_{\underset{1\leq j\leq N}{1\leq i\leq n}}$} (referred to as an $s${\it -hashing-like matrix}), as will be detailed later, where the \scalebox{0.88}{$\Yij\in \accolade{-1,0,1}$} are independent copies of \scalebox{0.88}{$Y$}
%\scalebox{0.88}{$Y\sim \frac{s}{2n}\dirac{-1}+\left(1-\frac{s}{n}\right)\dirac{0}+\frac{s}{2n}\dirac{1}$}
{\bbl distributed according to \scalebox{0.88}{$\prob{Y= -1}=\prob{Y= 1}=\frac{s}{2n}$} and \scalebox{0.88}{$\prob{Y=0}=1-\frac{s}{n}$}.}
%thus generalizing the projection matrices $\Hfrac$ considered by Achlioptas~\cite{Database2003Ach}  defined as above, which are demonstrated to be JLTs when $q\in\accolade{2, 6}$. \\
\vspace*{0.15cm}\\
{\textbf{Main goal.}} The goal of this manuscript is twofold. The first one is to derive sufficient conditions on $n${\bl , \scalebox{0.88}{$q$}} and $s$ so that \scalebox{0.88}{$\Hfrac$} is a JLT. The second is to perform asymptotic and non-asymptotic analyses of the {\it extreme singular values} of \scalebox{0.88}{$\Hfrac$}, that is, the largest \scalebox{0.88}{$s_1(\Hfrac)$} (the {\it soft edge}), and the smallest {\bbl or~$n$th largest} \scalebox{0.88}{$s_n(\Hfrac)$} (the {\it hard edge}~\cite{VershIntroAsym2010}).
\vspace*{0.15cm}\\
{\textbf{Brief overview of random matrix theory.}} Random matrix theory focuses on properties of matrices selected from some distribution over the set of all random matrices of the same dimensions~\cite{VershIntroAsym2010}. Since its inception, it has  been preoccupied mostly with the asymptotic properties of these matrices as their dimensions tend to infinity~\cite{RudVersh2010ExtrSingVal}. Foundational examples include  Wigner's semicircle law~\cite{WignerSemiCircle1958} for the empirical measures of eigenvalues of symmetric Gaussian matrices; the Marchenko--Pastur law~\cite{MarchenKoPastur1967} for Wishart matrices; and the Tracy--Widom~\cite{felSodTracyWid2010,SoshnikovTracyWid2002} and Bai--Yin~\cite{BaiYinSmallCovEig1993} laws for the extreme eigenvalues of Wishart matrices. Asymptotic random matrix theory is well suited for many purposes in statistical physics where random matrices are used to model infinite-dimensional operators~\cite{VershIntroAsym2010}, and it recently found several applications in large-scale optimization (see, e.g.,~\cite{PaqLeePedPaqSGDlarge2021,PaqMerPedHalting2022} and references therein). However, despite the incredibly precise predictions of asymptotic random matrix theory as dimensions tend to infinity, the limiting regime may not be very useful in many other areas such as compressed sensing and some areas of optimization, where one is interested in what happens in fixed dimensions, that is, in the {\it non-asymptotic} regime.

Results from non-asymptotic theory of random matrices are in demand in several of today's applications~\cite{RudVersh2010ExtrSingVal}, and 
% ~\cite{CRsubspace2021,DzaWildSub2022,RobRoy2022RedSpace}
% Non-asymptotic results on random matrices are in demand in a number of today’s applications that operate in high but ﬁxed dimensions
outstanding growth in the use of JLTs 
%Johnson--Lindenstrauss Transforms (JLTs, see Definition~\ref{JLTransforms} below) 
has recently attracted interest in myriad fields for solving large-scale optimization problems. 
%GAIL - interesting since earlier you said may not be useful in some areas of optimization. Is that because of fixed vs independent?
%Such matrices satisfy the result of the following celebrated lemma (see~\cite[Lemma~1.1]{KaNel2014SparseLidenstrauss}) due to Johnson and Lindenstrauss 
%(JL)~\cite{JohnsonLind1984}.
In~\cite{CRsubspace2021,DzaWildSub2022}, JLTs required to satisfy\footnote{\bbl Here \scalebox{0.88}{$\norme{\cdot}$} denotes any matrix norm (such as the Frobenius and spectral matrix norms), which is {\it consistent} with the Euclidean norm~\scalebox{0.88}{$\normii{\cdot}$}; that is \scalebox{0.88}{$\normii{\mathfrak{H}\x}\leq \norme{\mathfrak{H}}\normii{\x}$}.} \scalebox{0.88}{$\norme{\mathfrak{H}}\leq \Hmax$}, for some constant \scalebox{0.88}{$\Hmax>0$}, were used to construct low-dimensional random subspaces whose dimensions are independent of those of the underlying problem and in which are built random models used to achieve scalability. 

Besides the study of JLTs, another area where significant progress was recently made is that of the {\it non-asymptotic theory of extreme singular values} of random matrices \scalebox{0.88}{$\Hfrac$}. In fact, once \scalebox{0.88}{$\Hfrac$} is viewed as a linear operator, one may first want to control its magnitude by placing useful lower and upper bounds on it. As highlighted in~\cite[Section~2]{RudVersh2010ExtrSingVal}, such bounds are conveniently provided by the smallest {\bbl (or~$n$th largest)} singular value \scalebox{0.88}{$s_n(\Hfrac)$} and the largest singular value \scalebox{0.88}{$s_1(\Hfrac)$}. Understanding the behavior of extreme singular values of random matrices is crucial in a number of fields such as in statistics when analyzing datasets with a large but fixed number of parameters{\bl , and} in geometric functional analysis where random operators on finite-dimensional spaces {\bl are used.}
%, and in numerical linear algebra where the  condition number \scalebox{0.88}{$\kappa(\Hfrac):=s_1(\Hfrac)/s_n(\Hfrac)$} is often used as a measure of the stability of matrix algorithms. 
In a randomized direct-search algorithmic framework recently introduced in~\cite{RobRoy2022RedSpace}, JLTs \scalebox{0.88}{$\Hfrac\in\rnN$, $N\geq n$}, were used for the selection of random subspaces in which are defined  polling directions, and their smallest {\bbl (or~$n$th largest)} singular values were required to satisfy the condition \scalebox{0.88}{$s_n(\Hfrac)\geq \upphi$} (in addition to \scalebox{0.88}{$\norme{\mathfrak{H}}\leq \Hmax$} as above) with high probability for some constant \scalebox{0.88}{$\upphi>0$}. We remark that in all the algorithmic frameworks~\cite{CRsubspace2021,DzaWildSub2022,RobRoy2022RedSpace} discussed above, the random subspaces are defined as the range of \scalebox{0.88}{$\Hfrac^{\top}\in\rNn$} so that determining their dimension requires estimating \scalebox{0.88}{$s_n(\Hfrac)$} non-asymptotically.
\vspace*{0.15cm}\\
{\textbf{Related works.}} Many {\bbl variants} of the JL lemma exist. Indeed, after the original {\bbl result of} Johnson and Lindenstrauss~\cite{JohnsonLind1984}, Indyk and Motwani~\cite{IndykMotwani1998} (see also~~\cite[Theorem~2.1]{WoodrDavP2014} and~\cite[Theorem~3.1]{DzaWildSub2022}) provided a {\bbl variant} by showing that the entries of the projection matrix \scalebox{0.88}{$\Hfrac$} need only  be independent Gaussian \scalebox{0.88}{$\n(0, 1/n)$} random variables for appropriate values of \scalebox{0.88}{$n$}. Dasgupta and Gupta~\cite{DasguptaGupta2003JLT} simplified the proof of Indyk and Motwani using  moment-generating functions. After proving that the spherical symmetry of the projection coefficients on which  the results in~\cite{DasguptaGupta2003JLT,IndykMotwani1998} rely is not necessary to obtain JLTs that maintain \scalebox{0.88}{$\epsilon$}-distortion, Achlioptas~\cite{Database2003Ach} showed that concentration of the projected points is sufficient and chose the entries of $\Hfrac$ to be independent copies of \scalebox{0.88}{$\frac{1}{\sqrt{n}}X$}, where \scalebox{0.88}{$X\sim \bbmu_X(q)$} as above, with \scalebox{0.88}{$q\in\accolade{2, 6}$}. 
In that way, the expectation of the number~{\bbl $\mathscr{N}$} of nonzero elements on each column of \scalebox{0.88}{$\Hfrac$} is either \scalebox{0.88}{$s:=\E{{\bbl \mathscr{N}}}=n$} when \scalebox{0.88}{$q=2$} or \scalebox{0.88}{$s=n/3$} when \scalebox{0.88}{$q=6$}.  Matou\v{s}ek~\cite{MatJLT2008Variants} {\bbl proved} a generalized variant of the JL lemma (reported as Theorem~\ref{Matousek31} below) using properties of sub-Gaussian random variables {\bbl by considering} in particular the above projection matrix using \scalebox{0.88}{$q=2/C_0\upalpha^2\ln(N/\epsilon\delta)\geq 2$}, where \scalebox{0.88}{$\upalpha\in \left[\frac{1}{\sqrt{N}}, 1\right]$} and \scalebox{0.88}{$C_0$} is sufficiently large (see \cite[Theorem~4.2]{MatJLT2008Variants} or~\cite[Theorem~2]{FedSchJohHeo2018}, reported as Theorem~\ref{MatTheorem42} below). We note, however,  that the latter result holds only for unit vectors \scalebox{0.88}{$\x\in \rN$} with \scalebox{0.88}{$\norminf{\x}\leq \upalpha$}, that is, for \scalebox{0.88}{$\x$} with sufficiently small \scalebox{0.88}{$\ell_{\infty}$}-norm. On the other hand, using graph- and code-based constructions, Kane and Nelson proposed in~\cite{KaNel2014SparseLidenstrauss} JLTs in the spirit of those of Achlioptas but requiring that~$s$ be instead the exact number of nonzero elements of the form \scalebox{0.88}{$\frac{1}{\sqrt{s}}X$} on each column of \scalebox{0.88}{$\Hfrac$}, where \scalebox{0.88}{$X\sim\bbmu_X(2)$}, that is, a \scalebox{0.88}{$(\pm 1)$} or signed Bernoulli random variable. Consequently, they obtained a better asymptotic bound on the sparsity of the resulting \scalebox{0.88}{$\Hfrac$}, referred to as $s${\it -hashing matrices} in the present manuscript and~\cite{CRsubspace2021,DzaWildSub2022,RobRoy2022RedSpace}.

While the above Gaussian approach by Indyk and Motwani is among the simplest for obtaining JLTs, unfortunately the inequality \scalebox{0.88}{$\norme{\mathfrak{H}}\leq \Hmax$} required in~\cite{CRsubspace2021,DzaWildSub2022} {\bbl to hold with probability one} is satisfied for Gaussian matrices only with high probability for \scalebox{0.88}{$\Hmax=\Theta(\sqrt{N/n})$}~\cite[Corollary~3.11]{bandeira2016sharp}. %On the other hand as emphasized in~\cite{FedSchJohHeo2018}. 
On the other hand, even though for the aforementioned $s$-hashing matrices \scalebox{0.88}{$\norme{\mathfrak{H}}\leq\Hmax:=\sqrt{N}$} {\bbl with probability one},
to the best of our knowledge and as highlighted in~\cite{RobRoy2022RedSpace}, no  theoretical results exist that are specific to the extreme singular values~\scalebox{0.88}{$s_n(\mathfrak{H})$} and~\scalebox{0.88}{$s_1(\mathfrak{H})$} of $s$-hashing matrices, whose entries on each column are sub-Gaussian~\cite[Definition~2.5.6]{vershynin2018HDP}, although non-independent. The non-asymptotic behaviors of extreme singular values of random matrices with sub-Gaussian entries have been extensively studied~\cite{LitPajRud2005Pol,Rudelson2014LectNotes,RudVersh2008Lit,RudVersh2010ExtrSingVal,RudelsSmallRect2009,VershIntroAsym2010,vershynin2018HDP} with powerful techniques. Even if some of these techniques are intended for the study of matrices with independent sub-Gaussian rows~\cite[Theorems~5.39 and~5.58]{VershIntroAsym2010}-\cite[Theorem~4.6.1]{vershynin2018HDP} as is the case for the transpose of $s$-hashing matrices,  most of them focus on matrices with independent sub-Gaussian entries. Furthermore, the aforementioned rows were required to be {\it isotropic}~\cite[Definition~3.2.1]{vershynin2018HDP}, which seems not to be the case for these $s$-hashing matrices whose distribution of each column is to our knowledge a priori unknown.
\vspace*{0.15cm}\\
{\textbf{Our Contributions.}}
For any fixed \scalebox{0.88}{$q\geq 2$}, by considering the above random matrices \scalebox{0.88}{$\mathfrak{H}=\frac{1}{\sqrt{n}}(\Xij)_{\underset{1\leq j\leq N}{1\leq i\leq n}}\in \rnN$} with \scalebox{0.88}{$X\sim\bbmu_X(q)$}, therefore generalizing those of Achlioptas~\cite{Database2003Ach}, the present work shows in Theorem~\ref{FirstJLT1} that given \scalebox{0.88}{$\epsilon\in (0,1/2], \delta\in (0,1)$, $\Hfrac$} is a JLT provided that \scalebox{0.88}{$n\geq {\bl {\bbl C_{1,q}}}\epsilon^{-2}\ln(1/\delta)$}, {\bl  for a constant~\scalebox{0.88}{${\bbl C_{1,q}}$} depending only on~$q$}. Most important, this result holds not only for unit vectors $\x$ with sufficiently small \scalebox{0.88}{$\ell_{\infty}$}-norm or for specific \scalebox{0.88}{$q=2/C_0\upalpha^2\ln(N/\epsilon\delta)\geq 2$} as obtained by Matou\v{s}ek~\cite{MatJLT2008Variants} but for any vector \scalebox{0.88}{$\x\in \rN$}. Moreover, this work introduces $s$-hashing-like matrices \scalebox{0.88}{$\mathfrak{H}=\frac{1}{\sqrt{s}}(\Yij)_{\underset{1\leq j\leq N}{1\leq i\leq n}}$} with independent entries, where $s$ is the expected number of nonzero elements of the form \scalebox{0.88}{$\pm \frac{1}{\sqrt{s}}$} on each column of \scalebox{0.88}{$\Hfrac$}, and shows in Theorem~\ref{sTheorem} how $s$ and $n$ can be chosen so that \scalebox{0.88}{$\Hfrac$} is a JLT, a result in the spirit of that by Kane and Nelson~\cite{KaNel2014SparseLidenstrauss} for $s$-hashing matrices reported as~\cite[Theorem~3.2]{DzaWildSub2022}.

On the other hand, analyses of the  extreme singular values of the aforementioned $s$-hashing-like matrices have been performed. In the asymptotic case, using the Bai--Yin law~\cite{BaiYinSmallCovEig1993}, it has been demonstrated in Corollar{\bl ies}~\ref{CorBaiYin} {\bl and~\ref{CorBaiYin2}} that when \scalebox{0.88}{$N, n\to \infty$} while the  aspect ratio \scalebox{0.88}{$n/N\to\mathfrak{y}\in (0,1)$}, then \scalebox{0.88}{$s_n(\Hfrac)$} and \scalebox{0.88}{$s_1(\Hfrac)$} almost surely converge to \scalebox{0.88}{$\frac{1}{\sqrt{\mathfrak{y}}}-1$} and \scalebox{0.88}{$\frac{1}{\sqrt{\mathfrak{y}}}+1$}, {\bl respectively. Moreover, the latter results combined with existing ones from~\cite{felSodTracyWid2010,Peche2009Universality,SoshnikovTracyWid2002}, yield in Corollaries~\ref{s1Distribution} and~\ref{snDistribution} the convergence in distribution of \scalebox{0.88}{$s_1(\Hfrac)$} and \scalebox{0.88}{$s_n(\Hfrac)$} to the {\bl Gaussian orthogonal ensemble} (GOE) Tracy--Widom law.}  
%provided \scalebox{0.88}{$\frac{s}{n}=\frac{s(n)}{n}$} remains constant. 
These singular values are then estimated non-asymptotically through Theorems~\ref{LargestSingResult} and~\ref{SmallestSingResult} {\bl and Corollary~\ref{LargestSingCor}.} 
%in terms of explicit constants depending on~\scalebox{0.88}{$q, s, n$} or~\scalebox{0.88}{$N$}, as is also the case throughout the manuscript, unlike many prior works where constants are usually absolute but unknown. 
\vspace*{0.08cm}\\
{\textbf{Organization.}} The manuscript is organized as follows. Section~\ref{sec2} introduces {\bl an ensemble of sparse random matrices encompassing so-called} hashing-like matrices and provides sufficient conditions under which they are JLTs. Section~\ref{sec3} focuses on the {\bl asymptotic and non-asymptotic} analyses of the extreme singular values of these matrices, followed by a discussion and suggestions for future work.

%$ $\\
%\textcolor{red}{Todo}
%\begin{itemize}
%\item Literature review for RMT results, in particular sparse matrices.
%\item Focus on the fact that existing techniques almost all use independence of entries.
%\item Say that two-sided bounds techniques exist (Vershynin book), using independent and isotropic rows and motivate why this is not handled here for $\mathfrak{Q}^{\top}$... but future work: indeed, isotropy should be demonstrated!!
%\item Then say why motivated to assume independence.
%\item Wonder if in that independence case, one still has JLTs! 
%\item Briefly describe the model analyzed in this manuscript by means of its entries and mention that its construction will be detailed later in the next section. 
%\item Then specify that it still satisfies $\norme{\mathfrak{Q}}\leq \Hmax$ with $\Hmax\ \textcolor{blue}{=}\ \sqrt{N}$  !!
%\item Present the goals of the paper, what motivated them, the organization of the manuscript, etc.
%\end{itemize}

\section{Sparse random matrices as Johnson--Lindenstrauss transforms}\label{sec2}
In this section we introduce {\bl a general ensemble of sparse matrices encompassing} hashing-like matrices (see~\eqref{HMatrixEnsemble} and~\eqref{HashlikeEnsemble}) and derive sufficient conditions for {\bl these} matrices to be Johnson--Lindenstrauss transforms, using properties of sub-Gaussian variables.

All the random variables considered are defined on the same probability space \scalebox{0.88}{$(\Omega, \F, \pr)$}, where the  sample space $\Omega$ is a nonempty set with elements \scalebox{0.88}{$\omega$} referred to as  sample points and whose subsets are called {\it events}. \scalebox{0.88}{$\F$} is a \scalebox{0.88}{$\sigma$}-{\it algebra} consisting of a collection of these events, while the finite measure \scalebox{0.88}{$\pr$} defined on the measurable space \scalebox{0.88}{$(\Omega, \F)$} and satisfying \scalebox{0.88}{$\pr(\Omega)=1$} is referred to as the  probability measure. We denote by \scalebox{0.88}{$\mathscr{B}(\rn)$} the Borel \scalebox{0.88}{$\sigma$}-algebra of \scalebox{0.88}{$\rn$}, that is, the one generated by open sets of $\rn$. An \scalebox{0.88}{$\rn$}-valued random variable \scalebox{0.88}{$S$} is a measurable map from \scalebox{0.88}{$(\Omega, \F)$} into \scalebox{0.88}{$(\rn, \mathscr{B}(\rn))$}, where measurability means that \scalebox{0.88}{$\accolade{S\in \mathscr{E}}:=\accolade{\omega\in\Omega:S(\omega)\in \mathscr{E}}\in\F$} for all \scalebox{0.88}{$\mathscr{E}\in \mathscr{B}(\rn)$}. A real-valued  random matrix is a matrix-valued random variable, that is, a matrix in which all elements are \scalebox{0.88}{$\R$}-valued random variables.
The  Euclidean norm of an \scalebox{0.88}{$m$}-dimensional vector \scalebox{0.88}{$\x:=(x_1,\dots,x_m)^{\top}$} is denoted by \scalebox{0.88}{$\norme{\x}_{2,m}:=\left(\sum_{\ell=1}^{m}x_{\ell}^2\right)^{1/2}$}, while the norm of an operator or a matrix is denoted by \scalebox{0.88}{$\norme{\cdot}$}. {\bbl \scalebox{0.88}{$\mathbb{S}^{m-1}:=\accolade{\x\in\R^m:\norme{\x}_{2,m}=1}$} denotes the unit sphere of $\R^m$.} The indicator function of a set \scalebox{0.88}{$\mathscr{E}$} is denoted by \scalebox{0.88}{$\mathds{1}_{\mathscr{E}}$}. Given \scalebox{0.88}{$a, b\in \R$} with \scalebox{0.88}{$a\leq b$}, \scalebox{0.88}{$\intbracket{a, b}:=[a, b]\cap \mathbb{Z}$}, where \scalebox{0.88}{$\mathbb{Z}$} is the set of integers.

\subsection{The hashing-like matrix ensemble}
The construction of hashing-like {\bl matrices detailed below}, is inspired by the {\bl following} definition of {\it hashing matrices}~\cite{KaNel2014SparseLidenstrauss}. 

\begin{definition}\label{sHashingDef}($s$-hashing matrix).
Let \scalebox{0.88}{$\Zij$, $(i, j)\in \intbracket{1, n}\times \intbracket{1, N}$},  be independent signed Bernoulli variables, that is, \scalebox{0.88}{$\prob{\Zij=\pm 1}=\frac{1}{2}$}; and consider the random variables \scalebox{0.88}{$E_{ij}\ {\bl \in\accolade{0,1}}$} satisfying \scalebox{0.88}{$\prob{\sum_{i=1}^{n}E_{ij}=s}=1$} for some integer \scalebox{0.88}{$s\geq 1$} and all \scalebox{0.88}{$j\in \intbracket{1, N}$}. Then the matrix \scalebox{0.88}{$\Hfrac\in \rnN$} with entries \scalebox{0.88}{$\Hij:=\frac{1}{\sqrt{s}}E_{ij}\Zij$} has exactly (almost surely) $s$~nonzero elements on each column and is called an $s$-hashing matrix with hashing parameter $s$.
\end{definition}
For presentation purposes, the following definition inspired by~\cite[Definition~3]{CRsubspace2021} and~\cite[Lemma~1.1]{KaNel2014SparseLidenstrauss} is introduced.
\begin{definition}\label{JLTransforms}(Johnson--Lindenstrauss transform). 
	Let \scalebox{0.88}{$\epsilon, \delta\in (0, 1/2]$}. A random matrix \scalebox{0.88}{$\Hfrac\in \rnN$} is a JLT if, for any \scalebox{0.88}{$\x\in \rN$}, 
{\footnotesize{
\begin{equation}\label{JLTBound1}
\prob{(1-\epsilon)\normiiN{\x}\leq \normiin{\mathfrak{H}\x}\leq (1+\epsilon)\normiiN{\x}}\geq 1-\delta.
\end{equation}
}}
\end{definition}

Let \scalebox{0.88}{$\bm{e_i}, i\in \intbracket{1, n}$} be the standard basis vectors of \scalebox{0.88}{$\rn$}; and for all \scalebox{0.88}{$j\in \intbracket{1, N}$}, let \scalebox{0.88}{$\upsigma_j \subseteq\accolade{1, \dots, n}$} be independent random subsets, chosen uniformly, and with cardinality \scalebox{0.88}{$\abs{\upsigma_j}=s\leq n$}. 
%chosen according to the discrete uniform distribution. 
Let \scalebox{0.88}{$j\in \intbracket{1, N}$}, and consider the random orthogonal projection \scalebox{0.88}{$\mathcal{P}_{\upsigma_j}:=\sum_{i=1}^n \bm{e_i} \bm{e_i}^{\top}\mathds{1}_{i\in \upsigma_j}$}. Let \scalebox{0.88}{$Z_{ij}$} be independent random variables satisfying \scalebox{0.88}{$\prob{Z_{ij}=\pm 1}=\frac{1}{2}$},  and
define \scalebox{0.88}{$\mathcal{Z}_j:=(Z_{1j}, \dots, Z_{nj})^{\top}\in {\bbl \R^{n}}$} for all \scalebox{0.88}{$j$}.  Assume that the random variables \scalebox{0.88}{$Z_{ij}$} and \scalebox{0.88}{$\mathds{1}_{i\in \upsigma_j}$} are independent.
Then \scalebox{0.88}{$\mathcal{P}_{\upsigma_j} \mathcal{Z}_j = \left( Z_{1j}\mathds{1}_{1\in \upsigma_j}, \dots, Z_{nj}\mathds{1}_{n\in \upsigma_j} \right)^{\top}$}, where the random variables \scalebox{0.88}{$\mathds{1}_{i\in \upsigma_j}$} satisfy \scalebox{0.88}{$\sum_{i=1}^n\mathds{1}_{i\in \upsigma_j}=s$} almost surely. Thus, each column \scalebox{0.88}{$\mathcal{P}_{\upsigma_j} \mathcal{Z}_j$} of the random matrix \scalebox{0.88}{$\mathfrak{M}:=\left[\mathcal{P}_{\upsigma_1} \mathcal{Z}_1\ \cdots\  \mathcal{P}_{\upsigma_N} \mathcal{Z}_N   \right]\in \rnN$} has exactly~$s$ nonzero elements, and hence
\scalebox{0.88}{$\frac{1}{\sqrt{s}}\mathfrak{M}$} is an $s$-hashing matrix according to Definition~\ref{sHashingDef}, defining a JLT (see~\cite[Theorem~3.2]{DzaWildSub2022} or~\cite{KaNel2014SparseLidenstrauss}) for appropriate values of~$n$ and~$s$.  
%\vspace*{-0.25cm}
%$ $\\ 
While the random matrix \scalebox{0.88}{$\mathfrak{M}$} above has independent columns by construction, we note that entries \scalebox{0.88}{$\Yij:=Z_{ij}\mathds{1}_{i\in \upsigma_j}, i\in\intbracket{1, n}$} of a same column are not independent. \scalebox{0.88}{$\prob{\Yij=\pm 1}=\prob{\Zij=\pm 1}\prob{\mathds{1}_{i\in \upsigma_j}=1}=\frac{s}{2n}$}, while \scalebox{0.88}{$\prob{\Yij=0}=\prob{\mathds{1}_{i\in \upsigma_j}=0}=1-\frac{s}{n}$}, and \scalebox{0.88}{$\var{\Yij}=\E{\Yij^2}=\frac{s}{n}$}, since \scalebox{0.88}{$\Zij$} is independent of the Bernoulli variable \scalebox{0.88}{$\mathds{1}_{i\in \upsigma_j}$} with parameter \scalebox{0.88}{$p=\comb{n-1}{s-1}/\comb{n}{s}$}\scalebox{0.88}{$=\cfrac{(n-1)!s!(n-s)!}{(s-1)!(n-s)!n!}=\cfrac{s}{n}$}.  
\begin{proposition}\label{sProposition}
Let \scalebox{0.88}{${\bbl \tilde{\mathfrak{M}}}\in \rnN$} be a random matrix whose entries {\bbl $\tilde{Y}_{ij}$} are independent copies of a random variable~\scalebox{0.88}{$Y$} satisfying \scalebox{0.88}{$\prob{Y=\pm 1}=\frac{s}{2n}$} and \scalebox{0.88}{$\prob{Y=0}=1-\frac{s}{n}$}, for some real number \scalebox{0.88}{$s\in (0, n]$}. Then the number \scalebox{0.88}{${\bbl \mathscr{N}}$} of nonzero elements per column of \scalebox{0.88}{$\mathfrak{M}$} satisfies \scalebox{0.88}{$\E{{\bbl \mathscr{N}}}=s$}, \scalebox{0.88}{$\var{{\bbl \mathscr{N}}}=s\left(1-\frac{s}{n}\right)$}, \scalebox{0.88}{$\prob{{\bbl \mathscr{N}}=0}=\left(1-\frac{s}{n}\right)^n$}, and \scalebox{0.88}{$\prob{{\bbl \mathscr{N}}=n}=\left(\frac{s}{n}\right)^n$}.
\end{proposition}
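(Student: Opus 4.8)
The plan is to reduce the statement to a single observation: for any fixed column index, the number $\mathscr{N}$ of nonzero entries in that column is a sum of $n$ independent Bernoulli indicators, hence $\mathscr{N}$ follows a $\mathrm{Binomial}(n,s/n)$ law, after which the four claimed identities are just standard Binomial formulas.

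First I would fix $j\in\intbracket{1,N}$ and write $\mathscr{N} = \sum_{i=1}^{n}\mathds{1}_{\{\tilde{Y}_{ij}\neq 0\}}$. Because the entries $\tilde{Y}_{ij}$ are \emph{independent} copies of $Y$, the indicators $\mathds{1}_{\{\tilde{Y}_{ij}\neq 0\}}$, $i\in\intbracket{1,n}$, are independent Bernoulli variables with common success probability $p := \prob{Y\neq 0} = \prob{Y=1}+\prob{Y=-1} = \tfrac{s}{2n}+\tfrac{s}{2n} = \tfrac{s}{n}$; the hypothesis $s\in(0,n]$ ensures $p\in(0,1]$, so this is a legitimate parameter. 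Hence $\mathscr{N}\sim\mathrm{Binomial}(n,s/n)$, and since the columns of $\tilde{\mathfrak{M}}$ are identically distributed, the law of $\mathscr{N}$ does not depend on the chosen $j$.

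It then remains to read off the four quantities. Linearity of expectation gives $\E{\mathscr{N}} = np = s$. Independence of the indicators gives $\var{\mathscr{N}} = np(1-p) = s\left(1-\tfrac{s}{n}\right)$. The event $\{\mathscr{N}=0\}$ is the intersection of the $n$ independent events $\{\tilde{Y}_{ij}=0\}$, whence $\prob{\mathscr{N}=0} = (1-p)^{n} = \left(1-\tfrac{s}{n}\right)^{n}$; similarly $\{\mathscr{N}=n\}$ is the intersection of the $n$ independent events $\{\tilde{Y}_{ij}\neq 0\}$, whence $\prob{\mathscr{N}=n} = p^{n} = \left(\tfrac{s}{n}\right)^{n}$.

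There is no genuine obstacle here; the only point worth emphasizing is that the independence of the entries of $\tilde{\mathfrak{M}}$ is exactly what turns the column count into an honestly random Binomial variable, in contrast with the $s$-hashing matrix of Definition~\ref{sHashingDef} whose per-column count is deterministically equal to $s$. Recording this contrast is precisely the purpose of Proposition~\ref{sProposition}.
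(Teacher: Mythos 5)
Your proof is correct and takes essentially the same route as the paper: both identify $\mathscr{N}$ as a $\mathrm{Binomial}(n,s/n)$ random variable from the independence of the entries and then read off the four quantities from the standard Binomial formulas. You merely spell out the sum-of-indicators decomposition more explicitly than the paper does.
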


\begin{proof}
By construction, \scalebox{0.88}{${\bbl \mathscr{N}}\sim Bin(n,\ {\bbl \tilde{p}}=\frac{s}{n})$} is a Binomial random variable with parameters \scalebox{0.88}{$n$} and \scalebox{0.88}{${\bbl \tilde{p}}=\prob{{\bbl \tilde{Y}_{ij}}\neq 0}=\frac{s}{n}$}, which immediately implies that \scalebox{0.88}{$\E{{\bbl \mathscr{N}}} = n{\bbl \tilde{p}}= s$} and \scalebox{0.88}{$\var{{\bbl \mathscr{N}}}=n{\bbl \tilde{p}}(1-{\bbl \tilde{p}})=s\left(1-\frac{s}{n}\right)$}. The other results straightforwardly follow from the distribution of \scalebox{0.88}{${\bbl \mathscr{N}}$} given by $\prob{{\bbl \mathscr{N}}=k}=${\scalebox{0.88}{$\comb{n}{k}$}}$\left(\frac{s}{n}\right)^k\left(1-\frac{s}{n}\right)^{n-k}$, {\bl for all} {\bl \scalebox{0.88}{$k=0,1,\dots,n,$}} and the proof is completed.
\end{proof}
Motivated by Proposition~\ref{sProposition}, we introduce the definition of {\bbl $s$-}hashing-like matrices {\bbl with independent entries and~$s$ expected nonzero elements on each column, as opposed to $s$-hashing matrices with exactly~$s$ nonzeros on each column and whose entries are therefore not independent.}
\begin{definition}\label{sHashingLike}($s$-hashing-like matrix).
Let \scalebox{0.88}{$\Hfrac:=\frac{1}{\sqrt{s}}{\bbl \tilde{\mathfrak{M}}}\in\rnN$}, with \scalebox{0.88}{$s\in (0, n]$} and \scalebox{0.88}{${\bbl \tilde{\mathfrak{M}}}$} defined as in Proposition~\ref{sProposition}. Then $s$ is the expected number of nonzero elements per column of \scalebox{0.88}{$\Hfrac$}, which is therefore called an $s$-hashing-like matrix. 
\end{definition}
\vspace*{-0.35cm}
$ $\\Let \scalebox{0.88}{$\mathcal{Y}_j:=(Y_{1j}, \dots, Y_{nj})^{\top}\in \R^{n\times 1}\ \forall j\in \intbracket{1,N}$}. We note from Proposition~\ref{sProposition} that if the random variables \scalebox{0.88}{$\Yij$} {\bbl defining $\mathfrak{M}$} were independent, then the number \scalebox{0.88}{${\bbl \mathscr{N}}$} of nonzero entries per column of \scalebox{0.88}{$\mathfrak{M}:=[\mathcal{Y}_1\ \cdots\ \mathcal{Y}_N]$} would satisfy \scalebox{0.88}{$\E{{\bbl \mathscr{N}}}=s$}. Motivated by this result, one might wonder whether the $s$-hashing-like matrix \scalebox{0.88}{$\mathfrak{H}:=\frac{1}{\sqrt{s}}{\bbl \tilde{\mathfrak{M}}}=\frac{1}{\sqrt{s}}(\Yij)_{\underset{1\leq j\leq N}{1\leq i\leq n}}$} with independent entries is still a JLT and, if so, for what values of $s$ and $n$. By observing on the one hand that the random variables \scalebox{0.88}{$\Xij:=\frac{1}{\sqrt{\var{\Yij}}}\Yij=\sqrt{\frac{n}{s}}\Yij$} are mean zero and unit variance and on the other hand that \scalebox{0.88}{$\mathfrak{H}$} can be rewritten as \scalebox{0.88}{$\mathfrak{H}=\frac{1}{\sqrt{n}}(\Xij)_{\underset{1\leq j\leq N}{1\leq i\leq n}}$}, the final form of the matrix ensemble analyzed in this manuscript is inspired by that in~\cite[Theorem~3.1]{MatJLT2008Variants} reported below.
\begin{theorem}\label{Matousek31}
	Let \scalebox{0.88}{$N\in \N$}, \scalebox{0.88}{$\upepsilon\in (0, 1/2]$}, \scalebox{0.88}{$\updelta\in (0,1)$}, and let \scalebox{0.88}{$n=\mathbf{C}\upepsilon^{-2}\log(\updelta/2)$}, where \scalebox{0.88}{$\mathbf{C}$} is a suitable constant. Let \scalebox{0.88}{$\mathfrak{T}:\R^N\to\rn$} defined by \scalebox{0.88}{${\mathfrak{T}(\x)}_i=\frac{1}{\sqrt{n}}\sum_{j=1}^N \Rij x_j$}, \scalebox{0.88}{$i=1, \dots, n$}, be a random linear map, where the random variables \scalebox{0.88}{$\Rij$} are independent, satisfying \scalebox{0.88}{$\E{\Rij}=0$} and \scalebox{0.88}{$\var{\Rij}=1$}, and with a uniform sub-Gaussian tail. Then for every \scalebox{0.88}{$\x\in\rN$}, \scalebox{0.88}{$\prob{(1-\upepsilon)\normiiN{\x}\leq \normiin{\mathfrak{T}(\x)} \leq(1+\upepsilon)\normiiN{\x}}\geq 1-\updelta$}.
\end{theorem}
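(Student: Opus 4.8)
The plan is to reduce, by homogeneity, to the case $\normiiN{\x}=1$ (the case $\x=\zerob$ being vacuous), since both $\normiin{\mathfrak{T}(\x)}$ and $\normiiN{\x}$ are homogeneous of degree one in~$\x$. Fix such a unit vector $\x=(x_1,\dots,x_N)^{\top}$ and set $U_i:=\sqrt{n}\,(\mathfrak{T}(\x))_i=\sum_{j=1}^N \Rij x_j$ for $i=1,\dots,n$. Because the $\Rij$ are independent with $\E{\Rij}=0$, $\var{\Rij}=1$, and a common (uniform) sub-Gaussian tail, each $U_i$ is a linear combination of independent mean-zero sub-Gaussian variables; the standard estimate $\normPsiTwo{\sum_j a_j\xi_j}^2\lesssim \sum_j a_j^2\normPsiTwo{\xi_j}^2$ for independent mean-zero $\xi_j$ then shows $U_i$ is sub-Gaussian with a $\psi_2$-parameter at most an absolute multiple of $\bigl(\sum_j x_j^2\bigr)^{1/2}=1$ times the uniform sub-Gaussian parameter of the $\Rij$ --- in particular independent of $N$ and of~$\x$. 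Moreover $\E{U_i^2}=\sum_j x_j^2\var{\Rij}=1$, and the $U_i$ are mutually independent across~$i$ because distinct rows involve disjoint families of the $\Rij$.

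Next I would write $\normiin{\mathfrak{T}(\x)}^2=\frac1n\sum_{i=1}^n U_i^2$ as an average of $n$ independent random variables, each with mean~$1$ and sub-exponential with a $\psi_1$-norm controlled by $\normPsiTwo{U_i}^2$, hence by a dimension-free constant. A Bernstein-type deviation inequality for sums of independent sub-exponential variables --- or, in the spirit of Matou\v{s}ek's use of ``properties of sub-Gaussian variables,'' a direct bound on the moment generating function $\E{\exp(\lambda(U_i^2-1))}$ valid for $\abs{\lambda}$ below a fixed threshold followed by optimization in $\lambda$ --- yields
\begin{equation*}
\prob{\left| \frac1n\sum_{i=1}^n U_i^2 - 1 \right| \ge t} \le 2\exp\!\left(-c\,n\min\{t^2,t\}\right), \qquad t\ge 0,
\end{equation*}
with $c>0$ depending only on the uniform sub-Gaussian parameter of the $\Rij$.

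To conclude, I would pass from this quadratic deviation to the desired multiplicative bound: since $\upepsilon\in(0,1/2]$ we have $(1-\upepsilon)^2\le 1-\upepsilon$ and $(1+\upepsilon)^2\ge 1+\upepsilon$, so, using $\normiiN{\x}=1$, the event $\{\,|\normiin{\mathfrak{T}(\x)}^2-1|<\upepsilon\,\}$ is contained in $\{(1-\upepsilon)\normiiN{\x}\le\normiin{\mathfrak{T}(\x)}\le(1+\upepsilon)\normiiN{\x}\}$. Applying the concentration bound with $t=\upepsilon$ and noting $\min\{\upepsilon^2,\upepsilon\}=\upepsilon^2$, the failure probability is at most $2\exp(-c\,n\,\upepsilon^2)$. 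Choosing $\mathbf{C}\ge 1/c$ then forces $2\exp(-c\,n\,\upepsilon^2)\le\updelta$ whenever $n\ge\mathbf{C}\upepsilon^{-2}\ln(2/\updelta)$, which is the asserted lower bound on~$n$ (reading $\log(\updelta/2)$ in the statement as $\ln(2/\updelta)$).

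The main obstacle is the bookkeeping in the two sub-Gaussian/sub-exponential reductions. First, one must verify that $\normPsiTwo{U_i}$ is genuinely controlled by $\normiiN{\x}$ uniformly in $N$; this is exactly where independence of the $\Rij$ and uniformity of their tails are used, and the argument fails when the column entries are dependent (as for genuine $s$-hashing matrices). Second, one must turn ``$U_i$ is sub-Gaussian'' into a clean moment generating function bound for $U_i^2-1$ on a fixed $\lambda$-interval and optimize to extract an $\exp(-c\,n\,\upepsilon^2)$ tail with an $N$-free constant~$c$; making $c$, and hence $\mathbf{C}$, explicit is the fiddly part. The remaining steps --- the homogeneity reduction, the $(1\pm\upepsilon)^2$ comparison, and the final calibration of $\mathbf{C}$ --- are routine.
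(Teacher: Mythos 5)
Your argument is correct and follows essentially the same route that the paper itself takes: show each coordinate $U_i=\sqrt{n}(\mathfrak{T}(\x))_i$ is sub-Gaussian with a dimension-free $\psi_2$-parameter (the Hoeffding/Khintchine step, cf.\ Propositions~\ref{HoeffdingTypeIneq} and~\ref{KhintchineProposition}), deduce that $U_i^2-1$ is sub-exponential and bound the moment generating function of their normalized sum (Lemma~\ref{expoLambdaU}), then Markov plus optimization in $\lambda$ gives the Gaussian-regime tail (Lemma~\ref{Ukt}), from which the theorem follows after the $(1\pm\upepsilon)^2$ comparison and calibration of the constant. The only caveat is that the paper does not itself prove Theorem~\ref{Matousek31} --- it is quoted from Matou\v{s}ek --- so the comparison is to the paper's closely parallel proof of Theorem~\ref{FirstJLT1}; your reading of $\log(\updelta/2)$ as $\ln(2/\updelta)$ is the correct interpretation of that transcription.
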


Let \scalebox{0.88}{$q\in [2, +\infty)$} and define $\hat{q}=\sqrt{\frac{q}{2}}$ throughout the manuscript. Motivated by Theorem~\ref{Matousek31}, %and by observing that the matrix $\mathfrak{H}=\frac{1}{\sqrt{s}}\mathfrak{M}$ defined above can be rewritten as $\mathfrak{H}=\frac{1}{\sqrt{n}}(\Xij)_{\underset{1\leq j\leq N}{1\leq i\leq n}}$, 
we focus on random matrices (with independent entries) of the form
{\footnotesize{
\begin{equation}\label{HMatrixEnsemble}
\mathfrak{H}=\frac{1}{\sqrt{n}}(\Xij)_{\underset{1\leq j\leq N}{1\leq i\leq n}},\ \mbox{\normalsize with}\ \prob{\Xij=\pm \hat{q}}=\frac{1}{q}\ \mbox{\normalsize and}\ \prob{\Xij=0}=1-\frac{2}{q},\ \text{\normalsize or equivalently}
\end{equation}
}}
%or equivalently
{\footnotesize{
\begin{equation}\label{HMatrixEnsemble2}
\mathfrak{H}=\frac{\hat{q}}{\sqrt{n}}(\Yij)_{\underset{1\leq j\leq N}{1\leq i\leq n}},\ \mbox{\normalsize with}\ \prob{\Yij=\pm 1}=\frac{1}{q}\ \mbox{\normalsize and}\ \prob{\Yij=0}=1-\frac{2}{q},
\end{equation}
}}
such  that the special case $\frac{1}{q}=\frac{s}{2n}$ leads to the $s$-hashing-like matrix ensemble
{\footnotesize{
\begin{equation}\label{HashlikeEnsemble}
\mathfrak{H}=\frac{1}{\sqrt{s}}(\Yij)_{\underset{1\leq j\leq N}{1\leq i\leq n}},\ \mbox{where the}\ \Yij\ \mbox{are distributed as}\ Y\ \mbox{in Proposition~\ref{sProposition}}.
\end{equation}
}}

Let \scalebox{0.88}{$\dirac{\nu}$} denote the Dirac measure~\cite[p.~11]{tao2012topicsInRMT} centered on \scalebox{0.88}{$\nu$}.
For ease of the presentation, the following distributions are introduced:
{\footnotesize{
\begin{equation}\label{muXqmuYq}
\bbmu_X(q):=\frac{1}{q}\dirac{-\hat{q}}+\left(1-\frac{2}{q}\right)\dirac{0}+\frac{1}{q}\dirac{\hat{q}}\ \mbox{\normalsize and}\  \bbmu_Y(q):=\frac{1}{q}\dirac{-1}+\left(1-\frac{2}{q}\right)\dirac{0}+\frac{1}{q}\dirac{1}\ \mbox{\normalsize for all}\ q\geq 2,
\end{equation}
}}
so that \scalebox{0.88}{$X\sim \bbmu_X(q)$} if and only if \scalebox{0.88}{$Y:=X/\hat{q}\sim \bbmu_Y(q)$}.

\subsection{Sub-Gaussian variables and sparse Johnson--Lindenstrauss transforms}
In this section we derive sufficient conditions on {\bl \scalebox{0.88}{$q$}}, $s$ and $n$ so that {\bl the matrices given by~\eqref{HMatrixEnsemble} and~\eqref{HashlikeEnsemble}} are JLTs, using properties of sub-Gaussian variables, motivated by Theorem~\ref{Matousek31}. Indeed, even though the latter result applies trivially to the matrix ensemble{\bl s~\eqref{HMatrixEnsemble} and}~\eqref{HashlikeEnsemble} given that {\bl \scalebox{0.88}{$\Xij$} and }\scalebox{0.88}{$\sqrt{\frac{n}{s}}\Yij$} are bounded and hence sub-Gaussian~\cite[Example~2.5.8-(iii)]{vershynin2018HDP}, it does not show, e.g., how $n$ or the constant~\scalebox{0.88}{$\mathbf{C}$} depends on $s$ {\bl and \scalebox{0.88}{$q$}}. 
%In other words, Theorem~\ref{Matousek31} does not show how $s$ can be chosen so that $s$-hashing-like matrices are JLTs. 
%Thus, unlike many existing results in random matrix theory, 
Thus, the {\bl present work} particularly focuses on {\bl explicitly expressing} all {\bl non absolute} constants\footnote{\bbl By {\it absolute constants} throughout the manuscript, we mean constants that do not depend on any parameter ($\epsilon, \delta, q, s$, etc.) or variable involved in the analysis, that is, fixed real numbers that remain the same regardless of the values of such parameters.} {\bl (e.g.,  \scalebox{0.88}{$\mathbf{C}$})} in terms of $q$, $n$, and~$s$.

The remainder of this section is devoted to proving the following results.

\begin{theorem}\label{FirstJLT1}
	Let $\epsilon\in (0, 1/2]$, $\delta\in (0,1)$, {\bl and $q\geq 2$ be fixed. There exists an absolute constant $C_1>0$ such that for any $n\geq \frac{C_1 q^2}{\left(\ln\left(\frac{q}{2}+1\right)\right)^2}\epsilon^{-2}\ln(2/\delta)$, the random matrix $\mathfrak{H}\in \rnN$ given by~\eqref{HMatrixEnsemble} satisfies~\eqref{JLTBound1} for all $\x\in \R^N$.}
\end{theorem}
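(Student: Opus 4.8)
The plan is to reprove, with explicit dependence on $q$, the reduction underlying Theorem~\ref{Matousek31}. Fix $\x\in\R^N$; since~\eqref{JLTBound1} is invariant under scaling of $\x$ and is trivial when $\x=\zerob$, I may assume $\normiiN{\x}=1$. Set $Z_i:=\sqrt{n}\,(\Hfrac\x)_i=\sum_{j=1}^{N}\Xij x_j$ for $i\in\intbracket{1,n}$. Because the $n$ rows of $\Hfrac$ are built from pairwise disjoint (hence independent) families of entries, $Z_1,\dots,Z_n$ are i.i.d.; each $\Xij$ has $\E{\Xij}=0$ and $\E{\Xij^2}=\hat{q}^2\cdot\tfrac{2}{q}=1$, so $\E{Z_i}=0$ and $\E{Z_i^2}=\normiiN{\x}^2=1$, and $\normiin{\Hfrac\x}^2=\tfrac1n\sum_{i=1}^n Z_i^2$ has mean $1$. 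Since $\{\,\bigl|\normiin{\Hfrac\x}^2-1\bigr|\le\epsilon\,\}\subseteq\{(1-\epsilon)^2\le\normiin{\Hfrac\x}^2\le(1+\epsilon)^2\}$ for $\epsilon\in(0,1]$, it suffices to find an absolute constant $C_1$ so that $n\ge \frac{C_1 q^2}{(\ln(q/2+1))^2}\epsilon^{-2}\ln(2/\delta)$ forces $\prob{\bigl|\tfrac1n\sum_{i=1}^n(Z_i^2-1)\bigr|>\epsilon}\le\delta$.

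The decisive step---and the only place where $q$ enters beyond absolute constants---is an exact evaluation of the sub-Gaussian norm of a single entry. Using $\normPsiTwo{\Xij}=\inf\{t>0:\E{\exp(\Xij^2/t^2)}\le 2\}$ together with the law of $\Xij$ from~\eqref{HMatrixEnsemble}, I would compute $\E{\exp(\Xij^2/t^2)}=1-\tfrac2q+\tfrac2q\exp\!\bigl(\tfrac{q}{2t^2}\bigr)$, whence the threshold $2$ is met exactly at $t^2=\tfrac{q}{2\ln(q/2+1)}$ (well defined since $q\ge2$); thus $\normPsiTwo{\Xij}^2=K_q^2:=\tfrac{q}{2\ln(q/2+1)}$, and one checks $K_q^2\ge 1/\ln2>1$ for $q\ge 2$. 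This is the improvement over the lossy bound $\normPsiTwo{\Xij}^2\le\hat{q}^2=q/2$ and is exactly what produces the $\ln(q/2+1)$ in the denominator. By the standard estimate for weighted sums of independent, mean-zero sub-Gaussian variables (e.g.~\cite[Prop.~2.6.1]{vershynin2018HDP}), $Z_i$ is sub-Gaussian with $\normPsiTwo{Z_i}^2\le C_0 K_q^2\sum_{j=1}^N x_j^2=C_0 K_q^2$ for an absolute $C_0$. Hence $Z_i^2-1$ is a centered sub-exponential variable with $\psi_1$-norm $L_q:=\|Z_i^2-1\|_{\psi_1}\le c_0' K_q^2$ ($c_0'$ absolute, using $\|Z_i^2\|_{\psi_1}=\normPsiTwo{Z_i}^2$ and $\|V-\E{V}\|_{\psi_1}\le 2\|V\|_{\psi_1}$), and $Z_1^2-1,\dots,Z_n^2-1$ are i.i.d.

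It then remains to apply Bernstein's inequality for i.i.d.\ sub-exponential variables (e.g.~\cite[Cor.~2.8.3]{vershynin2018HDP}):
\[
\prob{\left|\frac1n\sum_{i=1}^{n}(Z_i^2-1)\right|>\epsilon}\;\le\;2\exp\!\left(-c_1 n\,\min\!\left(\frac{\epsilon^2}{L_q^2},\frac{\epsilon}{L_q}\right)\right).
\]
Since $\epsilon\le\tfrac12$ and $L_q$ is bounded below by an absolute constant (as $K_q^2\ge1/\ln2$), the minimum equals $\epsilon^2/L_q^2$; using $L_q\le c_0' K_q^2$, the right-hand side is at most $2\exp(-c_2\epsilon^2 n/K_q^4)$, which is $\le\delta$ precisely when $n\ge \frac{K_q^4}{c_2\epsilon^2}\ln(2/\delta)$. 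Since $K_q^4=\frac{q^2}{4(\ln(q/2+1))^2}$, this is an inequality of the claimed form, and collecting $C_0,c_0',c_1,c_2$ into one number fixes the absolute constant $C_1$; unwinding through $(1\pm\epsilon)^2$ then yields~\eqref{JLTBound1} for every $\x\in\R^N$. The main obstacle is the first computation: isolating the factor $\ln(q/2+1)$ in $\normPsiTwo{\Xij}$ rather than accepting the crude $\hat{q}$. Everything downstream is the textbook Bernstein route to a JL bound, where the only real work is bookkeeping---verifying that $C_0$, the $\psi_1$-versus-$\psi_2$ comparison constant, and the Bernstein constant are all absolute, so that $C_1$ is absolute as asserted.
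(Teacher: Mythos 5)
Your argument is essentially the paper's: the exact evaluation $\normPsiTwo{\Xij}^2 = q/(2\ln(q/2+1))$ is Proposition~\ref{Yproporties}-\emph{(i)} (stated there for $Y = X/\hat q$), and your Bernstein route through the sub-exponential variables $Z_i^2-1$ reproduces the content of Lemma~\ref{expoLambdaU} and Lemma~\ref{Ukt}, which the paper derives by hand following Matou\v sek while you cite the textbook versions from~\cite{vershynin2018HDP}. One small slip worth flagging: to conclude that the minimum in Bernstein's inequality equals $\epsilon^2/L_q^2$, you assert $L_q$ is bounded below by an absolute constant ``as $K_q^2\ge 1/\ln 2$,'' but the only estimate you have, $L_q\le c_0'K_q^2$, runs the wrong way for that inference, and $L_q$ can in fact vanish: for $q=2$ and $\x=\bm{e_1}$ one has $Z_i\equiv\pm 1$, so $Z_i^2-1\equiv 0$. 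The repair is routine — either feed $\max\{L_q,1\}$ into Bernstein (any upper bound on the $\psi_1$-norm is admissible, and $\max\{L_q,1\}\le\max\{c_0',1\}K_q^2$ since $K_q^2>1$), or note that when $L_q<\epsilon$ the exponent $c_1 n\,\epsilon/L_q$ exceeds $c_1 n$ and gives an even stronger bound — so $C_1$ remains absolute and the conclusion is unaffected.
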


\begin{theorem}\label{sTheorem}
	%Let $\epsilon\in (0, 1/2]$,  $\delta\in (0, 1)$, $n\in \N$ and $0<s\leq n$. Assume that the sparsity parameter~$s$ satisfies\footnote{While some of the constants appearing in the main results are very large, the numerical experiments reported in Section~\ref{sec4} suggest significantly smaller values. The remark is similar for very small theorized constants.}
{\bl Let $\epsilon\in (0, 1/2]$ {\bl and}  $\delta\in (0, 1)$. There exists an absolute constant $C_1'>0$ such that if $s$ satisfies,
{\footnotesize{
\begin{equation}\label{sCondition}
s^2\geq {\bl C_1'}\epsilon^{-2}\frac{n\ln(2/\delta)}{\left(\ln\left(\frac{n}{s}+1\right)\right)^2},
\end{equation}
}}
then the $s$-hashing-like matrix $\mathfrak{H}\in \rnN$ of~\eqref{HashlikeEnsemble} satisfies~\eqref{JLTBound1} for all $\x\in \R^N$.
}
\end{theorem}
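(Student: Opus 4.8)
The plan is to obtain Theorem~\ref{sTheorem} as an immediate consequence of Theorem~\ref{FirstJLT1} via the change of variables $q = 2n/s$. First I would observe that the $s$-hashing-like ensemble~\eqref{HashlikeEnsemble} is literally the ensemble~\eqref{HMatrixEnsemble} with this choice of $q$: since $\hat q = \sqrt{q/2} = \sqrt{n/s}$, one has $\frac{1}{\sqrt n}\hat q = \frac{1}{\sqrt s}$ and $\frac{1}{q} = \frac{s}{2n}$, so the entries $\frac{1}{\sqrt n}\Xij$ of~\eqref{HMatrixEnsemble} are exactly the entries $\frac{1}{\sqrt s}\Yij$ of~\eqref{HashlikeEnsemble}, with $Y$ as in Proposition~\ref{sProposition}. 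Because $s\in(0,n]$, the value $q = 2n/s$ lies in $[2,\infty)$, so it is an admissible parameter for Theorem~\ref{FirstJLT1}.

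Next I would rewrite the hypothesis of Theorem~\ref{FirstJLT1} under this substitution. Let $C_1$ be the absolute constant produced by Theorem~\ref{FirstJLT1}; being absolute, it does not vary with $q$, which is exactly what makes the substitution legitimate even though $q = 2n/s$ depends on the dimensions. With $q^2 = 4n^2/s^2$ and $q/2+1 = n/s+1$, the threshold $n \geq \frac{C_1 q^2}{(\ln(q/2+1))^2}\epsilon^{-2}\ln(2/\delta)$ becomes $n \geq \frac{4C_1 n^2}{s^2(\ln(n/s+1))^2}\epsilon^{-2}\ln(2/\delta)$; dividing by $n$ and multiplying by $s^2$ turns this into $s^2 \geq 4C_1\,\epsilon^{-2}\,\frac{n\ln(2/\delta)}{(\ln(n/s+1))^2}$, which is exactly~\eqref{sCondition} with $C_1' := 4C_1$. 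Hence, whenever $s$ satisfies~\eqref{sCondition}, the matrix $\mathfrak{H}$ of~\eqref{HashlikeEnsemble} coincides with the matrix of~\eqref{HMatrixEnsemble} for $q=2n/s$ and the dimension threshold of Theorem~\ref{FirstJLT1} is met, so~\eqref{JLTBound1} holds for every $\x\in\R^N$.

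The argument has essentially no obstacle of its own: all the analytic content — the sub-Gaussian moment and Bernstein-type concentration estimates, and the explicit tracking of how the embedding dimension depends on $q$ — was already done in Theorem~\ref{FirstJLT1}, and Theorem~\ref{sTheorem} is a change of variables plus the elementary rearrangement above. The only point requiring care is precisely that $q = 2n/s$ is not a fixed parameter but a ratio of dimensions, so the reduction goes through only because the constant in Theorem~\ref{FirstJLT1} is genuinely uniform in $q$; were this not the case, the rearrangement into~\eqref{sCondition} would not close. If one instead preferred a self-contained proof, one would rerun the proof of Theorem~\ref{FirstJLT1} with $\Xij := \sqrt{n/s}\,\Yij$ — which are mean zero, unit variance, and bounded by $\sqrt{n/s}$, hence uniformly sub-Gaussian — and simply carry the factor $n/s$ through the concentration bound, which is what ultimately produces the $s^2$ on the left-hand side of~\eqref{sCondition}.
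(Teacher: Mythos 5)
Your proposal is correct and follows exactly the paper's route: substitute $q=2n/s$ into Theorem~\ref{FirstJLT1}, note that~\eqref{HashlikeEnsemble} is~\eqref{HMatrixEnsemble} for this $q$, and rearrange the dimension threshold of Theorem~\ref{FirstJLT1} into~\eqref{sCondition} with $C_1'=4C_1$. The one point you emphasize that the paper leaves implicit — that the reduction is legitimate only because the absolute constant $C_1$ is uniform in $q$, since here $q$ is itself a ratio of dimensions — is a genuine and worthwhile clarification.
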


Since $s$ is naturally at most equal to $n$, how~\eqref{sCondition} can be satisfied is not obvious. A demonstration of this satisfiability is therefore provided in the following remark.

\begin{remark}
	Choose \scalebox{0.88}{$\epsilon=\delta=\frac{1}{2}$} in Theorem~\ref{sTheorem}, and assume that \scalebox{0.88}{$s=\upkappa n$} for some \scalebox{0.88}{$\upkappa\in (0, 1]$}, where \scalebox{0.88}{$n\geq\uptau:=\frac{{\bl 4C_1'}\ln 4}{(\ln 2)^2}$}.
	Observe that \scalebox{0.88}{$\ln\left(\frac{n}{s}+1\right)\geq \ln 2$}. Then for~\eqref{sCondition} to hold, it is sufficient that \scalebox{0.88}{$s^2=\upkappa^2 n^2\geq \uptau n$}, which is the case provided \scalebox{0.88}{$1\geq \upkappa\geq \sqrt{\frac{\uptau}{n}}\in (0, 1]$}.
	
	More generally, define \scalebox{0.88}{$\uptau(\epsilon, \delta):={\bl C_1'}\epsilon^{-2}\frac{\ln(2/\delta)}{(\ln 2)^2}$} for fixed \scalebox{0.88}{$\epsilon\in (0,1/2]$} and \scalebox{0.88}{$\delta\in (0,1)$}, and choose \scalebox{0.88}{$n\geq \uptau(\epsilon, \delta)$}. Then as above, for~\eqref{sCondition} to hold with \scalebox{0.88}{$s=\upkappa(\epsilon, \delta) n$}, it is sufficient that \scalebox{0.88}{$1\geq \upkappa(\epsilon, \delta)\geq \sqrt{\frac{\uptau(\epsilon, \delta)}{n}}=\frac{{\bl 1}}{\epsilon\ln 2}\sqrt{\frac{{\bl C_1'}\ln (2/\delta)}{n}}\in (0,1]$}.
\end{remark}

The proof of Theorem~\ref{FirstJLT1} follows that of Theorem~\ref{Matousek31}, that is, ~\cite[Theorem~3.1]{MatJLT2008Variants}, and therefore requires the key result of Lemma~\ref{expoLambdaU} below. The main idea is to focus on the expression of \scalebox{0.88}{$\Hfrac$} given by~\eqref{HMatrixEnsemble}, in order to avoid that the distribution of the \scalebox{0.88}{$\Xij$} depends on both  $n$ and $s$, but only on a fixed \scalebox{0.88}{$q\geq 2$}, thus simplifying the analysis, leading to Theorem~\ref{FirstJLT1}. Then the result of Theorem~\ref{sTheorem} will follow as a corollary of Theorem~\ref{FirstJLT1} in the particular case where \scalebox{0.88}{$q = \frac{2n}{s}$}.  Since~\eqref{HMatrixEnsemble},~\eqref{HMatrixEnsemble2} and~\eqref{HashlikeEnsemble} can all be expressed in terms of \scalebox{0.88}{$\Yij$}, the analysis below {\bl therefore} focuses on properties of \scalebox{0.88}{$\Yij$}, the most basic being given by Proposition~\ref{Yproporties}.

Next is introduced the important class of sub-Gaussian random variables with strong tail decay properties~\cite[Section~3]{Rudelson2014LectNotes}: a class containing bounded variables as well as normal variables.
\begin{definition}
Let \scalebox{0.88}{$\mathfrak{v}>0$}. A random variable \scalebox{0.88}{$S$} is called \scalebox{0.88}{$\mathfrak{v}$}-sub-Gaussian if there exists a constant \scalebox{0.88}{$\mathfrak{C}$} such that for any \scalebox{0.88}{$t\geq 0$}, \scalebox{0.88}{$\prob{\abs{S}>t}\leq \mathfrak{C}e^{-\mathfrak{v} t^2}$}. If \scalebox{0.88}{$\mathfrak{v}$} is an absolute constant, then a \scalebox{0.88}{$\mathfrak{v}$}-sub-Gaussian variable is simply called sub-Gaussian.
\end{definition}

For any sub-Gaussian random variable \scalebox{0.88}{$S$}, the following {\it sub-Gaussian norm}~\cite[Definition~2.5.6]{vershynin2018HDP} will be considered throughout: \scalebox{0.88}{$\normPsiTwo{S}=\inf\accolade{t>0:\E{\exp\left(S^2/t^2\right)}\leq 2}.$}
The following properties of \scalebox{0.88}{$Y\sim\bbmu_Y(q)$} defined by \eqref{muXqmuYq}, will be useful to derive the results of Propositions~\ref{HoeffdingTypeIneq} and~\ref{KhintchineProposition}, which will be needed to prove Lemma~\ref{expoLambdaU}.
\begin{proposition}\label{Yproporties}
The random variable \scalebox{0.88}{$Y\sim \bbmu_Y(q)$} defined by \eqref{muXqmuYq} satisfies the following properties for all \scalebox{0.88}{$q\geq 2$}, {\bl with absolute constants \scalebox{0.88}{$C_2>1$} and \scalebox{0.88}{$C_3>1$} (independent of $q$)}:
\begin{itemize}
\item[(i)] \scalebox{0.88}{$\normPsiTwo{Y}=:K_1=\frac{1}{\sqrt{\ln\left(\frac{q}{2}+1\right)}}$}.
\item[(ii)] \scalebox{0.88}{$\prob{\abs{Y}\geq t}\leq 2\exp\left(-t^2/K_1^2\right)\ $} for all \scalebox{0.88}{$t\geq 0$}.
\item[(iii)] \scalebox{0.88}{$\left(\E{\abs{Y}^p}\right)^{1/p}\leq K_2\sqrt{p}\ $} for all \scalebox{0.88}{$p\geq 1$}, where \scalebox{0.88}{$K_2 = {\bl C_2K_1}$}.
\item[(iv)] \scalebox{0.88}{$\E{e^{\lambda^2 Y^2}}\leq e^{K_3^2 \lambda^2}\ $} \scalebox{0.88}{$\forall\abs{\lambda}\leq\frac{1}{K_3}$},  and \scalebox{0.88}{$\ \E{e^{\lambda Y}}\leq e^{K_3^2 \lambda^2}\ $} \scalebox{0.88}{$\forall\lambda\in \R$}, where \scalebox{0.88}{$K_3 = {\bl C_3K_2}$}.
\end{itemize}
\end{proposition}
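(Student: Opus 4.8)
The plan is to pin down the exact value of $\|Y\|_{\psi_2}$ in part~(i) by a direct computation, and then bootstrap (ii)--(iv) from it, taking care that every multiplicative constant stays absolute. For~(i), observe that $Y^2$ takes only the values $0$ and $1$, with $\mathbb{P}(Y^2=1)=2/q$ and $\mathbb{P}(Y^2=0)=1-2/q$; hence for every $t>0$,
$$\mathbb{E}\!\left[\exp\!\left(Y^2/t^2\right)\right]=1-\tfrac{2}{q}+\tfrac{2}{q}\,e^{1/t^2}=1+\tfrac{2}{q}\!\left(e^{1/t^2}-1\right).$$
This expression is continuous and strictly decreasing in $t$ on $(0,\infty)$, and $\mathbb{E}[\exp(Y^2/t^2)]\le 2$ holds precisely when $e^{1/t^2}\le q/2+1$, i.e.\ when $t\ge 1/\sqrt{\ln(q/2+1)}$ (meaningful since $q\ge 2$ gives $\ln(q/2+1)\ge\ln 2>0$). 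Therefore the infimum defining $\|\cdot\|_{\psi_2}$ is attained and equals $K_1=1/\sqrt{\ln(q/2+1)}$, which is~(i); in passing this shows $\mathbb{E}[\exp(Y^2/K_1^2)]=2$ exactly, a fact I will reuse immediately.

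For~(ii), I would apply Markov's inequality to the nonnegative variable $\exp(Y^2/K_1^2)$: for every $t\ge 0$,
$$\mathbb{P}(|Y|\ge t)=\mathbb{P}\!\left(\exp\!\left(Y^2/K_1^2\right)\ge \exp\!\left(t^2/K_1^2\right)\right)\le e^{-t^2/K_1^2}\,\mathbb{E}\!\left[\exp\!\left(Y^2/K_1^2\right)\right]=2\,e^{-t^2/K_1^2}.$$
For~(iii) and~(iv) I would invoke the standard equivalence of sub-Gaussian characterizations (e.g.\ \cite[Proposition~2.5.2]{vershynin2018HDP}): once $\|Y\|_{\psi_2}=K_1$ is known and we use $\mathbb{E}Y=0$, there are absolute constants such that $(\mathbb{E}|Y|^p)^{1/p}\le C_2 K_1\sqrt p$ for all $p\ge 1$, $\mathbb{E}[e^{\lambda^2 Y^2}]\le e^{(C_3 C_2 K_1)^2\lambda^2}$ for $|\lambda|\le 1/(C_3 C_2 K_1)$, and $\mathbb{E}[e^{\lambda Y}]\le e^{(C_3 C_2 K_1)^2\lambda^2}$ for all $\lambda$; enlarging the constants if needed so that $C_2>1$ and $C_3>1$, and setting $K_2=C_2 K_1$, $K_3=C_3 K_2$, gives (iii) and (iv). Alternatively, since $|Y|^p=|Y|$ for $p\ge 1$ one has $\mathbb{E}|Y|^p=2/q$ and $\mathbb{E}[e^{\lambda^2 Y^2}]=1+\tfrac{2}{q}(e^{\lambda^2}-1)$, $\mathbb{E}[e^{\lambda Y}]=1+\tfrac{2}{q}(\cosh\lambda-1)$, and both bounds can be checked directly from elementary inequalities for $e^x-1$ and $\ln(1+x)$.

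The one genuinely delicate point is keeping every constant \emph{absolute}, i.e.\ independent of~$q$. This is not automatic, because $K_1=1/\sqrt{\ln(q/2+1)}\to 0$ as $q\to\infty$, so the bounds in (iii)--(iv) are strictly stronger than the generic Hoeffding estimate $\mathbb{E}[e^{\lambda Y}]\le e^{\lambda^2/2}$; they genuinely exploit the small variance $\mathrm{Var}(Y)=2/q$. The resolution is that this sharpness is already encoded in the exact value $\|Y\|_{\psi_2}=K_1$ obtained in~(i), and the standard equivalences transfer it to the other characterizations with only $q$-independent multiplicative losses. So I expect the main obstacle to be getting~(i) exactly right; after that (ii)--(iv) are routine modulo the bookkeeping of absolute constants (and, in the direct-computation route, verifying that an estimate such as $1+\tfrac{2}{q}(e^{\lambda^2}-1)\le e^{K_3^2\lambda^2}$ holds for a $q$-independent choice of $K_3$, which reduces to a monotonicity argument using $\ln(1+x)\le x$ together with $K_3^2 q\ge(C_3C_2)^2/(\tfrac1q\ln(\tfrac q2+1))\ge(C_3C_2)^2$).
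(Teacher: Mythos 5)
Your proof is correct and follows the same overall plan as the paper's (determine $\normPsiTwo{Y}$ exactly, then bootstrap (ii)--(iv) from it), but two of your individual steps take a different tack. For~(ii), you apply Markov's inequality to the random variable $\exp\left(Y^2/K_1^2\right)$ and exploit the fact, extracted from~(i), that $\E{\exp\left(Y^2/K_1^2\right)}=2$ holds with equality; the paper instead computes the tail of $|Y|$ directly, noting $\prob{\abs{Y}\geq t}=\mathds{1}_{\accolade{t=0}}+\frac{2}{q}\mathds{1}_{\accolade{0<t\leq 1}}$, and checks pointwise that this is bounded by $2\left(\frac{q}{2}+1\right)^{-t^2}$ (which requires $q\geq 2$ on the interval $0<t\leq 1$). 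Both routes yield exactly the same unit absolute constant, but yours is shorter and avoids casework. For~(iii) and~(iv), you invoke the standard sub-Gaussian equivalences from Vershynin's Proposition~2.5.2 as a black box; the paper reruns essentially that same proof in order to expose explicit numerical values (the integral identity, the bound $\Gamma(x)\leq 3x^x$ giving $K_2=\frac{3}{\sqrt{2}}K_1$, and the Taylor-expansion argument giving $K_3=2\sqrt{e}\,K_2$). Both are valid since Vershynin's equivalences do hold with absolute constants, and your remark that the $q$-dependence is already fully carried by the exact value of $K_1$ --- so that no hidden $q$-dependence can leak into $C_2,C_3$ --- is precisely the right point to flag.
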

\begin{proof}
To prove~{\it (i)}, we first observe that the equality $\normPsiTwo{Y}=\frac{1}{\sqrt{\ln\left(\frac{q}{2}+1\right)}}$ follows from the fact that $\E{\exp\left(Y^2/t^2\right)}-2 = \frac{2}{q}e^{1/t^2}+\left(1-\frac{2}{q}\right)-2\leq 0\ \iff\ t^2\geq \frac{1}{\ln\left(\frac{q}{2}+1\right)}.$
	
The proof of~{\it (ii)} uses the fact that $Y$ is sub-Gaussian and hence satisfies~(see, e.g.,~\cite[Eq.~(2.14)]{vershynin2018HDP}) for some constant $c_1>0$, 
{\footnotesize{
\begin{equation}\label{boundSubGauss}
\prob{\abs{Y}\geq t}\leq 2\exp\left(-c_1t^2/\normPsiTwo{Y}^2\right)=2\left(\frac{q}{2}+1\right)^{-c_1t^2} \ \mbox{\normalsize for all}\ t\geq 0.
\end{equation}
}}Simple calculations using \scalebox{0.88}{$\bbmu_Y(q)$} yield \scalebox{0.88}{$\prob{\abs{Y}\geq t}=\mathds{1}_{\accolade{t\, =\, 0}} + \frac{2}{q}\mathds{1}_{\accolade{0<t\leq 1}}\leq 2\left(\frac{q}{2}+1\right)^{-t^2}$}
for all \scalebox{0.88}{$t\geq 0$}, which shows that~\eqref{boundSubGauss} holds for \scalebox{0.88}{$c_1=1$}, leading to~{\it (ii)}.
	
The proofs of the other results use ideas derived from~\cite[Proof of Proposition~2.5.2]{vershynin2018HDP}. To prove~{\it (iii)}, let \scalebox{0.88}{$Z:=Y/K_1$}. It follows from~{\it (ii)} that \scalebox{0.88}{$\prob{\abs{Z}\geq t}\leq 2e^{-t^2}$}. By the integral identity~\cite[Lemma~1.2.1]{vershynin2018HDP} with the change of variable $u=t^p$, and the inequality \scalebox{0.88}{$\Gamma(x)\leq 3x^x$} for all \scalebox{0.88}{$x\geq 1/2$}, where \scalebox{0.88}{$\Gamma$} denotes the {\it Gamma function},
{\footnotesize{
\begin{eqnarray*}
\E{\abs{Z}^p}&=&\int_{0}^{\infty}\prob{\abs{Z}^p\geq u}du=\int_{0}^{\infty}\prob{\abs{Z}\geq t}pt^{p-1}dt\\
&\leq&\int_{0}^{\infty}2e^{-t^2}pt^{p-1}dt=p\Gamma(p/2)\leq 3p(p/2)^{p/2},
\end{eqnarray*}
}}which implies \scalebox{0.88}{$\left(\E{\abs{Z}^p}\right)^{1/p}\leq (3p)^{1/p}\cdot\sqrt{\cfrac{p}{2}}\leq \cfrac{3}{\sqrt{2}}\sqrt{p}$}, and hence {\bl \scalebox{0.88}{$\left(\E{\abs{Y}^p}\right)^{1/p}\leq \cfrac{3 K_1}{\sqrt{2}}\sqrt{p}$}}.

To prove the first inequality in~{\it (iv)}, we observe that~{\it (iii)} applied to \scalebox{0.88}{$Y=:K_2W$} leads to \scalebox{0.88}{$\E{W^{2p}}\leq (2p)^p$}, which allows us to derive in the proof of~\cite[Proposition~2.5.2]{vershynin2018HDP} that \scalebox{0.88}{$\E{e^{\lambda^2W^2}}=1+\sum_{p=1}^{\infty}\frac{\lambda^{2p}\E{W^{2p}}}{p!}\leq e^{4e\lambda^2}$}, \scalebox{0.88}{$\forall\abs{\lambda}\leq \frac{1}{2\sqrt{e}}.$} Hence, \scalebox{0.88}{$\E{e^{\lambda^2Y^2}}\leq e^{4eK_2^2\lambda^2}=e^{K_3^2\lambda^2}$} for all \scalebox{0.88}{$\abs{\lambda}{\bl \leq 1/2K_2\sqrt{e}}$}.

By applying the first inequality in~{\it (iv)} to \scalebox{0.88}{$Y=:K_3U$}, it holds that \scalebox{0.88}{$\E{e^{\lambda^2U^2}}\leq e^{\lambda^2}$} for all \scalebox{0.88}{$\abs{\lambda}\leq 1$}. Then the inequality \scalebox{0.88}{$e^x\leq x+e^{x^2}$}, together with the fact that \scalebox{0.88}{$\E{U}=0$}, yields \scalebox{0.88}{$\E{e^{\lambda U}}\leq e^{\lambda^2}$} for all \scalebox{0.88}{$\abs{\lambda}\leq 1$}. When \scalebox{0.88}{$\abs{\lambda}\geq 1$},  it follows from the inequalities \scalebox{0.88}{$\lambda x\leq \frac{\lambda^2}{2}+\frac{x^2}{2}$} for all \scalebox{0.88}{$\lambda$} and \scalebox{0.88}{$x$} and \scalebox{0.88}{$\E{e^{U^2/2}}\leq e^{1/2}$} that \scalebox{0.88}{$\E{e^{\lambda U}}\leq e^{\lambda^2/2}\E{e^{U^2/2}}\leq e^{\lambda^2/2}e^{1/2}\leq e^{\lambda^2}$}. Consequently, \scalebox{0.88}{$\E{e^{\lambda U}}\leq e^{\lambda^2}$} and hence \scalebox{0.88}{$\E{e^{\lambda Y}}\leq e^{K_3^2 \lambda^2}$}, for all \scalebox{0.88}{$\lambda\in\R$}.
\end{proof}

The following result is a particular case of the more general large deviation inequality~\cite[Theorem~3.3]{Rudelson2014LectNotes} (see also~\cite[Theorem~2.6.3]{vershynin2018HDP}) for linear combinations of independent sub-Gaussian random variables that states that a linear combination of independent sub-Gaussian random variables is sub-Gaussian. It will be used not only to prove Proposition~\ref{KhintchineProposition} but also to estimate the largest singular value of hashing-like matrices via Theorem~\ref{LargestSingResult}.
\begin{proposition}\label{HoeffdingTypeIneq}(Hoeffding-type inequality).
Let \scalebox{0.88}{$Y_{\ell},\ \ell\in \intbracket{1,m}$}, be independent copies of \scalebox{0.88}{$\ Y\sim \bbmu_Y(q)$}. Consider the deterministic vector \scalebox{0.88}{$\ab:=(a_1, \dots, a_m)^{\top}\in \mathbb{S}^{m-1}$}. Then {\bl there exists an absolute constant \scalebox{0.88}{$c_{2,3}\in (0,1)$} such that}
{\footnotesize{
\begin{equation}\label{HoeffIneq1}
\prob{\abs{\sum_{\ell=1}^{m}a_\ell Y_\ell}\geq t}\leq 2e^{-c_2t^2}\quad\text{\normalsize for all}\ t\geq 0, \ \mbox{\normalsize where}\ {\bl c_2=\left(\frac{c_{2,3}}{K_1}\right)^2}.
\end{equation}
}}
\end{proposition}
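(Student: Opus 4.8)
The plan is to run the standard Chernoff (exponential-moment) argument, which is tailor-made for sums of independent sub-Gaussian variables, and then to keep careful track of the constants so that the final rate is expressed through $K_1$ alone. First I would invoke the second inequality in Proposition~\ref{Yproporties}(iv), namely $\E{e^{\lambda Y}}\leq e^{K_3^2\lambda^2}$ for every $\lambda\in\R$. Since the $Y_\ell$ are independent copies of $Y$, substituting $\lambda a_\ell$ for $\lambda$ (legitimate because the bound holds for all real arguments, with no side condition) and multiplying over $\ell$ gives
\[
\E{\exp\left(\lambda\sum_{\ell=1}^m a_\ell Y_\ell\right)}=\prod_{\ell=1}^m\E{e^{\lambda a_\ell Y_\ell}}\leq\prod_{\ell=1}^m e^{K_3^2\lambda^2 a_\ell^2}=e^{K_3^2\lambda^2\sum_{\ell=1}^m a_\ell^2}=e^{K_3^2\lambda^2},
\]
where the last equality uses $\ab\in\mathbb{S}^{m-1}$.

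Next I would apply Markov's inequality to $e^{\lambda\sum_\ell a_\ell Y_\ell}$ with $\lambda>0$: this yields $\prob{\sum_{\ell=1}^m a_\ell Y_\ell\geq t}\leq e^{-\lambda t}\,\E{e^{\lambda\sum_\ell a_\ell Y_\ell}}\leq e^{-\lambda t+K_3^2\lambda^2}$, and minimizing the exponent over $\lambda>0$ at $\lambda=t/(2K_3^2)$ gives $\prob{\sum_{\ell=1}^m a_\ell Y_\ell\geq t}\leq e^{-t^2/(4K_3^2)}$. Because $\bbmu_Y(q)$ is symmetric about $0$, rerunning the same argument with $-\ab$ (still a unit vector) bounds $\prob{\sum_\ell a_\ell Y_\ell\leq -t}$ by the same quantity, and a union bound produces $\prob{\abs{\sum_{\ell=1}^m a_\ell Y_\ell}\geq t}\leq 2e^{-t^2/(4K_3^2)}$ for all $t\geq 0$.

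Finally I would unwind the constants: by Proposition~\ref{Yproporties}(iii)--(iv) we have $K_3=C_3K_2=C_2C_3K_1$ with $C_2,C_3>1$ absolute, so $\frac{1}{4K_3^2}=\left(\frac{c_{2,3}}{K_1}\right)^2$ with $c_{2,3}:=\frac{1}{2C_2C_3}\in(0,1)$, which is exactly the claimed form of $c_2$; the constant $c_{2,3}$ is independent of $q$ precisely because $C_2$ and $C_3$ are. There is no genuine obstacle in this proof — it is a textbook Chernoff bound — so the only points deserving attention are that the moment-generating-function estimate of Proposition~\ref{Yproporties}(iv) is unconditional in $\lambda$ (so the rescaling $\lambda\mapsto\lambda a_\ell$ needs no restriction), that the unit-norm hypothesis is what collapses $\sum_\ell a_\ell^2$ to $1$, and that the two-sided bound relies on the symmetry of $\bbmu_Y(q)$.
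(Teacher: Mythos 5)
Your proof is correct and follows essentially the same route as the paper: both proceed via the Chernoff/exponential-moment argument using the unconditional MGF bound of Proposition~\ref{Yproporties}(iv), collapse $\sum_\ell a_\ell^2$ to $1$ by the unit-norm hypothesis, optimize $\lambda$ to get the exponent $-t^2/(4K_3^2)$, and identify $c_{2,3}=\tfrac{1}{2C_2C_3}$. The only cosmetic difference is that the paper applies the symmetry of $S_{Y,m}$ before Markov rather than after, which is immaterial.
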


\begin{proof}
Consider \scalebox{0.88}{$S_{Y,m}:= \sum_{\ell=1}^{m}a_\ell Y_\ell$}. By the independence of the \scalebox{0.88}{$Y_\ell$}, for all \scalebox{0.88}{$\lambda\in\R$},
{\footnotesize{
\begin{equation}\label{expoSyn}
\E{e^{\lambda S_{Y,m}}}=\prod_{\ell=1}^{m}\E{e^{\lambda a_\ell Y_\ell}}\leq \prod_{\ell=1}^{m} e^{\lambda^2 K_3^2 a_\ell^2}= e^{\lambda^2 K_3^2 (\sum_{\ell=1}^m a_\ell^2)}= e^{\lambda^2 K_3^2},
\end{equation}
}}where the inequality is due to the second result in~Proposition~\ref{Yproporties}-{\it (iv)}, while the last equality follows from the fact that \scalebox{0.88}{$\normii{\bm{a}}=1$}.
By observing that \scalebox{0.88}{$Y_\ell$}, and consequently \scalebox{0.88}{$S_{Y,m}$} are symmetric random variables, it follows from the Markov inequality that for all \scalebox{0.88}{$\lambda,\ t\geq 0$},
\scalebox{0.88}{$\prob{\abs{S_{Y,m}}\geq t}=\prob{S_{Y,m}\geq t}+\prob{-S_{Y,m}\geq t}=2\prob{S_{Y,m}\geq t}\leq 2\frac{\E{e^{\lambda S_{Y,m}}}}{e^{\lambda t}},$}
which,\\ combined with~\eqref{expoSyn}, yield \scalebox{0.88}{$\prob{\abs{S_{Y,m}}\geq t}\leq2 e^{\lambda^2 K_3^2-\lambda t}$}  \scalebox{0.88}{$\forall\lambda\geq 0$} whence \scalebox{0.88}{$\prob{\abs{S_{Y,m}}\geq t}\leq$} \scalebox{0.88}{$2e^{\inf_{\lambda\geq 0}\left(\lambda^2 K_3^2-\lambda t\right)}=2e^{-\frac{t^2}{4K_3^2}} {\bl = 2e^{-c_2t^2},}$} where \scalebox{0.88}{$c_2:= \frac{1}{(2C_2C_3K_1)^2}$} thanks to Proposition~\ref{Yproporties}.
\end{proof}
An application of the Hoeffding-type inequality above leads to the following result providing an upper bound on the \scalebox{0.88}{$\mathbb{L}^p$}-norm of the linear combination \scalebox{0.88}{$\sum_{\ell=1}^{m}a_\ell Y_\ell$} of Proposition~\ref{HoeffdingTypeIneq}, which is one of the inequalities in the more general result known as ``Khintchine's inequality''~\cite[Theorem~3.4]{Rudelson2014LectNotes} and~\cite[Exercises~2.6.5 and~2.6.6]{vershynin2018HDP}.
\begin{proposition}\label{KhintchineProposition}(Khintchine's upper bound). 
Consider \scalebox{0.88}{$Y_{\ell}\sim \bbmu_Y(q),\ \ell\in \intbracket{1,m}$} (recalling that \scalebox{0.88}{$q\geq 2$} throughout) and the deterministic vector \scalebox{0.88}{$\ab:=(a_1, \dots, a_m)^{\top}\in \mathbb{S}^{m-1}$} as in Proposition~\ref{HoeffdingTypeIneq}. Then, {\bl there exists an absolute constant \scalebox{0.88}{$C_4>1$} such that}
{\footnotesize{
\begin{equation}\label{Khintchine}
\left(\E{\abs{\sum_{\ell=1}^{m}a_\ell Y_\ell}^p}\right)^{1/p}\leq K_4\sqrt{p}\quad \mbox{\normalsize for all}\ p\geq 1,\ \mbox{\normalsize where}\ {\bl K_4=\frac{C_4}{\sqrt{\ln\left(\frac{q}{2}+1\right)}}}.
\end{equation}
}}
\end{proposition}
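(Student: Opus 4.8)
The plan is to convert the Hoeffding-type tail bound of Proposition~\ref{HoeffdingTypeIneq} into a moment bound, by repeating, now for $S_{Y,m}:=\sum_{\ell=1}^{m}a_\ell Y_\ell$ in place of $Y$, exactly the ``tails control moments'' computation already carried out in the proof of Proposition~\ref{Yproporties}-{\it (iii)}. Concretely, set $Z:=\sqrt{c_2}\,S_{Y,m}$, so that~\eqref{HoeffIneq1} reads $\prob{\abs{Z}\geq t}\leq 2e^{-t^2}$ for all $t\geq 0$; everything then reduces to bounding $\bigl(\E{\abs{Z}^p}\bigr)^{1/p}$ and undoing the scaling.

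First I would apply the integral identity~\cite[Lemma~1.2.1]{vershynin2018HDP} with the change of variable $u=t^p$ to get $\E{\abs{Z}^p}=\int_{0}^{\infty}\prob{\abs{Z}\geq t}\,pt^{p-1}\,dt\leq\int_{0}^{\infty}2e^{-t^2}pt^{p-1}\,dt=p\,\Gamma(p/2)$, and then invoke $\Gamma(x)\leq 3x^x$ for $x\geq 1/2$ (applicable since $p\geq 1$ forces $p/2\geq 1/2$) to obtain $\E{\abs{Z}^p}\leq 3p(p/2)^{p/2}$. Taking $p$-th roots and using $(3p)^{1/p}\leq 3$ for all $p\geq 1$ gives $\bigl(\E{\abs{Z}^p}\bigr)^{1/p}\leq \frac{3}{\sqrt{2}}\sqrt{p}$. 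Since $\abs{Z}=\sqrt{c_2}\,\abs{S_{Y,m}}$, this yields $\bigl(\E{\abs{S_{Y,m}}^p}\bigr)^{1/p}\leq\frac{3}{\sqrt{2c_2}}\sqrt{p}$, which is~\eqref{Khintchine} with $K_4=\frac{3}{\sqrt{2c_2}}$.

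It then remains only to display $K_4$ in terms of $q$. Substituting $c_2=(c_{2,3}/K_1)^2$ from~\eqref{HoeffIneq1} and $K_1=1/\sqrt{\ln\left(\frac{q}{2}+1\right)}$ from Proposition~\ref{Yproporties}-{\it (i)} gives $\frac{3}{\sqrt{2c_2}}=\frac{3K_1}{\sqrt{2}\,c_{2,3}}=\frac{C_4}{\sqrt{\ln\left(\frac{q}{2}+1\right)}}$ with $C_4:=\frac{3}{\sqrt{2}\,c_{2,3}}$, which is an absolute constant (it involves only the absolute constants $3$ and $c_{2,3}$) and satisfies $C_4>1$ because $c_{2,3}\in(0,1)$ forces $C_4>\frac{3}{\sqrt 2}>1$.

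I do not anticipate a genuine obstacle: the argument is a direct transcription of the standard sub-Gaussian moment estimate, and the only point requiring care is the transparent propagation of $c_2$ through the computation so that $K_4$ comes out explicitly in terms of $q$, consistent with the manuscript's aim of tracking all non-absolute constants. An alternative, slightly shorter route would be to note that the moment generating function bound~\eqref{expoSyn} already controls $\normPsiTwo{S_{Y,m}}$ in terms of $K_3$ and then quote the $\psi_2$--$\mathbb{L}^p$ equivalence~\cite[Proposition~2.5.2]{vershynin2018HDP}; but the direct integral computation above keeps every constant explicit, which is preferable here.
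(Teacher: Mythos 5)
Your proof is correct and follows essentially the same route as the paper: the paper's proof rescales $S_{Y,m}$ by $\sqrt{c_2}$, invokes the Hoeffding-type tail bound, and then says ``following the proof of Proposition~\ref{Yproporties}-\emph{(iii)}'' to conclude $\bigl(\E{\abs{S_{Y,m}}^p}\bigr)^{1/p}\leq \frac{3}{\sqrt{2c_2}}\sqrt{p}=\frac{3}{c_{2,3}\sqrt 2}K_1\sqrt{p}$, exactly the computation you spell out. Your unpacking of the integral identity, the Gamma-function bound, and the explicit tracking of $c_2$ and $c_{2,3}$ match the paper's intent step for step.
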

\begin{proof}
By Proposition~\ref{HoeffdingTypeIneq}, \scalebox{0.88}{$S_{Y,m}:= \sum_{\ell=1}^{m}a_\ell Y_\ell$} satisfies \scalebox{0.88}{$\prob{\abs{\sqrt{c_2}S_{Y,m}}\geq t}\leq 2e^{-t^2}$} \scalebox{0.88}{$\forall t\geq 0$}. Thus, following the proof of Proposition~\ref{Yproporties}-{\it (iii)}, \scalebox{0.88}{$\left(\E{\abs{\sqrt{c_2}S_{Y,m}}^p}\right)^{1/p}\leq \frac{3}{\sqrt{2}}\sqrt{p}$} for all \scalebox{0.88}{$p\geq 1$}, {\bl leading to \scalebox{0.88}{$\left(\E{\abs{S_{Y,m}}^p}\right)^{1/p}\leq \frac{3}{\sqrt{2c_2}}\sqrt{p}=\frac{3}{c_{2,3}\sqrt{2}}K_1\sqrt{p}$} for all \scalebox{0.88}{$p\geq 1$}, which completes the proof}.
\end{proof}

The following result in the spirit of~\cite[Lemma~3.3]{MatJLT2008Variants} provides an inequality related to the {\it moment generating function} \scalebox{0.88}{$\E{e^{\lambda U}}$} of a centered subexponential random variable \scalebox{0.88}{$U$} (see, e.g.,~\cite[Proposition~2.7.1-{\it (e)}]{vershynin2018HDP}). 
%Unlike~\cite[Lemma~3.3]{MatJLT2008Variants}. 
%it is stated and proved by using known and explicit constants.
\begin{lemma}\label{expoLambdaU}
Let \scalebox{0.88}{$\x\in\sNOne$} be fixed, and define \scalebox{0.88}{$T_i=\sum_{j=1}^N x_j X_{ij}$}, $i=1, \dots, n$, where the $\Xij\sim \bbmu_X(q)$ are independent, and recall that $q\geq 2$ throughout.~Consider \scalebox{0.88}{$U:=\frac{1}{\sqrt{n}}\left(T_1^2+T_2^2+\cdots+T_n^2-n\right)$}. {\bl There exists an absolute constant \scalebox{0.88}{$C_5>0$} such that}
{\footnotesize{
\begin{equation}\label{expoUineq}
\E{e^{\lambda U}}\leq e^{K_5^2\lambda^2/2} \quad\text{\normalsize for all}\  \abs{\lambda}\leq \frac{1}{K_5}\sqrt{n}, \ \text{\normalsize where}\ {\bl K_5 = \frac{C_5 q}{\ln\left(\frac{q}{2}+1\right)}}.
\end{equation}
}}
\end{lemma}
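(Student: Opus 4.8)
The plan is to follow the approach of \cite[Lemma~3.3]{MatJLT2008Variants}, exploiting the independence of the rows of $\mathfrak{H}$ together with Khintchine's bound (Proposition~\ref{KhintchineProposition}). First I would observe that, since the $X_{ij}$ are independent across $i$, the variables $T_1,\dots,T_n$ are independent and each $W_i:=T_i^2-1$ depends only on the $i$th row. Using $\E{X_{ij}}=0$, $\var{X_{ij}}=\hat{q}^2\cdot\frac{2}{q}=1$, and $\x\in\sNOne$, one has $\E{T_i^2}=\sum_{j=1}^N x_j^2\,\var{X_{ij}}=\normiiN{\x}^2=1$, hence $\E{W_i}=0$ and $\E{U}=0$. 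Writing $\mu:=\lambda/\sqrt{n}$, independence then gives
\[
\E{e^{\lambda U}}=\prod_{i=1}^n\E{e^{\mu(T_i^2-1)}}=\bigl(\E{e^{\mu W_1}}\bigr)^{n},
\]
so it suffices to control the single-row moment generating function $\E{e^{\mu W_1}}$ for $\abs{\mu}$ small (these are finite since $T_1$ is a finite sum of bounded variables).

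Next I would bound the moments of $W_1=T_1^2-1$. Since $X\sim\bbmu_X(q)$ is equivalent to $Y:=X/\hat{q}\sim\bbmu_Y(q)$ by~\eqref{muXqmuYq}, we have $\hat{q}^{-1}T_1=\sum_{j=1}^N x_j Y_{1j}$, which is exactly the linear combination of Proposition~\ref{KhintchineProposition} with unit-norm coefficient vector $\x$; hence $\bigl(\E{\abs{T_1}^{2p}}\bigr)^{1/p}\le(\hat{q}K_4)^2\cdot 2p$ for all $p\ge1$. Setting $L:=(\hat{q}K_4)^2=\frac{C_4^2 q}{2\ln(q/2+1)}$ (note $L>1$, since $C_4>1$ and $t\mapsto t/\ln(t+1)$ increases on $[1,\infty)$, with value $1/\ln 2>1$ at $t=q/2=1$), Minkowski's inequality yields $\bigl(\E{\abs{W_1}^p}\bigr)^{1/p}\le 2Lp+1\le 3Lp$ for every $p\ge1$.

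Then I would expand $\E{e^{\mu W_1}}$ as a power series: because $\E{W_1}=0$ the linear term vanishes, and using $\abs{\E{W_1^p}}\le(3Lp)^p$ with $p!\ge(p/e)^p$, whenever $3Le\abs{\mu}\le\frac12$ summing the geometric series gives $\E{e^{\mu W_1}}\le 1+\sum_{p\ge2}(3Le\abs{\mu})^p\le 1+18L^2e^2\mu^2\le e^{18L^2e^2\mu^2}$ (the last step from $1+x\le e^x$). Raising to the $n$th power and substituting $\mu=\lambda/\sqrt{n}$ gives $\E{e^{\lambda U}}\le e^{18L^2e^2\lambda^2}$ for $\abs{\lambda}\le\sqrt{n}/(6Le)$. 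Taking $K_5:=6eL=\frac{3eC_4^2 q}{\ln(q/2+1)}$ (so $C_5:=3eC_4^2$ is an absolute constant) turns this into $\E{e^{\lambda U}}\le e^{K_5^2\lambda^2/2}$ for $\abs{\lambda}\le\sqrt{n}/K_5$, which is~\eqref{expoUineq}.

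I do not expect a deep obstacle here---it is essentially a routine sub-exponential moment generating function estimate---but two points need care. First, $\E{T_i^2}$ must equal exactly $1$: this correct centering is what forces the linear term in the power series to vanish, without which the $n$-fold product would pick up spurious $e^{\Theta(\sqrt{n})}$ growth when $\mu=\lambda/\sqrt{n}$. Second, one must verify that the sub-exponential scale $L=(\hat{q}K_4)^2$ of $T_1^2$ is of order $q/\ln(q/2+1)$, which is exactly what produces the stated $q$-dependence of $K_5$; both of these rely on applying Khintchine's bound with $\x$ on the unit sphere, which is why the hypothesis $\x\in\sNOne$ enters.
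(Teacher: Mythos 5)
Your proof is correct and follows essentially the same route as the paper's: reduce to a single-row moment generating function by independence, verify $\E{T_1^2}=1$ so that the centered variable has vanishing linear term, invoke Khintchine (Proposition~\ref{KhintchineProposition}) to obtain the sub-exponential moment growth $\E{|T_1|^{2p}}^{1/p}\lesssim qK_4^2\,p$, and then Taylor-expand the MGF, sum the resulting geometric series, and apply $1+x\le e^x$. The only cosmetic difference is that you bound $\E{|W_1|^p}$ via Minkowski together with $L\ge 1$, whereas the paper uses the elementary inequality $(\upnu+\upmu)^k\le 2^{k-1}(\upnu^k+\upmu^k)$ together with $K_4^2q\ge 1$; both yield the same $(\mathrm{const}\cdot qK_4^2\,p)^p$ bound and the same $q$-dependence $K_5=\Theta\bigl(q/\ln(q/2+1)\bigr)$.
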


\begin{proof}
For \scalebox{0.88}{$\x\in\sNOne$} fixed, the random variables $T_i$ are independent and identically distributed, which yields
{\footnotesize{
\begin{equation}\label{firstLambdaUineq}
\E{e^{\lambda U}}=\E{e^{(\lambda/\sqrt{n}) \sum \limits_{i=1}^n(T_i^2-1)}}=\left(\E{e^{(\lambda/\sqrt{n}) (T_1^2-1)}}\right)^n.
\end{equation}
}}
Then the remainder of the proof involves three steps. The first one establishes a Bernstein-like condition, which serves to derive a bound on \scalebox{0.88}{$\E{e^{\lambda(T^2-1)}}$} by means of a Taylor expansion in step~2, for appropriate values of $\lambda$. The third step completes the proof by putting all ingredients together. \\ 
{\it Step~1: ({\em Bernstein}-like condition)}. To upper bound the last term in~\eqref{firstLambdaUineq}, we observe, using the equality \scalebox{0.88}{$\Xij=\hat{q}\Yij$} and Khintchine's upper bound~\eqref{Khintchine}, that for all $p\geq 1$,
{\footnotesize{
\begin{equation}\label{T1Khintcine}
\left(\E{\abs{T_1}^p} \right)^{1/p}=\left(\E{\abs{\sum_{j=1}^N x_j X_{1j}}^p} \right)^{1/p}=\hat{q}\left(\E{\abs{\sum_{j=1}^N x_j Y_{1j}}^p} \right)^{1/p}\leq K_4\sqrt{\frac{pq}{2}}, 
\end{equation}
}}
which implies the Bernstein-like condition \scalebox{0.88}{$\E{\abs{T_1^2}^k}\leq K_4^{2k}(kq)^k$}, for all \scalebox{0.88}{$k\geq 1$}. Consider \scalebox{0.88}{$\upzeta:=T_1^2-1$}, which is centered since \scalebox{0.88}{$\E{T_1^2}=\E{\sum_{j=1}^N x_j^2 X_{1j}^2}=\E{X_{11}^2}\sum_{j=1}^N x_j^2=1$} given that the \scalebox{0.88}{$\Xij$} are i.i.d., mean zero, and \scalebox{0.88}{$\x\in \sNOne$}. Using  \scalebox{0.88}{$(\upnu+\upmu)^k\leq 2^{k-1}(\upnu^k+\upmu^k)$} for all \scalebox{0.88}{$k\geq 1$} and \scalebox{0.88}{$\upnu, \upmu\geq 0$} and the fact that \scalebox{0.88}{$K_4^2 q \geq 1$} for all \scalebox{0.88}{$q\geq 2$}, we have that for all \scalebox{0.88}{$k\geq 1$},
{\footnotesize{
\begin{equation}\label{zetaCondition}
\E{\abs{\upzeta}^k}\leq \E{\left(\abs{T_1^2}+\!1\right)^k}\leq 2^{k-1}\left(\E{\abs{T_1^2}^k}+\!1 \right)\leq 2^{k-1} \left(K_4^{2k}(kq)^k+\!1\right)\leq 2^k(kq)^k K_4^{2k}.
\end{equation}
}}
{\it Step~2:  (Taylor expansion)}.  Recalling that \scalebox{0.88}{$\E{\upzeta}=0$} and using the inequality \scalebox{0.88}{$k!\geq k^k e^{-k}$} for all \scalebox{0.88}{$k\geq 2$}, which follows from the Stirling formula (see, e.g.,~\cite[Section~1.2]{tao2012topicsInRMT}), we have that for all \scalebox{0.88}{$\abs{\lambda}<1/K_5'$} with \scalebox{0.88}{$K_5':=2qeK_4^2$},
{\footnotesize{
\begin{eqnarray*}
\E{e^{\lambda\upzeta}}&=&\E{1+\lambda\upzeta+\sum_{k=2}^{\infty}\frac{\lambda^k\upzeta^k}{k!} }\leq 1+\sum_{k=2}^{\infty}\frac{\abs{\lambda}^k}{k!}\E{\abs{\upzeta}^k} \leq 1+\sum_{k=2}^{\infty} \abs{\lambda}^k(2qK_4^2)^k\frac{k^k}{k!} \\
&\leq& 1+\sum_{k=2}^{\infty}(K_5'\abs{\lambda})^k=1+\frac{(K_5'\lambda)^2}{1-K_5'\abs{\lambda}}\leq e^{\frac{(K_5'\lambda)^2}{1-K_5'\abs{\lambda}}},
\end{eqnarray*}
}}
where the expectation and the infinite sum are interchanged via the first inequality using Fubini--Tonelli's theorem (see e.g.,~\cite[Theorem~1.7.2]{durrett2019probability}) since \scalebox{0.88}{$\abs{\lambda}^k\abs{\upzeta}^k/k!\geq 0$}. The second inequality follows from~\eqref{zetaCondition}, and the last one results from \scalebox{0.88}{$1+\mathfrak{t}\leq e^{\mathfrak{t}}$} for all $\mathfrak{t}\in\R$. Let \scalebox{0.88}{$K_5 = 2K_5' {\bl = 4qeK_4^2=\frac{4eC_4^2q}{\ln\left(\frac{q}{2}+1\right)}}$}. Then the condition \scalebox{0.88}{$\abs{\lambda}\leq \frac{1}{K_5}$} implies \scalebox{0.88}{$1-K_5'\abs{\lambda}\geq \frac{1}{2}$}. Hence, \scalebox{0.88}{$\E{e^{\lambda\upzeta}}\leq e^{\frac{K_5^2\lambda^2}{2}}$}, for all \scalebox{0.88}{$\abs{\lambda}\leq \frac{1}{K_5}$}.\\
{\it Step 3: (Putting all ingredients together)}. It follows from both latter inequalities and~\eqref{firstLambdaUineq} that\\ \scalebox{0.88}{$\E{e^{\lambda U}}\leq \left(\E{e^{\frac{\lambda}{\sqrt{n}}\upzeta}}\right)^n\leq \left(e^{\frac{K_5^2\lambda^2}{2n}}\right)^n=e^{\frac{K_5^2\lambda^2}{2}}, \text{\normalsize for all}\ \abs{\frac{\lambda}{\sqrt{n}}}\leq \frac{1}{K_5},$} which completes the proof.
\end{proof}

The next result in the spirit of~\cite[Proposition~3.2]{MatJLT2008Variants} states that the random variable \scalebox{0.88}{$U$} of Lemma~\ref{expoLambdaU} has a (see, e.g.,~\cite[Definition~2.1]{MatJLT2008Variants}) {\it sub-Gaussian tail up to} \scalebox{0.88}{$2\tilde{K}_5\sqrt{n}$}, for any \scalebox{0.88}{$\tilde{K}_5\in (0, K_5)$}, and is essential for the proof of Theorem~\ref{FirstJLT1}.
\begin{lemma}\label{Ukt}
Let \scalebox{0.88}{$U$} be defined as in Lemma~\ref{expoLambdaU}. Then for all \scalebox{0.88}{$t\in (0, 2K_5\sqrt{n})$}, it holds that \scalebox{0.88}{$\prob{U\geq t}\leq e^{-\frac{1}{4K_5^2}t^2}$} and \scalebox{0.88}{$\prob{-U\geq t}\leq e^{-\frac{1}{4K_5^2}t^2}.$}
\end{lemma}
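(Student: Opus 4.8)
The plan is to combine the exponential moment bound of Lemma~\ref{expoLambdaU} with a standard Chernoff (exponential Markov) argument. Concretely, for any $\lambda\geq 0$ one has $\prob{U\geq t}=\prob{e^{\lambda U}\geq e^{\lambda t}}\leq e^{-\lambda t}\E{e^{\lambda U}}$, and Lemma~\ref{expoLambdaU} lets us replace $\E{e^{\lambda U}}$ by $e^{K_5^2\lambda^2/2}$ \emph{provided} $\lambda\leq \sqrt{n}/K_5$. So everything reduces to choosing, for a given $t\in(0,2K_5\sqrt{n})$, an admissible $\lambda$ for which $-\lambda t + K_5^2\lambda^2/2\leq -t^2/(4K_5^2)$. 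The only reason the statement restricts to $t<2K_5\sqrt{n}$ is precisely that the unconstrained optimizer $\lambda=t/K_5^2$ leaves the admissible interval once $t>K_5\sqrt{n}$; this admissibility bookkeeping is the sole (minor) obstacle, and the rest is routine.

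First I would handle the upper tail. The clean choice is $\lambda:=t/(2K_5^2)$: since $t<2K_5\sqrt{n}$, this $\lambda$ satisfies $\lambda<\sqrt{n}/K_5$, hence lies in the range where Lemma~\ref{expoLambdaU} applies. Substituting gives $\prob{U\geq t}\leq \exp\!\big(-\lambda t + K_5^2\lambda^2/2\big)=\exp\!\big(-t^2/(2K_5^2)+t^2/(8K_5^2)\big)=\exp\!\big(-3t^2/(8K_5^2)\big)\leq \exp\!\big(-t^2/(4K_5^2)\big)$, which is the asserted bound (in fact slightly stronger). One could alternatively split into the regimes $t\leq K_5\sqrt{n}$ (use $\lambda=t/K_5^2$, yielding exponent $-t^2/(2K_5^2)$) and $K_5\sqrt{n}<t<2K_5\sqrt{n}$ (use the boundary value $\lambda=\sqrt{n}/K_5$ and verify the quadratic inequality $t^2-4K_5\sqrt{n}\,t+2K_5^2 n\leq 0$ on that interval), but the single choice $\lambda=t/(2K_5^2)$ covers both uniformly and is less bookkeeping.

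For the lower tail $\prob{-U\geq t}$ I would exploit that Lemma~\ref{expoLambdaU} is symmetric in $\lambda$: it holds for all $\abs{\lambda}\leq \sqrt{n}/K_5$, so in particular $\E{e^{-\lambda U}}=\E{e^{(-\lambda)U}}\leq e^{K_5^2\lambda^2/2}$ on the same range (no new computation is needed — $-U=\tfrac{1}{\sqrt{n}}(n-\sum_i T_i^2)$ need not be analyzed directly). Repeating the Chernoff argument verbatim with $\lambda=t/(2K_5^2)$ then gives $\prob{-U\geq t}\leq \exp\!\big(-3t^2/(8K_5^2)\big)\leq \exp\!\big(-t^2/(4K_5^2)\big)$, completing the proof.

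I do not anticipate any genuine difficulty here; the ``hard part,'' such as it is, was already done in Lemma~\ref{expoLambdaU}, and the present step is just the passage from a moment generating function bound to a concentration inequality, with the mild caveat of keeping $\lambda$ within the admissible window — which is guaranteed exactly by the hypothesis $t<2K_5\sqrt{n}$.
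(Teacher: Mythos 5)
Your proof is correct and follows essentially the same route as the paper's: apply Markov's inequality to $e^{\lambda U}$, invoke the moment generating function bound of Lemma~\ref{expoLambdaU}, choose $\lambda = t/(2K_5^2)$ (which lies in the admissible window $(0,\sqrt{n}/K_5)$ precisely because $t<2K_5\sqrt{n}$), and treat the lower tail symmetrically with $\lambda<0$. The only cosmetic difference is that the paper first weakens $e^{K_5^2\lambda^2/2}$ to $e^{K_5^2\lambda^2}$ so that $\lambda=t/(2K_5^2)$ is the exact minimizer and the exponent $-t^2/(4K_5^2)$ appears directly, whereas you retain the factor $1/2$, obtain the slightly sharper exponent $-3t^2/(8K_5^2)$, and then relax it to the stated bound.
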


\begin{proof}
To prove the first inequality, fix \scalebox{0.88}{$t\in (0, 2K_5\sqrt{n})$} and consider the function \scalebox{0.88}{$\wp_1(\lambda):=-\lambda t+K_5^2\lambda^2$} defined for all \scalebox{0.88}{$\lambda\in \mathbb{I}_1:=\left(0, K_5^{-1}\sqrt{n}\right)$}. Then  \scalebox{0.88}{$\lambda^{\star}_1:=\argmin\accolade{\wp_1(\lambda):\lambda\in \mathbb{I}_1}=\frac{t}{2K_5^2}$} satisfies \scalebox{0.88}{$\wp_1(\lambda^{\star}_1)=-\frac{t^2}{4K_5^2}$}. Thus, by Markov's inequality, it holds that for all \scalebox{0.88}{$\lambda\in \mathbb{I}_1$},
\scalebox{0.88}{$\prob{U\geq t} = \prob{e^{\lambda U}\geq e^{\lambda t}}\leq e^{-\lambda t}\E{e^{\lambda U}}\leq  e^{\wp_1(\lambda)}, $}
the last inequality following from~\eqref{expoUineq}, which implies that $\prob{U\geq t} \leq e^{\min \accolade{\wp_1(\lambda):\ \lambda\in \mathbb{I}_1}}=e^{-\frac{1}{4K_5^2}t^2}$.
	
To derive the second inequality, we use the same previous arguments with the function \scalebox{0.88}{$\wp_2(\lambda):=\lambda t+K_5^2\lambda^2$} for all \scalebox{0.88}{$\lambda\in \mathbb{I}_2:=\left(-K_5^{-1}\sqrt{n}, 0\right)$}, where \scalebox{0.88}{$t\in (0, 2K_5\sqrt{n})$} is fixed, and then observe that \scalebox{0.86}{$\lambda^{\star}_2:=\argmin\accolade{\wp_2(\lambda):\lambda\in \mathbb{I}_2}=-\frac{t}{2K_5^2}$} satisfies \scalebox{0.86}{$\wp_2(\lambda^{\star}_2)=-\frac{t^2}{4K_5^2}$}. Thus, for all \scalebox{0.88}{$\lambda\in \mathbb{I}_2$},
\scalebox{0.88}{$\prob{-U\geq t} = \prob{e^{\lambda U}\geq e^{-\lambda t}}\leq e^{\lambda t}\E{e^{\lambda U}}\leq  e^{\wp_2(\lambda)}, $} which finally implies that \scalebox{0.88}{$\prob{-U\geq t}  \leq e^{\min \accolade{\wp_2(\lambda):\ \lambda\in \mathbb{I}_2}}=e^{-\frac{1}{4K_5^2}t^2}$},  and the proof is completed.
\end{proof}
The result of Theorem~\ref{FirstJLT1} from which is derived  Theorem~\ref{sTheorem} is in the spirit of~\cite[Theorem~4.1]{MatJLT2008Variants} (see also~\cite[Theorem~2]{FedSchJohHeo2018}) reported below  for comparison purposes.
\begin{theorem}\label{MatTheorem42}
Let \scalebox{0.88}{$\epsilon\in (0, 1/2]$, $\delta\in (0,1)$} and \scalebox{0.88}{$\upalpha\in \left[\frac{1}{\sqrt{N}}, 1\right]$}. Consider the matrix \scalebox{0.88}{$\Hfrac\in \rnN$} given by~\eqref{HMatrixEnsemble}, defined by \scalebox{0.88}{$\Xij~\sim \bbmu_X(q)$}, with \scalebox{0.88}{$q=2/C_0\upalpha^2\ln(N/\epsilon \delta)\geq 2$} for some sufficiently large constant \scalebox{0.88}{$C_0$}. Let \scalebox{0.88}{$n=C\epsilon^{-2}\ln(4/\delta)$} for a sufficiently large constant $C$ (assuming $n$ integral). Then for any \scalebox{0.88}{$\x\in\sNOne$} satisfying \scalebox{0.88}{$\norminf{\x}\leq \upalpha$, $\mathfrak{H}$} satisfies~\eqref{JLTBound1}.
\end{theorem}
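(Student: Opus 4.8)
The plan is to follow the scheme of Matou\v{s}ek's~\cite[Theorem~4.1]{MatJLT2008Variants}, with Lemma~\ref{Ukt} supplying the required tail estimate, while keeping every constant explicit in terms of~$q$.

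First I would dispose of the trivial case $\x=\zerob$ (both sides of~\eqref{JLTBound1} vanish) and then, using homogeneity of the linear map $\x\mapsto\mathfrak{H}\x$, reduce to $\x\in\sNOne$. For such an $\x$, set $T_i=\sum_{j=1}^N x_j X_{ij}$ exactly as in Lemma~\ref{expoLambdaU}, so that $\normiin{\mathfrak{H}\x}^2=\frac1n\sum_{i=1}^n T_i^2$ and, with $U:=\frac1{\sqrt n}\bigl(\sum_{i=1}^n T_i^2-n\bigr)$,
\begin{equation*}
\normiin{\mathfrak{H}\x}^2-1=\frac{U}{\sqrt n}.
\end{equation*}
An elementary computation then shows that $\{\,|U|<\epsilon\sqrt n\,\}$ is contained in the event of~\eqref{JLTBound1}: on that event $\bigl|\normiin{\mathfrak{H}\x}^2-1\bigr|<\epsilon$, and since $\epsilon\le\tfrac12$ one has $(1-\epsilon)^2\le 1-\epsilon$ and $1+\epsilon\le(1+\epsilon)^2$, hence $(1-\epsilon)^2\le\normiin{\mathfrak{H}\x}^2\le(1+\epsilon)^2$, i.e.\ $(1-\epsilon)\normiiN{\x}\le\normiin{\mathfrak{H}\x}\le(1+\epsilon)\normiiN{\x}$ because $\normiiN{\x}=1$. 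Thus it remains to prove $\prob{|U|\ge\epsilon\sqrt n}\le\delta$.

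Next I would apply Lemma~\ref{Ukt} with $t=\epsilon\sqrt n$. The only point to verify is that $t$ lies in the admissible window $(0,2K_5\sqrt n)$; since $q\ge 2$ forces $K_5=\tfrac{C_5 q}{\ln(q/2+1)}$ to be bounded below by an absolute constant exceeding $\tfrac14$ (indeed the proof of Lemma~\ref{expoLambdaU} gives $C_5=4eC_4^2$ with $C_4>1$, so $K_5>4e$), we get $2K_5>\tfrac12\ge\epsilon$, whence $\epsilon\sqrt n<2K_5\sqrt n$. Lemma~\ref{Ukt} together with a union bound over the two one-sided tails then yields
\begin{equation*}
\prob{|U|\ge\epsilon\sqrt n}\le\prob{U\ge\epsilon\sqrt n}+\prob{-U\ge\epsilon\sqrt n}\le 2\exp\!\left(-\frac{\epsilon^2 n}{4K_5^2}\right),
\end{equation*}
which is at most $\delta$ exactly when $n\ge 4K_5^2\epsilon^{-2}\ln(2/\delta)$. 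Substituting $K_5=\tfrac{C_5 q}{\ln(q/2+1)}$ rewrites this as $n\ge \tfrac{4C_5^2 q^2}{(\ln(q/2+1))^2}\epsilon^{-2}\ln(2/\delta)$, so the theorem holds with the absolute constant $C_1:=4C_5^2$.

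The substantive work has already been absorbed into Lemmas~\ref{expoLambdaU} and~\ref{Ukt} (the Bernstein-type control of $\E{e^{\lambda U}}$ and the resulting tail bound); what remains is essentially bookkeeping. The only places that need care are: ensuring the squaring step requires only $\bigl|\normiin{\mathfrak{H}\x}^2-1\bigr|<\epsilon$ (and not $\epsilon/c$ for some $c>1$), so that no spurious constant leaks into the bound on $n$; confirming that $\epsilon\sqrt n$ falls inside the validity range of Lemma~\ref{Ukt} for every admissible $\epsilon$; and checking that the final constant $C_1=4C_5^2$ is genuinely independent of $q$, $\epsilon$, and $\delta$. I do not expect any of these to be a real obstacle.
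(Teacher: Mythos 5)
You have produced a clean argument, but it proves the wrong theorem: what you have written is essentially the paper's proof of Theorem~\ref{FirstJLT1}, not a proof of Theorem~\ref{MatTheorem42}. Two symptoms make this visible. First, the lower bound you obtain on~$n$, namely $n\ge 4K_5^2\epsilon^{-2}\ln(2/\delta)=\tfrac{4C_5^2 q^2}{(\ln(q/2+1))^2}\epsilon^{-2}\ln(2/\delta)$, depends on~$q$, whereas Theorem~\ref{MatTheorem42} asserts $n=C\epsilon^{-2}\ln(4/\delta)$ with an absolute constant~$C$ (independent of~$q$, $N$, and $\upalpha$). In that theorem $q=2/(C_0\upalpha^2\ln(N/\epsilon\delta))$, which for $\upalpha$ close to $1/\sqrt{N}$ is of order $N/\ln N$, so your $q$-dependent factor would force $n$ to be of order $N^2$ up to logarithms --- larger than $N$ and useless for dimension reduction. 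Second, your argument never uses either the hypothesis $\norminf{\x}\le\upalpha$ or the specific form of $q$; these are not incidental but are exactly what lets Matou\v{s}ek show that, despite the spikiness of $\bbmu_X(q)$ for large $q$, the variable $T_i=\sum_j x_j X_{ij}$ retains a sub-Gaussian tail with a $q$-\emph{independent} constant. The paper's Lemma~\ref{Ukt} controls the tail of $U$ only through the $q$-dependent $K_5$ and has no $\ell_\infty$ hypothesis to leverage, so it cannot produce the $q$-free bound that Theorem~\ref{MatTheorem42} claims.

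For orientation: the paper itself does not prove Theorem~\ref{MatTheorem42}. It is an external result (Matou\v{s}ek's Theorem~4.1, also \cite[Theorem~2]{FedSchJohHeo2018}) reported purely for comparison with Theorem~\ref{FirstJLT1}, and the surrounding text explicitly contrasts the two precisely on the points above (restriction to small $\ell_\infty$-norm, and $q$ depending on $N$). Your argument is a correct and essentially identical rendering of the paper's own proof of Theorem~\ref{FirstJLT1}, modulo the cosmetic choice of using $t=\epsilon\sqrt{n}$ (via $(1\pm\epsilon)^2\lessgtr 1\pm\epsilon$) rather than the paper's $t=2\epsilon\sqrt{n}$ (via $(1\pm\epsilon)^2\gtrless 1\pm 2\epsilon$), which costs you a factor of~$4$ in the constant. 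To actually establish Theorem~\ref{MatTheorem42} you would need a replacement for Lemma~\ref{expoLambdaU}/Lemma~\ref{Ukt} in which the constant $K_5$ is made $q$-free by exploiting $\norminf{\x}\le\upalpha$ together with $q=2/(C_0\upalpha^2\ln(N/\epsilon\delta))$; that derivation is not in this paper.
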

As mentioned in the first paragraph of~\cite[Section~4]{MatJLT2008Variants}, Theorem~\ref{MatTheorem42} {\bl only holds} for~$\x$ with sufficiently small \scalebox{0.88}{$\ell_{\infty}$}-norm. Moreover, {\bl it holds} for $q$ depending on \scalebox{0.88}{$N$}, which is not the case for Theorem~\ref{FirstJLT1} {\bl proved} below, providing a sufficient condition on $n$ so that the matrix \scalebox{0.88}{$\Hfrac$} given by~\eqref{HMatrixEnsemble} is a JLT. 
\subsubsection*{Proof of Theorem~\ref{FirstJLT1}}
\begin{proof}
The proof basically follows that of~\cite[Theorem~3.1]{MatJLT2008Variants}. Without any loss of generality, we assume \scalebox{0.88}{$\x\in \sNOne$}. Let the vector \scalebox{0.88}{$\x$} be fixed, and consider the same random variables \scalebox{0.88}{$T_i, i\in\intbracket{1,n}$} and \scalebox{0.88}{$U$} of Lemma~\ref{expoLambdaU}. 
{\bl Recall \scalebox{0.88}{$K_5 := \frac{C_5 q}{\ln\left(\frac{q}{2}+1\right)}$} from Lemma~\ref{expoLambdaU}, where \scalebox{0.88}{$C_5>0$} is an absolute constant.} We first observe that the $i$th component of \scalebox{0.88}{$\Hfrac\x$} is given by \scalebox{0.88}{$(\Hfrac\x)_i=\frac{1}{\sqrt{n}} T_i$}, which implies \scalebox{0.88}{$\normiin{\mathfrak{H}\x}^2-1=\frac{1}{\sqrt{n}} U$}. Thus, 
\scalebox{0.88}{$\prob{\normiin{\mathfrak{H}\x}\geq 1+\epsilon}\leq \prob{\normiin{\mathfrak{H}\x}^2\geq 1+2\epsilon}=\prob{U\geq 2\epsilon\sqrt{n}}$} \scalebox{0.88}{$\leq e^{-\frac{1}{4K_5^2}(2\epsilon\sqrt{n})^2}\leq \frac{\delta}{2}$},
where the second inequality follows from the first result of Lemma~\ref{Ukt} since \scalebox{0.88}{$2\epsilon\sqrt{n} \in (0, 2K_5\sqrt{n})$},  while the last one follows from the choice of~$n$ {\bl according to \scalebox{0.88}{$n\geq K_5^2\epsilon^{-2}\ln(2/\delta)$}}.  Similarly, it holds that\\
\scalebox{0.88}{$\prob{\normiin{\mathfrak{H}\x}\leq 1-\epsilon}\leq \prob{\normiin{\mathfrak{H}\x}^2\leq 1-2\epsilon}=\prob{-U\geq 2\epsilon\sqrt{n}}\leq e^{-\frac{1}{4K_5^2}(2\epsilon\sqrt{n})^2}\leq \frac{\delta}{2}$} by the second result of Lemma~\ref{Ukt}. Then noticing that 
\scalebox{0.86}{$\prob{(1-\epsilon)\leq \normiin{\mathfrak{H}\x}\leq (1+\epsilon)} \geq 1-\prob{\normiin{\mathfrak{H}\x}\leq 1-\epsilon}-\prob{\normiin{\mathfrak{H}\x}\geq 1+\epsilon}$}, completes the proof.
\end{proof}

The proof of Theorem~\ref{sTheorem} (showing how $n$ and $s$ can be chosen so that \scalebox{0.88}{$\Hfrac$} given by~\eqref{HashlikeEnsemble} is a JLT) is presented below.
\subsubsection*{Proof of Theorem~\ref{sTheorem}}
\begin{proof}
The result straightforwardly follows from Theorem~\ref{FirstJLT1} since the inequality {\bl \scalebox{0.88}{$n\geq \frac{C_1 q^2}{\left(\ln\left(\frac{q}{2}+1\right)\right)^2}\epsilon^{-2}\ln(2/\delta)$}} implies \scalebox{0.88}{$s^2\geq 4{\bl C_1}\epsilon^{-2}\frac{n\ln(2/\delta)}{\left(\ln\left(\frac{n}{s}+1\right)\right)^2}$} when \scalebox{0.88}{$q=\frac{2n}{s}$}.
\end{proof}
{\bbl
\begin{remark}
We conclude this section by emphasizing that the goal of this manuscript is not only to propose a class of sparse JLTs but rather to also have ready a framework allowing a variant of the JL lemma with no potential restriction to the unit sphere that can, at the same time, allow and easy the analysis of extreme singular values as presented in Section~\ref{sec3}. Indeed, while the result of Kane and Nelson~\cite{KaNel2014SparseLidenstrauss} on $s$-hashing matrices (with dependent entries) is stronger than ours, there does not exist, to our knowledge, any work which is specific to the singular values of such matrices. This requires us to introduce independence in the entries, leading to $s$-hashing-like matrices, which not only worsens the result in~\cite{KaNel2014SparseLidenstrauss}, but is also the price to pay to analyze the singular values of the latter matrices. On the other hand, it is also worth recalling the fast Johnson--Lindenstrauss transform (FJLT, based on hashing techniques) of Ailon and Chazelle~\cite{AilChaz2006} as a very strong competitor of the sparse JLT proposed in this manuscript. However, we are not aware of any work on the analysis of the singular values of the FJLT.
\end{remark}

}

\section{Extreme singular values: asymptotic and non-asymptotic behaviors}\label{sec3}
In this section we investigate both the asymptotic and non-asymptotic behaviors of the extreme singular values of the sparse matrices (with independent entries) introduced in Section~\ref{sec2}. Since the latter turn out to be the extreme singular values of \scalebox{0.88}{$\mathfrak{A}:=\mathfrak{H}^{\top}\in\rNn$} (assuming \scalebox{0.88}{$N\geq n$}), the remainder of the analysis focuses on
{\footnotesize{
\begin{equation}\label{AMatrixEnsemble}
\mathfrak{A}=\frac{1}{\sqrt{n}}(\Xij)_{\underset{1\leq j\leq n}{1\leq i\leq N}}=\frac{\hat{q}}{\sqrt{n}}(\Yij)_{\underset{1\leq j\leq n}{1\leq i\leq N}},\ \mbox{\normalsize with}\ \Xij\sim\bbmu_X(q),\ \mbox{\normalsize and}\ \Yij\sim\bbmu_Y(q)
\end{equation}
}}so that the particular case where $\frac{1}{q}=\frac{s}{2n}$ leads to
{\footnotesize{
\begin{equation}\label{AhashingLike}
\mathfrak{A}=\frac{1}{\sqrt{s}}(\Yij)_{\underset{1\leq j\leq n}{1\leq i\leq N}},\ \mbox{\normalsize where}\ \Yij\ \mbox{\normalsize is distributed as}\ Y\ \mbox{\normalsize in Proposition~\ref{sProposition}}.
\end{equation}
}}
Recall the following useful definition (see, e.g.,~\cite[Definition~2.2]{Rudelson2014LectNotes}), which is considered throughout the manuscript.
\begin{definition}\label{singValDef}
The singular values of \scalebox{0.88}{$\Amatrix\in\rNn,\ N\geq n$} are the eigenvalues of \scalebox{0.88}{$\abs{\Amatrix}:=\sqrt{\Amatrix^{\top}\Amatrix}$}, arranged in the decreasing  order \scalebox{0.88}{$s_1(\Amatrix)\geq s_2(\Amatrix)\geq \dots \geq s_n(\Amatrix)$}.
\end{definition}
The first singular value is related to the operator norm of the linear operator $\Amatrix$ acting between Euclidean spaces~\cite{Rudelson2014LectNotes,RudVersh2010ExtrSingVal} as follows: \scalebox{0.88}{$s_1(\Amatrix) = \underset{\x\in \snOne}{\max} \normiiN{\Amatrix\x}=\norme{\Amatrix:\rn\to\R^N}$}. On the other hand, \scalebox{0.88}{$s_n(\Amatrix) = \underset{\x\in \snOne}{\min} \normiiN{\Amatrix\x}$}.

\subsection{Asymptotic behavior of extreme singular values}
Let \scalebox{0.8}{$\mathfrak{X}\in \rNn,\ N\geq n$}, be a matrix whose i.i.d. entries \scalebox{0.88}{$\Xij$} are mean zero and unit variance random variables. It is well known from the Marchenko--Pastur law~\cite{MarchenKoPastur1967} that most singular values of \scalebox{0.88}{$\mathfrak{X}$} belong to \scalebox{0.88}{$[\sqrt{N}-\sqrt{n}, \sqrt{N}+\sqrt{n}]$}; and as specified in~\cite{RudVersh2010ExtrSingVal}, it is  true that under mild assumptions, all of them lie in the above interval so that asymptotically
{\footnotesize{
\begin{equation}\label{asymptotics21}
s_n(\mathfrak{X})\sim \sqrt{N}-\sqrt{n} \quad\mbox{\normalsize and}\quad s_1(\mathfrak{X})\sim \sqrt{N}+\sqrt{n},
\end{equation}
}}
a universal fact, holding for general distributions. The result was proved for \scalebox{0.88}{$s_1(\mathfrak{X})$} by Geman~\cite{Geman1980limit} and Yin, Bai, and Krishnaiah~\cite{YinBaiKrish1988limit}. It was proved for \scalebox{0.88}{$s_n(\mathfrak{X})$} by Silverstein~\cite{SylversteinSnval1985} when \scalebox{0.88}{$\mathfrak{X}$} is a Gaussian random matrix. A unified treatment of both \scalebox{0.88}{$s_n(\mathfrak{X})$} and \scalebox{0.88}{$s_1(\mathfrak{X})$} was given by Bai and Yin~\cite[Theorems~1 \&~2]{BaiYinSmallCovEig1993} for general distributions through the following theorem.

\begin{theorem}\label{baiyinTheorems}
Let \scalebox{0.88}{$\mathfrak{X}=\mathfrak{X}_{N,n}\in \rNn$} with \scalebox{0.88}{$N\geq n$} be a matrix whose entries \scalebox{0.88}{$\Xij$} are i.i.d. random variables with mean zero, unit variance, and fourth moment finite; and let \scalebox{0.88}{$\mathfrak{S}:=\frac{1}{N}\mathfrak{X}^{\top}\mathfrak{X}$}. Then as \scalebox{0.88}{$N\to \infty$, $n\to \infty$} and the ``aspect ratio'' \scalebox{0.88}{$n/N\to \mathfrak{y}\in (0, 1)$}, the extreme eigenvalues of \scalebox{0.88}{$\mathfrak{S}$} satisfy
{\footnotesize{
\[\lim  \uplambda_{\min}(\mathfrak{S}) = (1-\sqrt{\mathfrak{y}})^2\ \mbox{\normalsize almost surely}\quad\mbox{\normalsize and}\quad \lim  \uplambda_{\max}(\mathfrak{S}) = (1+\sqrt{\mathfrak{y}})^2\ \mbox{\normalsize almost surely}.\]
}}
\end{theorem}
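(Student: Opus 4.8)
Theorem~\ref{baiyinTheorems} is the Bai--Yin theorem, and the plan is to follow the classical moment (trace) method. First I would reduce to the case of uniformly bounded entries: using only $\E{\Xij^{4}}<\infty$, replace $\Xij$ by its truncation at level $\eta_N\sqrt{N}$ (with $\eta_N\downarrow 0$ slowly), recenter and renormalize to keep mean zero and unit variance, and show that the operator-norm perturbation caused by this surgery tends to zero almost surely --- a routine Hilbert--Schmidt/rank estimate combined with Borel--Cantelli, the fourth-moment hypothesis being what controls the truncated tail. After this reduction it suffices to establish the two almost-sure limits for a sample covariance matrix whose entries are (essentially) mean zero, unit variance, and bounded by $o(\sqrt{N})$.

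For the upper edge, I would estimate high moments of $\mathfrak{S}$. For a slowly growing sequence $k=k_N\to\infty$ (e.g.\ $k_N=\lfloor (\log N)^{1/4}\rfloor$, or a small power of $N$), expand
\[
\E{\operatorname{tr}\mathfrak{S}^{k}}=\frac{1}{N^{k}}\sum \E{X_{i_1 j_1}X_{i_1 j_2}X_{i_2 j_2}\cdots X_{i_k j_k}X_{i_k j_1}},
\]
the sum being over all $i_1,\dots,i_k\in[N]$ and $j_1,\dots,j_k\in[n]$, i.e.\ over closed walks of length $2k$ on the bipartite graph with vertex classes $[N]$ and $[n]$. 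Independence and mean zero force every edge of a contributing walk to be traversed at least twice; grouping walks by the isomorphism class of the underlying multigraph and counting, the leading contribution comes from walks whose edge-graph is a tree with each edge used exactly twice, producing the Catalan-type count whose sum is precisely the $k$-th moment $\beta_k$ of the Marchenko--Pastur law $\mu_{\mathfrak{y}}$; all other walk types are lower order, and here the fourth-moment bound is exactly what is needed to discard walks reusing an edge four or more times. This yields $\E{\operatorname{tr}\mathfrak{S}^{k_N}}\le N\,\bigl((1+\sqrt{\mathfrak{y}})^2+o(1)\bigr)^{k_N}$, so Markov's inequality together with Borel--Cantelli gives $\limsup_N \uplambda_{\max}(\mathfrak{S})\le(1+\sqrt{\mathfrak{y}})^2$ almost surely.

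Next I would obtain the matching lower bound for $\uplambda_{\max}$ and the upper bound for $\uplambda_{\min}$ directly from the Marchenko--Pastur theorem: the empirical spectral distribution of $\mathfrak{S}$ converges almost surely weakly to $\mu_{\mathfrak{y}}$, which is supported exactly on $[(1-\sqrt{\mathfrak{y}})^2,(1+\sqrt{\mathfrak{y}})^2]$ (with no atom at the origin since $\mathfrak{y}<1$). Because $\mu_{\mathfrak{y}}$ assigns positive mass to every neighbourhood of each edge of its support, the number of eigenvalues of $\mathfrak{S}$ in such a neighbourhood is eventually of order $N$, whence $\liminf_N \uplambda_{\max}(\mathfrak{S})\ge(1+\sqrt{\mathfrak{y}})^2$ and $\limsup_N \uplambda_{\min}(\mathfrak{S})\le(1-\sqrt{\mathfrak{y}})^2$ almost surely.

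The hard part will be the lower bound $\liminf_N \uplambda_{\min}(\mathfrak{S})\ge(1-\sqrt{\mathfrak{y}})^2$, i.e.\ ruling out eigenvalues straying below the left edge, since naive control of negative moments $\operatorname{tr}\mathfrak{S}^{-k}$ is delicate because of near-singularity. Instead I would run a \emph{centred} moment estimate on $\mathfrak{T}:=\mathfrak{S}-(1+\mathfrak{y})\Ib$: since the midpoint of $[(1-\sqrt{\mathfrak{y}})^2,(1+\sqrt{\mathfrak{y}})^2]$ is $1+\mathfrak{y}$ and its half-width is $2\sqrt{\mathfrak{y}}$, a bound $\E{\operatorname{tr}\mathfrak{T}^{2k_N}}\le n\,\bigl((2\sqrt{\mathfrak{y}})^2+o(1)\bigr)^{k_N}$ would give $\|\mathfrak{T}\|\le 2\sqrt{\mathfrak{y}}+o(1)$ almost surely, simultaneously re-deriving the upper bound of the second step and delivering the desired lower bound on $\uplambda_{\min}$. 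This centred count is the combinatorially heaviest step: one must track the cancellations between the $-(1+\mathfrak{y})\Ib$ contributions and the diagonal of the powers of $\mathfrak{S}$, verify that the surviving leading term is exactly $(2\sqrt{\mathfrak{y}})^{2k}$ up to a polynomial-in-$k$ factor, and again invoke $\E{\Xij^{4}}<\infty$ to kill high-multiplicity walks; Borel--Cantelli then upgrades the moment bound to the stated almost-sure convergence, finishing the proof.
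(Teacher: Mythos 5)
The paper does not prove Theorem~\ref{baiyinTheorems}; it is quoted as an external result (Theorems~1 and~2 of Bai and Yin, 1993), so there is no in-paper argument for your sketch to match against. Judged on its own terms, your roadmap tracks the genuine Bai--Yin proof correctly: the truncation-at-$\eta_N\sqrt{N}$ reduction is indeed how the fourth-moment hypothesis enters, the trace-moment bound for $\uplambda_{\max}$ is the Geman/Yin--Bai--Krishnaiah argument, the soft-edge matching lower bound and the weaker upper bound on $\uplambda_{\min}$ do follow from Marchenko--Pastur exactly as you say, and --- most importantly --- centering at $1+\mathfrak{y}$ and controlling $\norme{\mathfrak{S}-(1+\mathfrak{y})\Ib}\le 2\sqrt{\mathfrak{y}}+o(1)$ via high centered trace moments is precisely the device Bai and Yin use to handle the hard edge, and it is the right instinct because direct control of $\uplambda_{\min}$ through negative moments fails.

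Two places where you are underestimating the labor. First, the truncation--recenter--rescale step is not a ``routine Hilbert--Schmidt/rank estimate'': after truncating you must subtract the (now nonzero) mean and divide by the (now non-unit) standard deviation while keeping the cumulative operator-norm perturbation $o(1)$ almost surely, which requires a maximal inequality and a choice of $\eta_N$ calibrated to the fourth-moment tail, not just Borel--Cantelli on a fixed truncation level. Second, the admissible growth of $k_N$ is not free: the centered walk count for $\E{\operatorname{tr}(\mathfrak{S}-(1+\mathfrak{y})\Ib)^{2k_N}}$ produces error contributions from non-tree and high-multiplicity walks that must be dominated by the leading $(2\sqrt{\mathfrak{y}})^{2k_N}$ term, and that competition fixes the allowed rate (a power of $\log N$, so your $\lfloor(\log N)^{1/4}\rfloor$ is of the right order, but this has to be checked against the actual combinatorial error, and getting it wrong breaks the Borel--Cantelli step). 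These are deferred computations rather than conceptual gaps, but they constitute nearly all of the difficulty of the theorem, so your proposal should be read as a correct high-level outline rather than a proof.
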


It easily follows from Theorem~\ref{baiyinTheorems} that (see also~\cite[Theorem~2.1]{RudVersh2010ExtrSingVal}) when \scalebox{0.88}{$n, N$} grow to infinity while the aspect ratio \scalebox{0.88}{$n/N\to \mathfrak{y}\in (0, 1)$}, then \scalebox{0.88}{$\frac{1}{\sqrt{N}}s_n(\mathfrak{X})\to 1-\sqrt{\mathfrak{y}}$} and \scalebox{0.88}{$\frac{1}{\sqrt{N}}s_1(\mathfrak{X})\to 1+\sqrt{\mathfrak{y}}$}. An analogous result for hashing-like matrices is stated next {\bl and illustrated by Figure~\ref{lab01}}.

%\twocolumn 

% \begin{figure}
% \centering
% \includegraphics[scale=0.5]{figs/sn_hashing_100}
% \includegraphics[scale=0.5]{figs/s1_hashing_100}
% \caption{\small{Graphs  from the simulation of the asymptotic results of Corollary~\ref{CorBaiYin} (applied to $n\times N$ $s$-hashing matrices as detailed in Section~\ref{sec44}). Each shaded region contains the convergence graphs of $100$ sequences of singular values (smallest at top, largest at bottom) of the above matrices. In each region, the convergence of the sequence of averages of the $N$th terms of the above sequences is illustrated by a red curve. For each sequence of matrices considered, the {\it aspect ratio} $n/N$ satisfies $n/N\to \mathfrak{y}:=1/100$ as $n$ and $N$ get larger (while $s/n$ remains constant, approximately). As predicted by theory, the (almost sure) limit of the sequence of smallest singular values is $\frac{1}{\sqrt{\mathfrak{y}}}-1=9$, illustrated by the dotted line. This limit is $\frac{1}{\sqrt{\mathfrak{y}}}+1=11$ for the sequence of largest singular values.}}
% \label{lab0}
% \end{figure}

\begin{figure}
\centering
\includegraphics[scale=0.163]{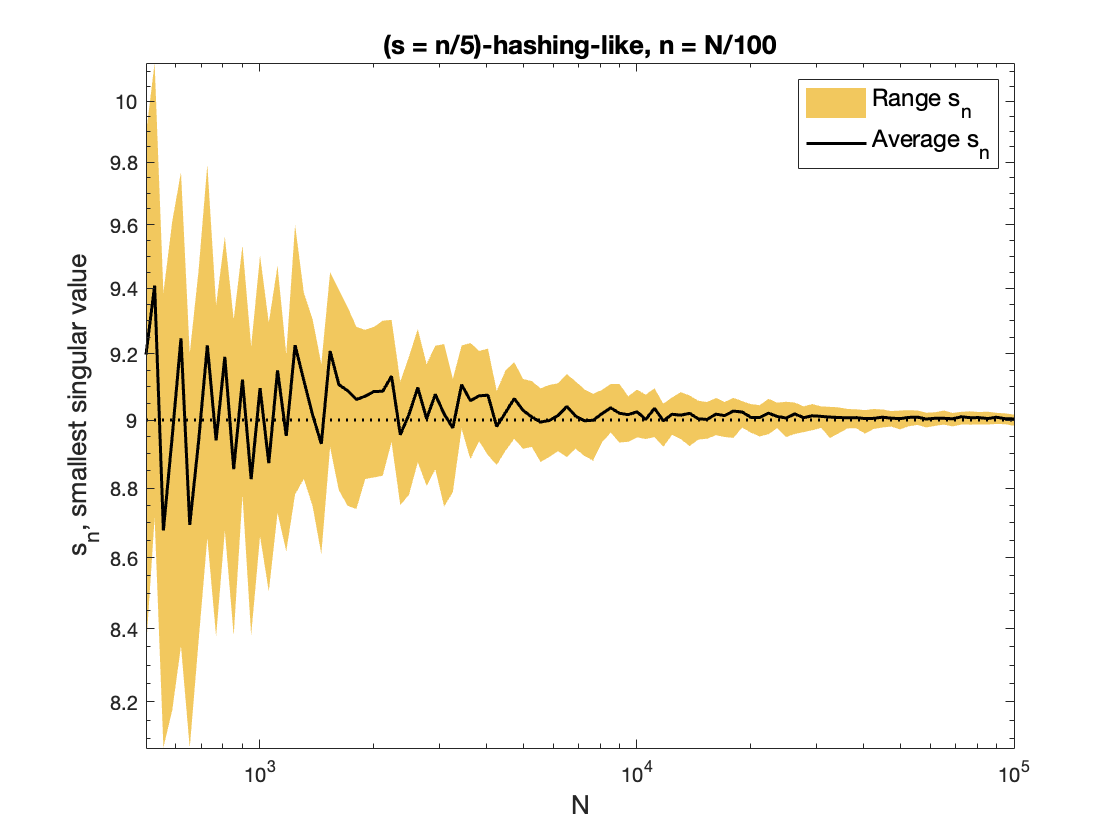}
\includegraphics[scale=0.163]{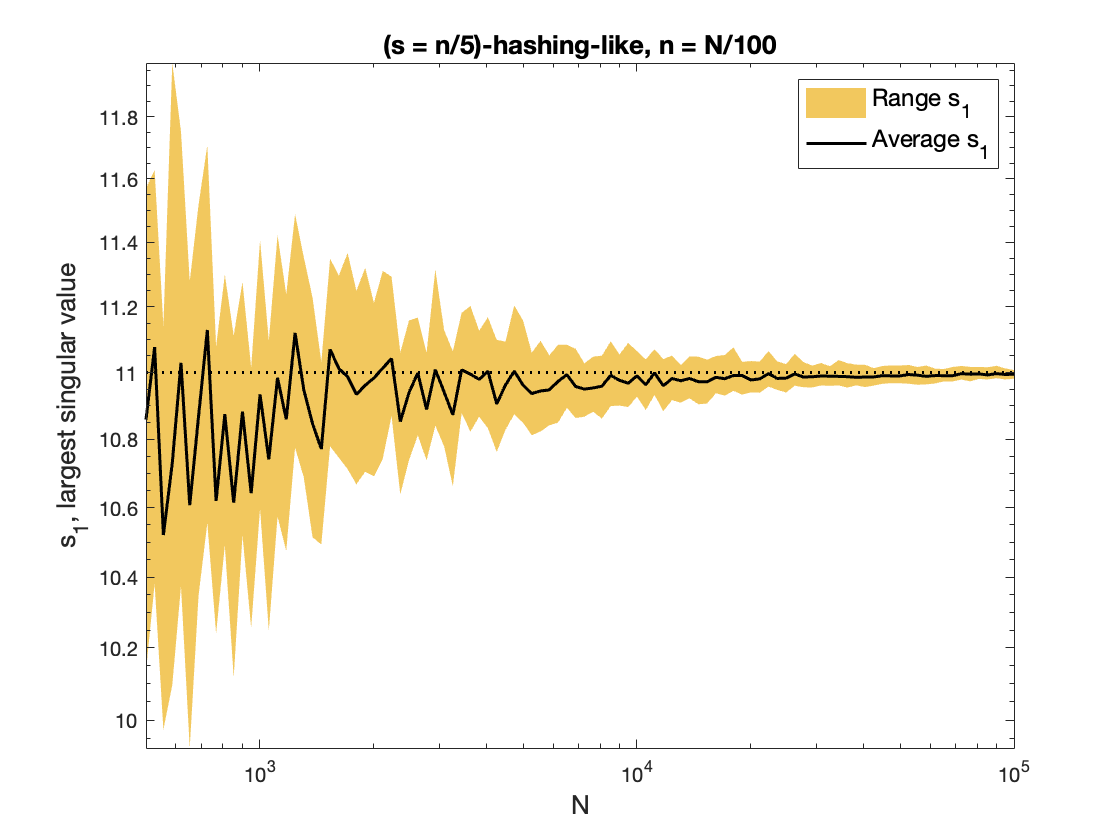}
\caption{\small{Graphs from the simulation of the asymptotic results of Corollary~\ref{CorBaiYin} (detailed in Appendix). Each shaded region contains the convergence graphs of $100$ sequences of singular values (smallest on the left, largest on the right) of $n\times N$ $s$-hashing-like matrices. In each region, the convergence of the sequence of averages of the $N$th terms of the above sequences is illustrated by a black curve. For each sequence of matrices considered, the {\it aspect ratio} $n/N$ satisfies $n/N\to \mathfrak{y}:=1/100$ as $n$ and $N$ get larger (while $s/n$ remains constant, approximately). As predicted by theory, the (almost sure) limit of the sequence of smallest singular values is $\frac{1}{\sqrt{\mathfrak{y}}}-1=9$, illustrated by the dotted line. This limit is $\frac{1}{\sqrt{\mathfrak{y}}}+1=11$ for the sequence of largest singular values.\\ $ $}}
\label{lab01}
\end{figure}
%\onecolumn
\begin{corollary}\label{CorBaiYin}
Consider the random matrix \scalebox{0.85}{$\mathfrak{A}={\bbl \mathfrak{A}_{N,n}}=\frac{1}{\sqrt{s}}(\Yij)_{\underset{1\leq j\leq n}{1\leq i\leq N}}\in \rNn$}\\ given by~\eqref{AhashingLike}.
Then as \scalebox{0.88}{$N\to \infty$}, \scalebox{0.88}{$n\to \infty$}, and \scalebox{0.88}{$n/N\to \mathfrak{y}\in (0, 1)$},
{\footnotesize{
\begin{equation}\label{singLimits}
\lim  s_n(\Amatrix)=\frac{1}{\sqrt{\mathfrak{y}}}-1\quad\mbox{\normalsize almost surely}\qquad\mbox{\normalsize and}\qquad \lim  s_1(\Amatrix)=\frac{1}{\sqrt{\mathfrak{y}}}+1\quad\mbox{\normalsize almost surely},
\end{equation}
}}
{\bbl if} \scalebox{0.88}{$s/n$} remains constant.
\end{corollary}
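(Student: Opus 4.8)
The plan is to reduce the statement to a direct application of the Bai--Yin theorem (Theorem~\ref{baiyinTheorems}), after normalizing the entries of $\mathfrak{A}$ and rescaling its singular values appropriately.

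First I would put $\mathfrak{A}$ in the form required by Theorem~\ref{baiyinTheorems}. Write $c := s/n \in (0,1]$, which by hypothesis stays constant along the sequence. Then each $Y_{ij}$ has the fixed law $\prob{Y_{ij} = \pm 1} = c/2$, $\prob{Y_{ij} = 0} = 1 - c$, so $\var{Y_{ij}} = c$, and the variables $X_{ij} := \sqrt{n/s}\,Y_{ij} = Y_{ij}/\sqrt{c}$ are i.i.d.\ copies of a single random variable bounded by $1/\sqrt{c}$ (hence with all moments finite, in particular finite fourth moment), with $\E{X_{ij}} = 0$ and $\var{X_{ij}} = 1$. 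Setting $\mathfrak{X} := (X_{ij})_{1 \le i \le N,\, 1 \le j \le n} \in \rNn$, the identity $\tfrac{1}{\sqrt{s}}Y_{ij} = \tfrac{1}{\sqrt{n}}\cdot\sqrt{n/s}\,Y_{ij} = \tfrac{1}{\sqrt{n}}X_{ij}$ gives $\mathfrak{A} = \tfrac{1}{\sqrt{n}}\mathfrak{X}$.

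Next I would invoke Theorem~\ref{baiyinTheorems} for $\mathfrak{X}$: since $N, n \to \infty$ with $n/N \to \mathfrak{y} \in (0,1)$, the matrix $\mathfrak{S} = \tfrac{1}{N}\mathfrak{X}^{\top}\mathfrak{X}$ satisfies $\lambda_{\min}(\mathfrak{S}) \to (1 - \sqrt{\mathfrak{y}})^2$ and $\lambda_{\max}(\mathfrak{S}) \to (1 + \sqrt{\mathfrak{y}})^2$ almost surely. Because $\mathfrak{A}^{\top}\mathfrak{A} = \tfrac{1}{n}\mathfrak{X}^{\top}\mathfrak{X} = \tfrac{N}{n}\mathfrak{S}$, the eigenvalues of $\mathfrak{A}^{\top}\mathfrak{A}$ are exactly $N/n$ times those of $\mathfrak{S}$, so by Definition~\ref{singValDef}, $s_n(\mathfrak{A})^2 = \tfrac{N}{n}\lambda_{\min}(\mathfrak{S})$ and $s_1(\mathfrak{A})^2 = \tfrac{N}{n}\lambda_{\max}(\mathfrak{S})$. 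Using that $N/n \to 1/\mathfrak{y}$ is a deterministic limit (so almost sure convergence survives multiplication by it) together with the continuity of $x \mapsto \sqrt{x}$ on $[0,\infty)$, I would conclude
\[
s_n(\mathfrak{A}) \;\longrightarrow\; \sqrt{\tfrac{1}{\mathfrak{y}}(1 - \sqrt{\mathfrak{y}})^2} \;=\; \tfrac{1}{\sqrt{\mathfrak{y}}} - 1, \qquad s_1(\mathfrak{A}) \;\longrightarrow\; \sqrt{\tfrac{1}{\mathfrak{y}}(1 + \sqrt{\mathfrak{y}})^2} \;=\; \tfrac{1}{\sqrt{\mathfrak{y}}} + 1
\]
almost surely, where in the first limit the square root is taken with the nonnegative sign because $0 < \mathfrak{y} < 1$ forces $\tfrac{1}{\sqrt{\mathfrak{y}}} - 1 > 0$.

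The computation itself is essentially bookkeeping; the one point that genuinely needs care is verifying that Theorem~\ref{baiyinTheorems} applies, i.e., that the $X_{ij}$ form a \emph{single} array of i.i.d.\ variables whose common distribution does not drift with $n$. This is exactly what the hypothesis ``$s/n$ remains constant'' buys: if $s/n$ were allowed to vary, the law of $X_{ij}$ would change with $n$ and one would instead need a triangular-array version of the Bai--Yin result. The finite-fourth-moment hypothesis is immediate since each $X_{ij}$ is bounded. I would also note that this asymptotic identification of $s_1(\mathfrak{A})$ is consistent with the non-asymptotic bounds established later in this section, and that the Bai--Yin route is worth keeping because it pins down both edges simultaneously with matching constants.
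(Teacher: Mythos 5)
Your proof is correct and follows essentially the same route as the paper: normalize the entries to $X_{ij}=\sqrt{n/s}\,Y_{ij}$ so that $\mathfrak{A}=\frac{1}{\sqrt{n}}\mathfrak{X}$, note that $s/n$ fixed makes the law of $X_{ij}$ independent of $n$, apply the Bai--Yin Theorem~\ref{baiyinTheorems} to $\mathfrak{S}=\frac{1}{N}\mathfrak{X}^{\top}\mathfrak{X}$, and rescale by $N/n\to 1/\mathfrak{y}$. Your remark about why the ``$s/n$ constant'' hypothesis is what makes Theorem~\ref{baiyinTheorems} applicable (rather than needing a triangular-array version) mirrors the paper's own justification.
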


\begin{proof}
We observe that  \scalebox{0.88}{$\Amatrix=\frac{1}{\sqrt{n}}\left(\sqrt{\frac{n}{s}}\Yij\right)_{\underset{1\leq j\leq n}{1\leq i\leq N}}=\frac{1}{\sqrt{n}}(\Xij)_{\underset{1\leq j\leq n}{1\leq i\leq N}}$}, with \scalebox{0.86}{$\Xij\!~\sim \bbmu_X(2n/s)$}. Let \scalebox{0.88}{$\mathfrak{X}:=(\Xij)_{\underset{1\leq j\leq n}{1\leq i\leq N}}=\sqrt{n}\Amatrix$}, and assume that $s/n$ is fixed, say, \scalebox{0.88}{$s/n=2/q$} for some fixed \scalebox{0.88}{$q\geq 2$}, thus preventing the distribution \scalebox{0.88}{$\bbmu_X(q)$} of the entries \scalebox{0.88}{$\Xij$} from being dependent of \scalebox{0.88}{$n$} and \scalebox{0.88}{$s$}. Since the \scalebox{0.88}{$\Xij$} are mean zero, unit variance, with fourth moment finite,  it follows from Theorem~\ref{baiyinTheorems} with \scalebox{0.8}{$\mathfrak{S}:=\left(\frac{\ds{\mathfrak{X}}}{\sqrt{N}}\right)^{\top}\left(\frac{\ds{\mathfrak{X}}}{\sqrt{N}}\right)$} that \scalebox{0.88}{$\lim \uplambda_{\min}(\mathfrak{S}) = \lim \left(s_n(\mathfrak{X}/\sqrt{N})\right)^2= \lim \left(\sqrt{\frac{n}{N}}s_n(\Amatrix)\right)^2=(1-\sqrt{\mathfrak{y}})^2$} almost surely, whence  almost surely, \scalebox{0.88}{$\lim  s_n(\Amatrix)=\frac{1}{\sqrt{\mathfrak{y}}}(1-\sqrt{\mathfrak{y}})$}. The result for \scalebox{0.88}{$s_1(\Amatrix)$} follows by similar arguments.
\end{proof}

{\bl
\begin{corollary}\label{CorBaiYin2}
Let  \scalebox{0.88}{$\mathfrak{A}=\mathfrak{A}_{N,n}=\frac{1}{\sqrt{n}}(\Xij)_{\underset{1\leq j\leq n}{1\leq i\leq N}}\in \rNn$} be given by~\eqref{AMatrixEnsemble}. Then as \scalebox{0.87}{$N\to \infty$}, \scalebox{0.88}{$n\to \infty$}, and \scalebox{0.88}{$n/N\to \mathfrak{y}\in (0, 1)$}, \scalebox{0.88}{$s_n(\Amatrix)$} and \scalebox{0.88}{$s_1(\Amatrix)$} satisfy~\eqref{singLimits}.
\end{corollary}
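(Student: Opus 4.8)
The plan is to reduce the statement to a direct application of the Bai--Yin law (Theorem~\ref{baiyinTheorems}), essentially repeating the proof of Corollary~\ref{CorBaiYin} but without needing to impose that $s/n$ stay constant: here $q\ge 2$ is already a fixed parameter, so the law $\bbmu_X(q)$ of the entries of $\sqrt n\,\Amatrix$ does not vary with $n$ or $N$ in the first place.

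First I would record the relevant moments of $X\sim\bbmu_X(q)$. By symmetry $\E{X}=0$; since $\hat q^2=q/2$, $\E{X^2}=\tfrac2q\hat q^2=1$; and $\E{X^4}=\tfrac2q\hat q^4=\tfrac q2<\infty$. Hence the entries $\Xij$ of $\mathfrak{X}:=(\Xij)_{1\le i\le N,\,1\le j\le n}=\sqrt n\,\Amatrix$ are i.i.d., mean zero, unit variance, with finite fourth moment---exactly the hypothesis of Theorem~\ref{baiyinTheorems}. Next, set $\mathfrak{S}:=\tfrac1N\mathfrak{X}^{\top}\mathfrak{X}=\big(\tfrac{\mathfrak{X}}{\sqrt N}\big)^{\top}\big(\tfrac{\mathfrak{X}}{\sqrt N}\big)$; since $\mathfrak{X}/\sqrt N=\sqrt{n/N}\,\Amatrix$, Definition~\ref{singValDef} gives $\uplambda_{\min}(\mathfrak{S})=\big(s_n(\mathfrak{X}/\sqrt N)\big)^2=\tfrac nN\big(s_n(\Amatrix)\big)^2$ and $\uplambda_{\max}(\mathfrak{S})=\tfrac nN\big(s_1(\Amatrix)\big)^2$. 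Applying Theorem~\ref{baiyinTheorems} as $N,n\to\infty$ with $n/N\to\mathfrak{y}\in(0,1)$ yields $\tfrac nN\big(s_n(\Amatrix)\big)^2\to(1-\sqrt{\mathfrak{y}})^2$ and $\tfrac nN\big(s_1(\Amatrix)\big)^2\to(1+\sqrt{\mathfrak{y}})^2$ almost surely; dividing by $\mathfrak{y}$ and taking square roots (legitimate since $1-\sqrt{\mathfrak{y}}>0$ and $t\mapsto\sqrt t$ is continuous) gives $s_n(\Amatrix)\to\tfrac{1}{\sqrt{\mathfrak{y}}}(1-\sqrt{\mathfrak{y}})=\tfrac{1}{\sqrt{\mathfrak{y}}}-1$ and $s_1(\Amatrix)\to\tfrac{1}{\sqrt{\mathfrak{y}}}(1+\sqrt{\mathfrak{y}})=\tfrac{1}{\sqrt{\mathfrak{y}}}+1$ almost surely, which is precisely~\eqref{singLimits}.

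There is no serious obstacle here: the only items requiring any care are checking finiteness of the fourth moment---immediate from the explicit three-atom law~\eqref{muXqmuYq}---and keeping the $\sqrt n$, $\sqrt N$ rescalings straight when passing from the Gram matrix $\mathfrak{S}$ back to $s_n(\Amatrix)$ and $s_1(\Amatrix)$. The statement is genuinely a corollary, differing from Corollary~\ref{CorBaiYin} only in that here it is the fixedness of $q$, rather than of the ratio $s/n$, that decouples the entry distribution from the growing dimensions.
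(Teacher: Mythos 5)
Your proof is correct and follows exactly the paper's route: the paper declares this corollary to ``trivially follow'' from Corollary~\ref{CorBaiYin}, whose proof likewise sets $\mathfrak{X}=\sqrt{n}\Amatrix$, verifies the zero-mean/unit-variance/finite-fourth-moment hypotheses of the Bai--Yin Theorem~\ref{baiyinTheorems}, and rescales the Gram matrix eigenvalue limits to obtain~\eqref{singLimits}. Your observation that the fixedness of $q$ automatically decouples $\bbmu_X(q)$ from the growing dimensions is precisely the reason the paper regards this case as even more immediate than Corollary~\ref{CorBaiYin}.
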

\begin{proof}
The proof trivially follows from that of Corollary~\ref{CorBaiYin}.
\end{proof}

\begin{figure}[ht!]
\centering
\includegraphics[scale=0.43]{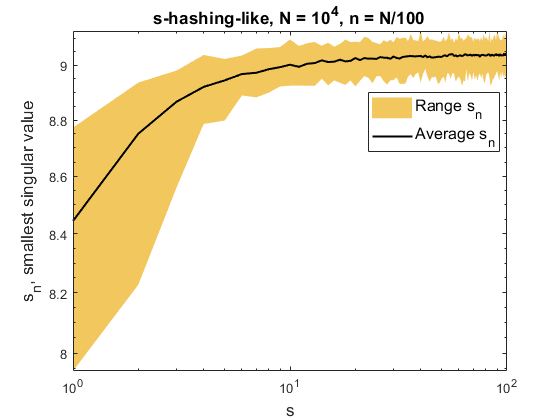}
\includegraphics[scale=0.43]{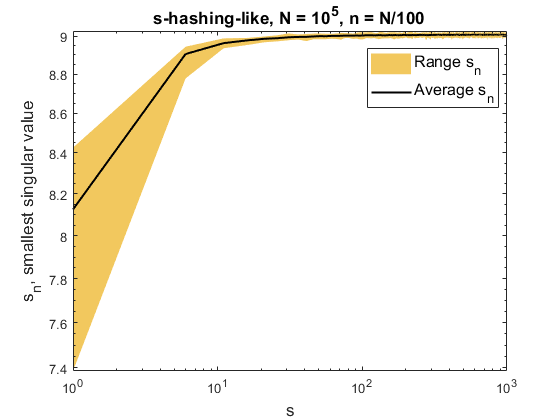}
\includegraphics[scale=0.43]{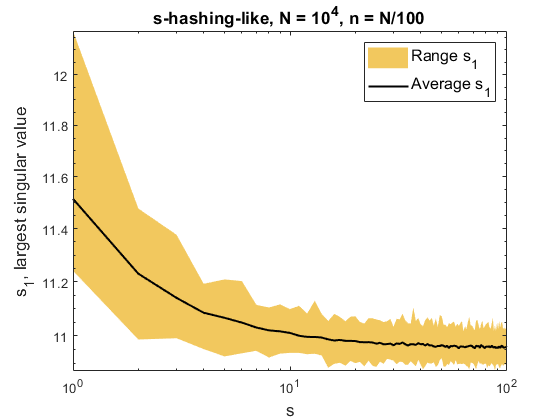}
\includegraphics[scale=0.43]{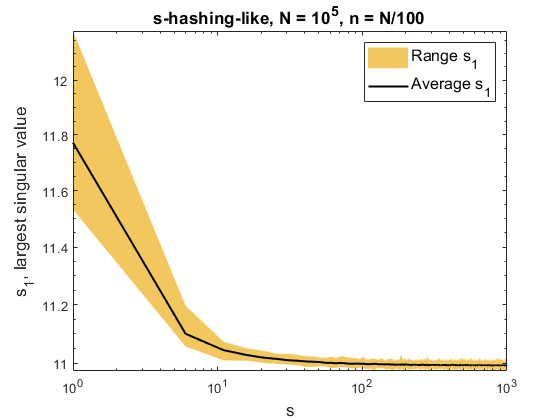}
\caption{\small{\bbl Graphs illustrating the behavior of the extreme singular values of $s$-hashing-like matrices with fixed dimensions, with respect to~$s$. Each shaded region contains the convergence graphs of $100$ sequences of singular values (smallest at the top, largest at the bottom) of $n\times N$ matrices, for various dimensions ($N=10^4$ on the left and $N=10^5$ on the right, with $n/N=1/100:=\mathfrak{y}$). In each region, the behavior of the sequence of averages of the $s$th terms of the above sequences is illustrated by a black curve. The increase of~$s$ reduces the sparsity level so that at the limit $s=n$, the matrix entries $X_{ij}/\sqrt{n}$ are given by~\eqref{AMatrixEnsemble} with $X_{ij}\sim \bbmu_X(2)$. Thus, Corollary~\ref{CorBaiYin2} predicts that the limiting extreme singular values approach $\frac{1}{\sqrt{\mathfrak{y}}}-1=9$ (smallest) and $\frac{1}{\sqrt{\mathfrak{y}}}+1=11$ (largest). On the other hand as expected, the smaller~$s$ is, the more singular the matrices, the smaller the smallest singular value, and the larger the largest singular value.}}
\label{lab10}
\end{figure}

{\bbl The behavior of the extreme singular values of $s$-hashing-like matrices with fixed dimensions $n$ and $N$, with respect to~$s$ is depicted in Figure~\ref{lab10}, illustrating in particular the singularity of the matrices with respect to the sparsity parameter~$s$ and the connection of the limiting extreme singular values to the results of Corollary~\ref{CorBaiYin2}.
}

%For each sequence of matrices considered, the {\it aspect ratio} $n/N$ satisfies $n/N\to \mathfrak{y}:=1/100$ as $n$ and $N$ get larger (while $s/n$ remains constant, approximately). As predicted by theory, the (almost sure) limit of the sequence of smallest singular values is $\frac{1}{\sqrt{\mathfrak{y}}}-1=9$, illustrated by the dotted line. This limit is $\frac{1}{\sqrt{\mathfrak{y}}}+1=11$ for the sequence of largest singular values.\\ $ $
The remainder of this section is devoted to showing that under the conditions of Corollaries~\ref{CorBaiYin} and~\ref{CorBaiYin2}, \scalebox{0.88}{$s_n(\Amatrix)$} and \scalebox{0.88}{$s_1(\Amatrix)$} converge respectively, in distribution to the Tracy--Widom \scalebox{0.88}{$TW_1$} law (recalled below), after proper rescalings, making use of existing results due to P\'ech\'e~\cite{Peche2009Universality} and Feldheim and Sodin~\cite{felSodTracyWid2010}. As recalled in~\cite{felSodTracyWid2010}, it has been long conjectured that some of the known asymptotic statistical properties of large matrices with Gaussian entries such as those related to the local statistics of the eigenvalues at the edge of the spectrum, should be valid, in particular, for more general random matrices with independent entries. One of the first rigorous results of this kind (which is part of a phenomenon referred to as {\it universality} in the physics literature) due to Soshnikov~\cite{Soshnikov1999UnivSpectr} (see also~\cite[Theorem I.1.3]{felSodTracyWid2010}), was later extended in~\cite{SoshnikovTracyWid2002} to the largest eigenvalues of sample covariance matrices \scalebox{0.88}{$\mathfrak{X}^{\top}\mathfrak{X}$} under some restrictions on the
dimensions of the matrices that were then disposed of by P\'ech\'e~\cite{Peche2009Universality} in Theorem~\ref{soshpecheLarge} below. Let Ai denote the standard Airy function (see, e.g.,~\cite[Definition~I.4.5]{felSodTracyWid2010}) uniquely defined by \scalebox{0.88}{$\text{Ai}''(x)=x\text{Ai}(x)$} with \scalebox{0.88}{$\text{Ai}(x)\sim\frac{1}{2\sqrt{\pi}x^{1/4}}\exp\left(-\frac{2}{3}x^{3/2}\right)$} as \scalebox{0.88}{$x\to \infty$}. We first recall the {\it Gaussian Orthogonal Ensemble} (GOE, see, e.g.,~\cite[Example~I.3.2]{felSodTracyWid2010}) Tracy--Widom distribution \scalebox{0.88}{$TW_1$} (see, e.g.,~\cite[Definition~1.1]{Peche2009Universality}), defined by the cumulative distribution function \scalebox{0.88}{$F_1(x):= \exp\accolade{\int_x^{\infty} -\frac{q(t)}{2} + \frac{(x-t)}{2}q^2(t) dt}$}, where $q(\cdot)$ is the so-called Hastings--McLeod solution of the Painlev\'e~II differential equation \scalebox{0.88}{$q''(x) = xq(x)+2q^3(x)$} with boundary condition \scalebox{0.88}{$q(x)\sim\text{Ai}(x)$} as \scalebox{0.88}{$x\to\infty$}.

\begin{theorem}\label{soshpecheLarge}(Soshnikov~\cite{SoshnikovTracyWid2002}, P\'ech\'e~\cite{Peche2009Universality}, \cite[Theorem I.1.2]{felSodTracyWid2010})
Let \scalebox{0.88}{$\accolade{\mathfrak{X}_{N,n}}$}, with \scalebox{0.88}{$\mathfrak{X}_{N,n}\in \rNn$}, \scalebox{0.88}{$N\geq n$}, be a sequence of random matrices whose independent entries have a symmetric distribution, sub-Gaussian tails and unit variance. Denote by \scalebox{0.88}{$\lambda_1:=\lambda_{\max}\left({\mathfrak{X}_{N,n}}^{\top}\mathfrak{X}_{N,n}\right)$} the largest eigenvalue of \scalebox{0.88}{${\mathfrak{X}_{N,n}}^{\top}\mathfrak{X}_{N,n}$}. Then as \scalebox{0.88}{$N\to \infty$}, \scalebox{0.88}{$n\to \infty$} and \scalebox{0.88}{$n/N\to \mathfrak{y}\in (0, 1)$}, \scalebox{0.88}{$\xi_{N,n}:=\frac{\lambda_1-\left(N^{1/2}+n^{1/2}\right)^2}{\left(N^{1/2}+n^{1/2}\right)\left(N^{-1/2}+n^{-1/2}\right)^{1/3}}$} converges in distribution to the Tracy--Widom \scalebox{0.88}{$TW_1$} law.
\end{theorem}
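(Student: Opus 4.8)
The plan is to prove this by the method of high moments, in the Soshnikov--P\'ech\'e tradition. Set $W:=\mathfrak{X}_{N,n}^{\top}\mathfrak{X}_{N,n}\in\R^{n\times n}$, a Wishart-type matrix with non-Gaussian entries, so that $\lambda_1=\lambda_{\max}(W)$; the affine map defining $\xi_{N,n}$ is exactly the one that sends the Marchenko--Pastur soft edge $(\sqrt N+\sqrt n)^2$ to $0$ on the natural fluctuation scale, which is of order $N^{2/3}$ when $n/N\to\mathfrak{y}\in(0,1)$. The first step is to reduce the distributional claim to a moment estimate: by computing $\E{\operatorname{tr}(W^{k})}$ for all $k$ up to order $N^{2/3}$ and showing these asymptotically equal the same quantities for a \emph{real Gaussian} (Wishart/Laguerre Orthogonal Ensemble) matrix of the same dimensions, one recovers convergence of all moments of the rescaled $\xi_{N,n}$ to those of the $TW_1$ law; since $TW_1$ is determined by its moments and the rescaled largest eigenvalue is classically known (Johnstone) to converge to $TW_1$ in the Gaussian case, this yields the asserted convergence in distribution.

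The second step is the combinatorial expansion. Writing $\operatorname{tr}(W^k)=\sum\E{\prod\Xij}$ as a sum over closed walks of length $2k$ in the complete bipartite graph on $\intbracket{1,N}\sqcup\intbracket{1,n}$, the symmetry and centering of the entries annihilate every walk traversing some edge an odd number of times. The dominant walks traverse each of their $k$ edges exactly twice; weighted by unit variance, their count reproduces the Marchenko--Pastur edge and, after the $N^{2/3}$ rescaling, the Airy-type Tracy--Widom correction. One must then show that walks reusing an edge four or more times are negligible at this scale, and this is precisely where the sub-Gaussian hypothesis enters, via bounds of the form $\left(\E{\abs{\Xij}^{2p}}\right)^{1/(2p)}\leq C\sqrt{2p}$ exactly as in Proposition~\ref{Yproporties}-{\it (iii)}: each extra reuse of an edge costs only a factor of order $p$, not an uncontrolled moment, so a count of such ``defective'' walks shows their aggregate contribution is lower order when $k$ is of order $N^{2/3}$. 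Matching the surviving sums term by term with the Gaussian model -- where they depend only on the first two moments -- then closes the estimate.

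The main obstacle is the sharp path-counting near the spectral edge at the critical length $k\asymp N^{2/3}$, together with the extension to an arbitrary aspect ratio $\mathfrak{y}\in(0,1)$. Soshnikov's original analysis was essentially confined to the near-square regime, and handling general $\mathfrak{y}$ -- P\'ech\'e's contribution -- requires tracking the two distinct vertex-set sizes $N$ and $n$ throughout the estimates and controlling how the ``$N$-steps'' and ``$n$-steps'' interleave in each walk and in the resulting generating functions. An alternative route would be a Lindeberg/Green-function swapping comparison with the Gaussian model (in the spirit of Tao--Vu or Erd\H{o}s--Yau--Yin), but for symmetrically distributed sub-Gaussian entries the moment method is the historically established and most self-contained one.
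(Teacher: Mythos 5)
This theorem is stated in the paper as an imported result, attributed to Soshnikov~\cite{SoshnikovTracyWid2002}, P\'ech\'e~\cite{Peche2009Universality}, and Feldheim--Sodin~\cite[Theorem~I.1.2]{felSodTracyWid2010}, and the paper supplies no proof of it; it is invoked as a black box to deduce Corollary~\ref{s1Distribution}. There is therefore no ``paper's own proof'' against which to compare you. As a reconstruction of the literature argument, your sketch correctly identifies the structural components of the Soshnikov--P\'ech\'e high-moment method: the expansion of $\E{\operatorname{tr}(W^k)}$ as a weighted sum over closed even walks on the bipartite graph with vertex set $\intbracket{1,N}\sqcup\intbracket{1,n}$, the annihilation of odd-multiplicity edges by symmetry and centering, the dominance of edge-doubling walks that reproduce the Marchenko--Pastur edge, the role of sub-Gaussian moment bounds (as in Proposition~\ref{Yproporties}-\emph{(iii)}) in taming walks that reuse an edge four or more times at the critical scale $k\asymp N^{2/3}$, and the extension to general aspect ratio $\mathfrak{y}\in(0,1)$ as P\'ech\'e's contribution.

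Two caveats are worth making explicit. First, the sketch is a roadmap, not a proof: the step that actually carries the weight --- the sharp asymptotic enumeration of paths near the edge at length $k\asymp N^{2/3}$ and the identification of the resulting generating function with the Airy/$TW_1$ object --- is named but not executed, and that is exactly where the content of the cited papers lives. Second, the reduction you state is logically imprecise. Controlling $\E{\operatorname{tr}(W^k)}$ for $k\asymp N^{2/3}$ does not directly ``recover convergence of all moments of the rescaled $\xi_{N,n}$''; the trace $\operatorname{tr}(W^k)$ is a Laplace-transform-type smoothing of the empirical spectral measure, and passing from trace asymptotics to the fluctuation law of the single largest eigenvalue requires joint moments of several traces, tightness at the edge, and a deconvolution step --- indeed the cited works establish convergence of the joint law of the top several eigenvalues, not merely of $\lambda_1$. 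It is also worth noting that the third cited source, Feldheim--Sodin, proves the result by a different and more structured route (a Chebyshev-polynomial/transfer-matrix decomposition of $\operatorname{tr}(W^k)$) that simultaneously yields the hard-edge statement~\cite[Theorem~I.1.1]{felSodTracyWid2010} used for Corollary~\ref{snDistribution}; your sketch describes the Soshnikov--P\'ech\'e route only. So the outline is a fair account of one of the standard strategies, but it should be phrased as a reduction to a known hard estimate rather than as a proof.
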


The following result {\bl illustrated by Figure~\ref{lab02}} provides the exact asymptotic distribution of $s_1(\Amatrix)$ after a proper rescaling. 
\begin{figure}
\centering
\includegraphics[scale=0.43]{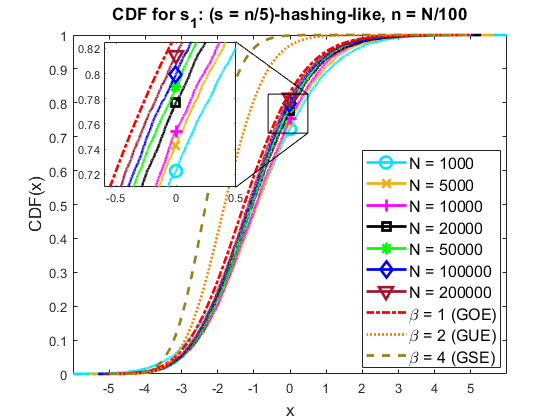}
\includegraphics[scale=0.43]{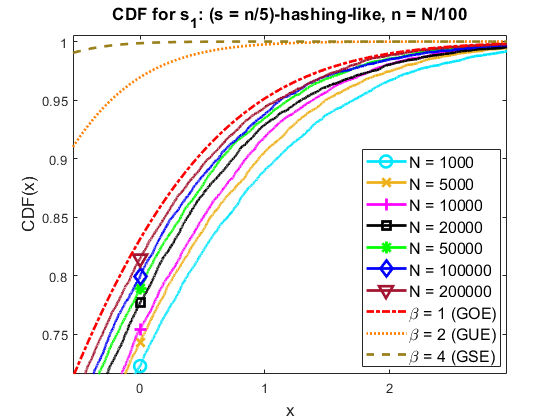}
\caption{\small{Cumulative distribution functions (CDFs) illustrating the result of Corollary~\ref{s1Distribution}, that is, the convergence in distribution to the GOE Tracy--Widom $TW_1$ law $(\beta=1)$, of the rescaled largest singular values of $n\times N$ $s$-hashing-like matrices. For each $N$, the curve of the empirical CDF is generated using $10^4$ matrix samples. The larger $N, n$, the closer the corresponding curve is to that of $TW_1$ and not those of the Gaussian unitary and the Gaussian symplectic ensembles, denoted by GUE $(\beta=2)$ and GSE $(\beta=4)$, respectively.}}
\label{lab02}
\end{figure}
\begin{corollary}\label{s1Distribution}
Under the conditions of any of Corollaries~\ref{CorBaiYin} and~\ref{CorBaiYin2}, the following holds in distribution: \\ \scalebox{0.86}{$2\sqrt{\mathfrak{y}}\left(1+1/\sqrt{\mathfrak{y}}\right)^{-1/3}N^{2/3}\left(s_1(\Amatrix)-\left(1+\sqrt{N/n}\right)\right)\to\  TW_1$} as \scalebox{0.88}{$N\to \infty$}, \scalebox{0.88}{$n\to \infty$} and \scalebox{0.88}{$n/N\to \mathfrak{y}\in (0, 1)$}.

\end{corollary}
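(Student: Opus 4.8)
The plan is to deduce the claim from Theorem~\ref{soshpecheLarge} (Soshnikov--P\'ech\'e) by a Slutsky-type argument, using the Bai--Yin law (Theorem~\ref{baiyinTheorems}, via Corollary~\ref{CorBaiYin2}) to dispose of a lower-order correction factor. First I would write $\Amatrix=\frac{1}{\sqrt{n}}\mathfrak{X}$ with $\mathfrak{X}=(\Xij)_{1\le i\le N,\,1\le j\le n}$, $\Xij\sim\bbmu_X(q)$ i.i.d.\ and $q$ fixed (directly for~\eqref{AMatrixEnsemble}, or through the reduction $q=2n/s$ with $s/n$ constant used in the proof of Corollary~\ref{CorBaiYin}). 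The $\Xij$ have a symmetric distribution, unit variance, and sub-Gaussian tails (Proposition~\ref{Yproporties}-{\it (ii)}), so the hypotheses of Theorem~\ref{soshpecheLarge} are met. With $\lambda_1:=\lambda_{\max}(\mathfrak{X}^{\top}\mathfrak{X})$ we have $s_1(\Amatrix)=\frac{1}{\sqrt{n}}\sqrt{\lambda_1}=\frac{1}{\sqrt{n}}s_1(\mathfrak{X})$.

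Next I would carry out the algebraic rearrangement. Using $1+\sqrt{N/n}=(\sqrt{n}+\sqrt{N})/\sqrt{n}$ and writing the difference of square roots over a common denominator,
\begin{equation*}
\sqrt{n}\bigl(s_1(\Amatrix)-(1+\sqrt{N/n})\bigr)=\sqrt{\lambda_1}-\bigl(\sqrt{N}+\sqrt{n}\bigr)=\frac{\lambda_1-\bigl(\sqrt{N}+\sqrt{n}\bigr)^2}{\sqrt{\lambda_1}+\sqrt{N}+\sqrt{n}}.
\end{equation*}
Substituting the identity $\lambda_1-(\sqrt{N}+\sqrt{n})^2=\xi_{N,n}\,(\sqrt{N}+\sqrt{n})\bigl(N^{-1/2}+n^{-1/2}\bigr)^{1/3}$ coming from the definition of $\xi_{N,n}$ in Theorem~\ref{soshpecheLarge}, and then multiplying both sides by $2\sqrt{\mathfrak{y}}\bigl(1+1/\sqrt{\mathfrak{y}}\bigr)^{-1/3}N^{2/3}/\sqrt{n}$, one obtains
\begin{equation*}
2\sqrt{\mathfrak{y}}\bigl(1+1/\sqrt{\mathfrak{y}}\bigr)^{-1/3}N^{2/3}\bigl(s_1(\Amatrix)-(1+\sqrt{N/n})\bigr)=\xi_{N,n}\,A_{N,n}\,\frac{\sqrt{N}+\sqrt{n}}{\sqrt{\lambda_1}+\sqrt{N}+\sqrt{n}},
\end{equation*}
where $A_{N,n}:=2\sqrt{\mathfrak{y}}\bigl(1+1/\sqrt{\mathfrak{y}}\bigr)^{-1/3}N^{2/3}\bigl(N^{-1/2}+n^{-1/2}\bigr)^{1/3}/\sqrt{n}$.

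Then I would identify the limits of the two correction factors. Since $N^{-1/2}+n^{-1/2}=N^{-1/2}(1+\sqrt{N/n})$, the powers of $N$ and $n$ combine and $A_{N,n}=2\sqrt{\mathfrak{y}}\bigl(1+1/\sqrt{\mathfrak{y}}\bigr)^{-1/3}\bigl(1+\sqrt{N/n}\bigr)^{1/3}\sqrt{N/n}\to2$ as $n/N\to\mathfrak{y}$. On the other hand, Corollary~\ref{CorBaiYin2} (equivalently Theorem~\ref{baiyinTheorems}) gives $s_1(\mathfrak{X})/(\sqrt{N}+\sqrt{n})\to1$ almost surely, hence $(\sqrt{N}+\sqrt{n})/(\sqrt{\lambda_1}+\sqrt{N}+\sqrt{n})\to\tfrac12$ almost surely, so the product $A_{N,n}\,(\sqrt{N}+\sqrt{n})/(\sqrt{\lambda_1}+\sqrt{N}+\sqrt{n})\to1$. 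Combining this with $\xi_{N,n}\to TW_1$ in distribution (Theorem~\ref{soshpecheLarge}) and Slutsky's theorem yields the stated convergence.

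The main obstacle I anticipate is not a single hard estimate but the bookkeeping: checking that $\bbmu_X(q)$ genuinely meets the hypotheses of Theorem~\ref{soshpecheLarge} (symmetry is immediate, unit variance is by construction, the sub-Gaussian tails come from Proposition~\ref{Yproporties}, but one must also keep $q$ fixed so the entry law does not drift with $n$), and verifying that the ``natural'' normalization $\sqrt{n}\,(N^{-1/2}+n^{-1/2})^{-1/3}$ emerging from $\xi_{N,n}$ is asymptotically equivalent to the stated $2\sqrt{\mathfrak{y}}(1+1/\sqrt{\mathfrak{y}})^{-1/3}N^{2/3}$ — precisely what the extra Slutsky step (absorbing $A_{N,n}\to2$ together with the factor $\tfrac12$ from $\sqrt{\lambda_1}\sim\sqrt{N}+\sqrt{n}$) accomplishes. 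Everything else is routine.
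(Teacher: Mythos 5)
Your proof is correct and takes essentially the same approach as the paper: both reduce to Theorem~\ref{soshpecheLarge} via a Slutsky argument, using the Bai--Yin almost-sure limit of $s_1(\Amatrix)$ to dispose of the correction factor. The paper factors $\xi_{N,n}=\xi_{N,n}^{(1)}\xi_{N,n}^{(2)}$ with $\xi_{N,n}^{(1)}\to\tilde{\mathfrak{y}}$ a.s., which is algebraically identical to your decomposition (the denominator $\sqrt{\lambda_1}+\sqrt{N}+\sqrt{n}=\sqrt{n}(s_1(\Amatrix)+1+\sqrt{N/n})$ is exactly what appears in $\xi_{N,n}^{(1)}$); you merely split the correction factor into two pieces ($A_{N,n}\to2$ and the ratio $\to\tfrac12$) where the paper bundles them.
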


\begin{proof}
It is easy to observe that the entries of \scalebox{0.88}{$\mathfrak{X}_{N,n}:=\sqrt{n}\Amatrix$} trivially satisfy the conditions of Theorem~\ref{soshpecheLarge}, and that \scalebox{0.88}{$\lambda_1:=\lambda_{\max}\left({\mathfrak{X}_{N,n}}^{\top}\mathfrak{X}_{N,n}\right)=n(s_1(\Amatrix))^2$}. Moreover,
{\footnotesize{
\begin{equation*}
\xi_{N,n} = \frac{n(s_1(\Amatrix))^2-\left(N^{1/2}+n^{1/2}\right)^2}{\left(N^{1/2}+n^{1/2}\right)\left(N^{-1/2}+n^{-1/2}\right)^{1/3}}=\frac{(s_1(\Amatrix))^2-\left(1+\sqrt{N/n}\right)^2}{\sqrt{N/n}\left(1+\sqrt{N/n}\right)^{4/3}}N^{2/3}=: \xi_{N,n}^{(1)}\xi_{N,n}^{(2)},
\end{equation*}
}}
where \scalebox{0.88}{$\xi_{N,n}^{(1)}:=\frac{s_1(\Amatrix)+\left(1+\sqrt{N/n}\right)}{\sqrt{N/n}\left(1+\sqrt{N/n}\right)^{4/3}}$} and \scalebox{0.88}{$\xi_{N,n}^{(2)}:=N^{2/3}\left(s_1(\Amatrix)-\left(1+\sqrt{N/n}\right)\right)$}. It follows from any of Corollaries~\ref{CorBaiYin} and~\ref{CorBaiYin2} that almost surely, \scalebox{0.88}{$\xi_{N,n}^{(1)}\to 2\sqrt{\mathfrak{y}}\left(1+1/\sqrt{\mathfrak{y}}\right)^{-1/3}=:\tilde{\mathfrak{y}}$}, that is, to a constant, as \scalebox{0.88}{$N\to \infty$, $n\to \infty$} and \scalebox{0.88}{$n/N\to \mathfrak{y}$}. On the other hand, it follows from Theorem~\ref{soshpecheLarge} that in distribution, \scalebox{0.88}{$\xi_{N,n}\to \xi\sim TW_1$}, whence in distribution (see, e.g.,~\cite[Exercise~3.2.14]{durrett2019probability}) \scalebox{0.88}{$\ \xi_{N,n}^{(2)} = \xi_{N,n}\left(\xi_{N,n}^{(1)}\right)^{-1}\to {\tilde{\mathfrak{y}}}^{-1}\xi$}, which completes the proof.
\end{proof}

To derive the asymptotic distribution of the rescaled smallest singular value (illustrated by Figure~\ref{lab03}), we need the following result from~\cite{felSodTracyWid2010}.

\begin{theorem}\label{FeldSod2010Theor}(\cite[Theorem~I.1.1]{felSodTracyWid2010})
Let the same assumptions in Theorem~\ref{soshpecheLarge} hold, and let \scalebox{0.88}{$\lambda_n:=\lambda_{\min}\left({\mathfrak{X}_{N,n}}^{\top}\mathfrak{X}_{N,n}\right)$} be the smallest eigenvalue of \scalebox{0.87}{${\mathfrak{X}_{N,n}}^{\top}\mathfrak{X}_{N,n}$}. Then as \scalebox{0.87}{$N\to \infty$}, \scalebox{0.87}{$n\to \infty$} and \scalebox{0.87}{$n/N\to \mathfrak{y}\in (0, 1)$}, \scalebox{0.87}{$\zeta_{N,n}:=\frac{\lambda_n-\left(n^{1/2}-N^{1/2}\right)^2}{\left(n^{1/2}-N^{1/2}\right)\left(n^{-1/2}-N^{-1/2}\right)^{1/3}}$} converges in distribution to the Tracy--Widom \scalebox{0.88}{$TW_1$} law.
\end{theorem}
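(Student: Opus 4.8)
Theorem~\ref{FeldSod2010Theor} is a \emph{universality} statement, so the plan is to split it into (a) an exact computation for Gaussian entries and (b) a comparison step that transfers the conclusion to every entry law with the stated properties. First I would fix the qualitative picture: since $n/N\to\mathfrak{y}\in(0,1)$, the left edge $\lambda_-=(N^{1/2}-n^{1/2})^2$ of the Marchenko--Pastur support stays bounded away from $0$ and the limiting density vanishes there like a square root, so $\lambda_n$ sits at a genuine \emph{soft} edge and $\zeta_{N,n}$ is precisely the canonical $\lambda_-+\mathrm{scale}\times(\mathrm{Airy})$ rescaling (the sign $n^{1/2}-N^{1/2}<0$ merely reflects that we look at the \emph{left} edge, reversing orientation).

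For the Gaussian reference: when the $X_{ij}$ are i.i.d.\ $\mathcal{N}(0,1)$, the matrix $\mathfrak{X}_{N,n}^{\top}\mathfrak{X}_{N,n}$ is a real Wishart matrix (Laguerre orthogonal ensemble), whose joint eigenvalue density has an explicit Pfaffian structure. The soft-edge scaling limit of its bottom eigenvalue is the $\beta=1$ Airy point process, whose extreme point is $TW_1$-distributed; this is classical and would simply be cited. Hence the theorem holds verbatim for Gaussian entries, with the stated centering and scaling.

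The universality step I would run via the moment (trace) method of Soshnikov, Sodin and Feldheim--Sodin. Because $\lambda_n$ is the \emph{smallest} eigenvalue it is invisible to $\E{\operatorname{tr}((\mathfrak{X}_{N,n}^{\top}\mathfrak{X}_{N,n})^{k})}$, which is governed by the top of the spectrum, so I would symmetrize, e.g.\ via the chiral matrix $H=\bigl(\begin{smallmatrix}0 & \mathfrak{X}_{N,n}\\ \mathfrak{X}_{N,n}^{\top} & 0\end{smallmatrix}\bigr)$, whose spectrum is $\{\pm s_i(\mathfrak{X}_{N,n})\}$ together with $N-n$ zeros, so that $\sqrt{\lambda_n}$ is its smallest positive eigenvalue and lies at a soft edge near $\sqrt{N}-\sqrt{n}$. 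I would then analyze $\E{\operatorname{tr}\,\varphi_k\!\left(\tau_N^{-1}\bigl(H-(\sqrt{N}-\sqrt{n})\bigr)\right)}$ for polynomial test functions $\varphi_k$ adapted to the edge, with $k\asymp n^{2/3}$ and $\tau_N$ of order $N^{-1/6}$ chosen so the argument reproduces $\zeta_{N,n}$ up to lower-order terms. Expanding the trace as a sum over closed walks on the bipartite vertex set $[N]\sqcup[n]$ and organizing walks by the partition of their edge multiset, one needs three facts: (i) the leading contribution comes from walks whose edges are traversed exactly twice in a non-crossing pattern, and it equals, diagram for diagram, the Gaussian-case quantity, hence has the same limit; (ii) the symmetry of the entry law annihilates every odd joint moment, so only even pairings survive at leading order — this is exactly what puts the limit in the $\beta=1$ (GOE) Tracy--Widom class; (iii) the remaining ``non-Gaussian'' contributions, carrying higher cumulants of the entries, are $o(1)$ uniformly over the relevant $k$ — here the sub-Gaussian tail hypothesis enters, supplying $\E{|X_{ij}|^{p}}\lesssim p^{p/2}$ (cf.\ Proposition~\ref{Yproporties}\emph{(iii)}) and thus the factorial control needed to sum over walk topologies. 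Once all these moments are shown to converge to their Gaussian limits, the method of moments, combined with an exponential-moment (tightness) bound excluding eigenvalue mass outside the Tracy--Widom window, yields $\zeta_{N,n}\Rightarrow TW_1$ (the same machinery yields the companion statement for the largest eigenvalue, Theorem~\ref{soshpecheLarge}).

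The hard part is step (iii). In the soft-edge regime $k\asymp n^{2/3}$ the number of walk topologies is super-exponential, and one must extract exactly the right power of $N^{-1/3}$ from each self-intersection of a walk while weighting by the entries' moments, with estimates uniform over all $k$ up to $o(n^{2/3+\epsilon})$; assembling these combinatorial bounds, and upgrading the resulting moment convergence to a genuine distributional limit right at the edge (a separate tightness/one-sided large-deviation argument ruling out $\lambda_n$ straying far below $\lambda_-$), is where essentially all the work resides. A more modern alternative would replace the combinatorics by Lindeberg swapping together with an edge local law and eigenvalue rigidity for the sample covariance model, reducing universality to matching four moments with the Gaussian case — conceptually cleaner, at the cost of importing the local-law machinery wholesale.
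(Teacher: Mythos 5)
This statement is not proved in the paper at all: it is quoted verbatim as an external result, \cite[Theorem~I.1.1]{felSodTracyWid2010}, and the manuscript's only use of it is as a black box in the proof of Corollary~\ref{snDistribution}. So there is no internal proof to compare your argument against, and you were not expected to supply one.

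Taken on its own terms, your sketch correctly identifies the architecture of the actual proof in the cited reference: reduce to the chiral/symmetrized matrix so that the hard edge of $\mathfrak{X}_{N,n}^{\top}\mathfrak{X}_{N,n}$ becomes a soft edge (legitimate here precisely because $\mathfrak{y}<1$ keeps the left edge of the Marchenko--Pastur support away from the origin), establish the $TW_1$ limit for the Laguerre orthogonal ensemble, and transfer it by a trace/moment method with test polynomials of degree of order $n^{2/3}$, using symmetry of the entry law to kill odd moments and sub-Gaussian moment growth to control the non-pairing diagrams. But as you concede, the entire mathematical content lies in your step (iii): the uniform combinatorial bounds on self-intersecting walks at edge scaling, plus the tightness argument preventing $\lambda_n$ from escaping below the Tracy--Widom window. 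Without those estimates the proposal is a correct roadmap of Feldheim--Sodin's proof rather than a proof; reproducing it is a substantial paper in its own right, which is exactly why the present manuscript cites it rather than proving it. If your goal is only to justify the paper's use of the theorem, the citation suffices; if you genuinely want a self-contained argument, the Lindeberg-swapping plus local-law route you mention at the end is the more tractable modern path, but it too imports machinery (edge rigidity for sample covariance matrices) that would have to be stated and sourced precisely.
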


\begin{corollary}\label{snDistribution}
Under the conditions of any of Corollaries~\ref{CorBaiYin} and~\ref{CorBaiYin2}, the following holds in distribution:\\ \scalebox{0.88}{$-2\sqrt{\mathfrak{y}}\left(1/\sqrt{\mathfrak{y}}-1\right)^{-1/3}N^{2/3}\left(s_n(\Amatrix)+\left(1-\sqrt{N/n}\right)\right)\to\  TW_1$} as \scalebox{0.85}{$N\to \infty$}, \scalebox{0.88}{$n\to \infty$} and \scalebox{0.88}{$n/N\to \mathfrak{y}\in (0, 1)$}.
\end{corollary}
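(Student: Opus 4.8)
The plan is to mimic the proof of Corollary~\ref{s1Distribution} line by line, replacing the largest-eigenvalue universality result Theorem~\ref{soshpecheLarge} by its smallest-eigenvalue counterpart, Theorem~\ref{FeldSod2010Theor}. First I would set $\mathfrak{X}_{N,n}:=\sqrt{n}\,\Amatrix$, whose entries are independent, symmetric, of unit variance and (being bounded) have sub-Gaussian tails, so the hypotheses of Theorem~\ref{FeldSod2010Theor} are met; moreover $\lambda_n:=\lambda_{\min}(\mathfrak{X}_{N,n}^{\top}\mathfrak{X}_{N,n})=n\,(s_n(\Amatrix))^2$. Substituting this into the definition of $\zeta_{N,n}$ and using the identities $(n^{1/2}-N^{1/2})^2=n(1-\sqrt{N/n})^2$, $n^{1/2}-N^{1/2}=\sqrt{n}\,(1-\sqrt{N/n})$ and $(n^{-1/2}-N^{-1/2})^{1/3}=n^{-1/6}(1-\sqrt{n/N})^{1/3}$, one obtains
\[
\zeta_{N,n}=\frac{(s_n(\Amatrix))^2-(1-\sqrt{N/n})^2}{(1-\sqrt{N/n})(1-\sqrt{n/N})^{1/3}}\,n^{2/3}.
\]

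Next I would factor the numerator as a difference of squares and write $\zeta_{N,n}=\zeta_{N,n}^{(1)}\zeta_{N,n}^{(2)}$, where
\[
\zeta_{N,n}^{(1)}:=\frac{s_n(\Amatrix)-(1-\sqrt{N/n})}{(1-\sqrt{N/n})(1-\sqrt{n/N})^{1/3}},\qquad \zeta_{N,n}^{(2)}:=n^{2/3}\bigl(s_n(\Amatrix)+(1-\sqrt{N/n})\bigr).
\]
By Corollary~\ref{CorBaiYin} (respectively Corollary~\ref{CorBaiYin2}) one has $s_n(\Amatrix)\to 1/\sqrt{\mathfrak{y}}-1$ almost surely, so the numerator of $\zeta_{N,n}^{(1)}$ tends to $2(1/\sqrt{\mathfrak{y}}-1)$ and its denominator to $-(1/\sqrt{\mathfrak{y}}-1)(1-\sqrt{\mathfrak{y}})^{1/3}$; hence $\zeta_{N,n}^{(1)}\to -2(1-\sqrt{\mathfrak{y}})^{-1/3}$, a finite nonzero constant. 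On the other hand, Theorem~\ref{FeldSod2010Theor} gives $\zeta_{N,n}\to\zeta\sim TW_1$ in distribution, so, exactly as in the proof of Corollary~\ref{s1Distribution}, the Slutsky-type statement~\cite[Exercise~3.2.14]{durrett2019probability} yields $\zeta_{N,n}^{(2)}=\zeta_{N,n}\,(\zeta_{N,n}^{(1)})^{-1}\to -\tfrac12(1-\sqrt{\mathfrak{y}})^{1/3}\,\zeta$ in distribution.

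Finally I would pass from the $n^{2/3}$ normalization of $\zeta_{N,n}^{(2)}$ to the $N^{2/3}$ normalization in the statement: since $n/N\to\mathfrak{y}$, $N^{2/3}\bigl(s_n(\Amatrix)+(1-\sqrt{N/n})\bigr)=(n/N)^{-2/3}\zeta_{N,n}^{(2)}\to -\tfrac12\,\mathfrak{y}^{-2/3}(1-\sqrt{\mathfrak{y}})^{1/3}\,\zeta$ in distribution, and multiplying by $-2\sqrt{\mathfrak{y}}\,(1/\sqrt{\mathfrak{y}}-1)^{-1/3}=-2\mathfrak{y}^{2/3}(1-\sqrt{\mathfrak{y}})^{-1/3}$ cancels every constant and leaves the limit $\zeta\sim TW_1$, as claimed. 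I expect the only delicate point --- and the likeliest source of an error --- to be the sign bookkeeping: because $N\ge n$, both $1-\sqrt{N/n}$ and $n^{1/2}-N^{1/2}$ are nonpositive, so one must check that it is the \emph{sum} $s_n(\Amatrix)+(1-\sqrt{N/n})$ (not the difference) that carries the fluctuations to zero, and then chase the signs consistently through the difference-of-squares factorization and the two reciprocals; everything else is the same routine constant-chasing already done for Corollary~\ref{s1Distribution}.
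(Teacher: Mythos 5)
Your proof is correct and takes essentially the same route as the paper: both use Theorem~\ref{FeldSod2010Theor} for the smallest eigenvalue, factor $\zeta_{N,n}=\zeta_{N,n}^{(1)}\zeta_{N,n}^{(2)}$ via the difference of squares $(s_n(\Amatrix))^2-(1-\sqrt{N/n})^2$, send $\zeta_{N,n}^{(1)}$ to a constant via Corollary~\ref{CorBaiYin}, and invoke Slutsky to extract $\zeta_{N,n}^{(2)}$. The only cosmetic difference is where the scaling lands: the paper defines $\zeta_{N,n}^{(2)}:=N^{2/3}\bigl(s_n(\Amatrix)+(1-\sqrt{N/n})\bigr)$ directly so that $\zeta_{N,n}^{(1)}\to\hat{\mathfrak{y}}=-2\sqrt{\mathfrak{y}}(1/\sqrt{\mathfrak{y}}-1)^{-1/3}$ is exactly the constant in the statement, whereas you carry an $n^{2/3}$ normalization and then convert to $N^{2/3}$ with $(n/N)^{-2/3}\to\mathfrak{y}^{-2/3}$, which costs you one extra line of constant-chasing but arrives at the same place.
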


\begin{proof}
The proof follows that of Corollary~\ref{s1Distribution} by observing that in distribution, \scalebox{0.88}{$\zeta_{N,n}=\zeta_{N,n}^{(1)}\zeta_{N,n}^{(2)}\to\zeta\sim TW_1$} thanks to Theorem~\ref{FeldSod2010Theor}, where \scalebox{0.88}{$\zeta_{N,n}^{(1)}:=\frac{\left(1-\sqrt{N/n}\right)-s_n(\Amatrix)}{\sqrt{N/n}\left(\sqrt{N/n}-1\right)^{4/3}}\to$} \scalebox{0.88}{$-2\sqrt{\mathfrak{y}}\left(1/\sqrt{\mathfrak{y}}-1\right)^{-1/3}=:\hat{\mathfrak{y}}$} almost surely thanks to any of Corollaries~\ref{CorBaiYin} and~\ref{CorBaiYin2}, and  \scalebox{0.88}{$\zeta_{N,n}^{(2)}:=N^{2/3}\left(s_n(\Amatrix)+\left(1-\sqrt{N/n}\right)\right)= \zeta_{N,n}\left(\zeta_{N,n}^{(1)}\right)^{-1}\to {\hat{\mathfrak{y}}}^{-1}\zeta$} in distribution.
\end{proof}

}

\begin{figure}
\centering
\includegraphics[scale=0.43]{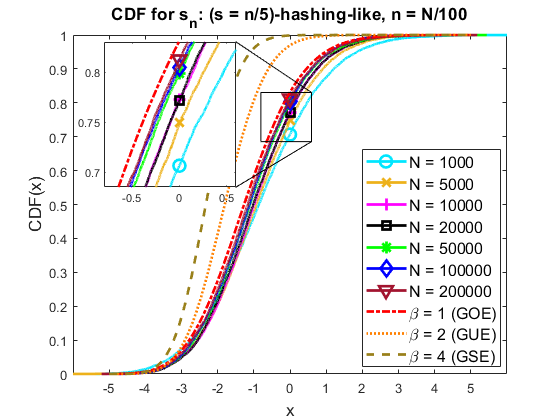}
\includegraphics[scale=0.43]{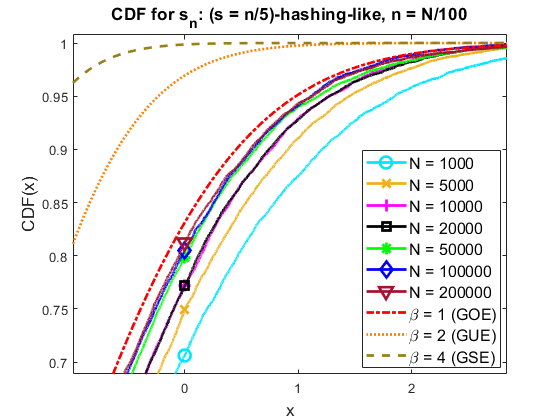}
\caption{\small{Cumulative distribution functions (CDFs) illustrating the result of Corollary~\ref{snDistribution}, that is, the convergence in distribution to the GOE Tracy--Widom $TW_1$ law $(\beta=1)$, of the rescaled smallest singular values of $n\times N$ $s$-hashing-like matrices. For each $N$, the curve of the empirical CDF is generated using $10^4$ matrix samples. The larger $N, n$, the closer the corresponding curve is to that of $TW_1$ and not those of the  Gaussian unitary and the Gaussian symplectic ensembles, denoted by GUE $(\beta=2)$ and GSE $(\beta=4)$, respectively.}}
\label{lab03}
\end{figure}

\subsection{Non-asymptotic behavior of extreme singular values}

As mentioned in~\cite{RudVersh2010ExtrSingVal}, the extent to which the limiting behavior of the smallest and largest singular values  such as asymptotics~\eqref{asymptotics21} manifests itself in fixed dimensions is not entirely clear. 
%Given a random matrix $\Amatrix$, one may want to place useful lower an upper bounds on it in order to control its magnitude
Moreover, understanding the behavior of extreme singular values of random matrices is crucial in myriad fields{\bl .} 
%including numerical linear algebra where the  condition number \scalebox{0.88}{$\kappa(\Amatrix):=s_1(\Amatrix)/s_n(\Amatrix)$} is often used as a measure of stability of matrix algorithms. On the other hand, 
{\bl For example,} in the randomized direct-search algorithmic framework recently introduced in~\cite{RobRoy2022RedSpace}, the smallest singular value of JLTs used to select random subspaces in which are defined  polling directions were required to be lower bounded by a fixed quantity with high probability. We also note that in all the algorithmic frameworks of~\cite{CRsubspace2021,DzaWildSub2022,RobRoy2022RedSpace}, random subspaces are defined as the range of \scalebox{0.88}{$\Amatrix\in\rNn$} so that determining their dimension requires estimating \scalebox{0.88}{$s_n(\Amatrix)$} non-asymptotically. The remainder of this section therefore focuses on establishing upper and lower bounds on \scalebox{0.88}{$s_1(\Amatrix)$ and $s_n(\Amatrix)$}, respectively, where \scalebox{0.88}{$\Amatrix=\Hfrac^{\top}\in \rNn, N\geq n$}, that is, the transpose of the matrices analyzed in Section~\ref{sec2}, since they can define JLTs. 

The proofs of the main results use a useful and important technique in geometric functional analysis: the $\varepsilon$-{\it net argument}, consisting of discretizations of unit spheres through so-called $\varepsilon$-nets in the Euclidean norm, which are subsets of these spheres that approximate each of their points up to error $\varepsilon$. 
%an $\varepsilon$-{\it net argument}, a useful and very important technique in geometric functional analysis~\cite{RudVersh2010ExtrSingVal}.
%The proof of~\cite[Proposition~2.4]{RudVersh2010ExtrSingVal} uses discretizations of unit spheres through so-called $\varepsilon$-nets in the Euclidean norm, which are subsets of these spheres that approximate every point of the spheres up to error $\varepsilon$. 
A more precise and general definition of $\varepsilon$-nets from~\cite[Definition~4.1]{Rudelson2014LectNotes} is provided next.
\begin{definition}\label{netDefinition}
Let \scalebox{0.88}{$(\mathscr{M},d)$} be a metric space. A set \scalebox{0.88}{$\n\subset \mathscr{M}$} is called an \scalebox{0.88}{$\varepsilon$}-net for \scalebox{0.88}{$\mathscr{K}\subset \mathscr{M}$}, for some \scalebox{0.88}{$\varepsilon>0$}, if \scalebox{0.88}{$\forall x\in \mathscr{K}\ \ \exists y\in\n\, \mbox{such that} \ \ d(x, y)<\varepsilon.$}
\end{definition}
The next result was derived in the proof of~\cite[Lemma~4.3]{Rudelson2014LectNotes}, which was used in~\cite[Proposition~4.4]{Rudelson2014LectNotes} and~\cite[Proposition~2.4]{RudVersh2010ExtrSingVal} to estimate the largest singular value of sub-Gaussian random matrices. It will be crucial for estimating the largest and smallest singular values of {\bl \scalebox{0.88}{$\Amatrix$}} in Theorems~\ref{LargestSingResult} and~\ref{SmallestSingResult}, respectively.
\begin{lemma}\label{volumLemma}(Volumetric estimate). 
For any \scalebox{0.88}{$\varepsilon\in (0,1)$}, there exists an \scalebox{0.88}{$\varepsilon$}-net \scalebox{0.88}{$\n_\varepsilon\in\mathbb{S}^{m-1}$} such that 
\scalebox{0.88}{$\abs{\n_\varepsilon}\leq \left(1+\frac{2}{\varepsilon}\right)^m.$}
\end{lemma}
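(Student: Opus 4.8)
The plan is to produce $\n_\varepsilon$ as a \emph{maximal $\varepsilon$-separated subset} of $\mathbb{S}^{m-1}$ and then bound its cardinality by the standard volumetric comparison carried out in the ambient space $\R^m$. First I would take $\n_\varepsilon\subset\mathbb{S}^{m-1}$ to be maximal (with respect to inclusion) among all subsets satisfying $\norme{x-y}_{2,m}\geq\varepsilon$ for every pair of distinct points $x,y$; such a set exists because any $\varepsilon$-separated subset of the compact set $\mathbb{S}^{m-1}$ is finite, so $\n_\varepsilon$ may be built by the greedy procedure (equivalently, one invokes Zorn's lemma). The maximality immediately yields the net property in the sense of Definition~\ref{netDefinition}: if some $x\in\mathbb{S}^{m-1}$ had $\norme{x-y}_{2,m}\geq\varepsilon$ for all $y\in\n_\varepsilon$, then $\n_\varepsilon\cup\{x\}$ would still be $\varepsilon$-separated, contradicting maximality; hence for every $x\in\mathbb{S}^{m-1}$ there is $y\in\n_\varepsilon$ with $\norme{x-y}_{2,m}<\varepsilon$.

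For the cardinality bound, writing $\mathcal{B}(z;r)$ for the closed Euclidean ball of center $z$ and radius $r$ in $\R^m$, I would use two observations. First, the balls $\{\mathcal{B}(x;\varepsilon/2):x\in\n_\varepsilon\}$ have pairwise disjoint interiors, since for distinct $x,y\in\n_\varepsilon$ the triangle inequality gives $\norme{x-y}_{2,m}\geq\varepsilon=\tfrac{\varepsilon}{2}+\tfrac{\varepsilon}{2}$. Second, because each $x\in\n_\varepsilon$ lies on $\mathbb{S}^{m-1}$, i.e.\ $\norme{x}_{2,m}=1$, every such ball is contained in $\mathcal{B}(0;1+\varepsilon/2)$. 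Comparing Lebesgue volumes and using the scaling identity $\vol{\mathcal{B}(z;r)}=r^m\vol{\mathcal{B}(0;1)}$ then gives
\[ \abs{\n_\varepsilon}\Bigl(\tfrac{\varepsilon}{2}\Bigr)^m\vol{\mathcal{B}(0;1)}=\sum_{x\in\n_\varepsilon}\vol{\mathcal{B}(x;\varepsilon/2)}\leq\vol{\mathcal{B}(0;1+\varepsilon/2)}=\Bigl(1+\tfrac{\varepsilon}{2}\Bigr)^m\vol{\mathcal{B}(0;1)}, \]
and dividing through by $(\varepsilon/2)^m\vol{\mathcal{B}(0;1)}$ yields $\abs{\n_\varepsilon}\leq\bigl(1+\tfrac{2}{\varepsilon}\bigr)^m$, the desired estimate.

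I do not anticipate a genuine obstacle here, since this is the classical packing bound; the only point warranting care is that the auxiliary balls must be taken in the ambient space $\R^m$ rather than as spherical caps on $\mathbb{S}^{m-1}$, so that both the volume-scaling identity and the inclusion $\bigcup_{x\in\n_\varepsilon}\mathcal{B}(x;\varepsilon/2)\subset\mathcal{B}(0;1+\varepsilon/2)$ hold literally. It is also worth emphasizing that a \emph{single} maximal $\varepsilon$-separated set simultaneously furnishes the net property and the disjointness needed for the volume count, so no separate construction is required.
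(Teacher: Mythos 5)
Your proof is correct and follows the same maximal-$\varepsilon$-separated-set plus volume-packing argument that the paper references (it cites \cite[Lemma~4.3]{Rudelson2014LectNotes} for this fact rather than proving it). Both the net property via maximality and the cardinality bound via disjoint balls of radius $\varepsilon/2$ packed inside $\mathcal{B}(0;1+\varepsilon/2)$ are exactly as in that source, so there is nothing further to reconcile.
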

Using $\varepsilon$-nets, we prove a basic bound on the first singular value of {\bl \scalebox{0.88}{$\Amatrix$}} in {\bl Theorem~\ref{LargestSingResult} and} Corollary~\ref{LargestSingCor} {\bl below}.
\begin{theorem}\label{LargestSingResult}(Largest singular value).
Consider the matrix \scalebox{0.88}{$\Amatrix\in \rNn$} of~\eqref{AMatrixEnsemble}.  
{\bl There exists an absolute constant \scalebox{0.88}{$\tilde{c}_{2,3}\in (0,1)$} such that} for \scalebox{0.88}{$N$} sufficiently large,
{\small{
\begin{equation}\label{largestSingValBound}
\prob{s_1(\Amatrix)>\hat{q}t\sqrt{\frac{N}{n}}}\leq e^{-c_0Nt^2} \,  \forall\ t\geq C_0{\bl ,}
%SW: I was trying to inline but it did not fit
%\quad \mbox{where } c_0=\tilde{c}_{2,3}^2\ln{\left(\frac{q}{2}+1\right)} 
%\mbox{ and } C_0=\left(\frac{\ln 5}{c_0}\right)^{1/2}.
\end{equation}
}}
{\bl where \scalebox{0.88}{$c_0=\tilde{c}_{2,3}^2\ln{\left(\frac{q}{2}+1\right)}$} and \scalebox{0.88}{$C_0=\left(\frac{\ln 5}{c_0}\right)^{1/2}$}.}
\end{theorem}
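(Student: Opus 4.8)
The plan is to use the classical $\varepsilon$-net argument for bounding the operator norm $s_1(\Amatrix) = \max_{\x \in \snOne} \normiiN{\Amatrix\x}$. First I would fix $\varepsilon = 1/2$ and invoke Lemma~\ref{volumLemma} to obtain a $(1/2)$-net $\n_{1/2} \subset \snOne$ with $\abs{\n_{1/2}} \leq 5^n$. The standard net comparison (see~\cite[Lemma~4.3]{Rudelson2014LectNotes}) gives $s_1(\Amatrix) \leq 2 \max_{\x \in \n_{1/2}} \normiiN{\Amatrix\x}$, so it suffices to control $\normiiN{\Amatrix\x}$ for each fixed $\x$ on the net and then union bound over the $5^n$ points. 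Writing $\Amatrix = \frac{\hat q}{\sqrt n}(\Yij)$, for a fixed $\x \in \snOne$ the $i$th coordinate of $\Amatrix\x$ is $\frac{\hat q}{\sqrt n}\sum_{j=1}^n x_j Y_{ij}$, a scaled linear combination of independent copies of $Y \sim \bbmu_Y(q)$ with coefficient vector $\x \in \mathbb{S}^{n-1}$.

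Next I would apply the Hoeffding-type inequality of Proposition~\ref{HoeffdingTypeIneq} to each such combination: $\prob{\abs{\sum_j x_j Y_{ij}} \geq u} \leq 2 e^{-c_2 u^2}$ with $c_2 = (c_{2,3}/K_1)^2 = c_{2,3}^2 \ln(\frac q2 + 1)$. To bound $\normiiN{\Amatrix\x}^2 = \frac{\hat q^2}{n}\sum_{i=1}^N (\sum_j x_j Y_{ij})^2$, I would note that each summand $\sum_j x_j Y_{ij}$ is sub-Gaussian with the uniform bound above, hence its square is sub-exponential; summing $N$ independent such terms and using a Bernstein-type tail (or equivalently the moment bound from Khintchine, Proposition~\ref{KhintchineProposition}, fed through a Markov/Chernoff argument as in the proof of Lemma~\ref{expoLambdaU}) yields that for $t \geq C_0$ with $C_0$ an appropriate absolute constant, $\prob{\normiiN{\Amatrix\x} > \frac{\hat q}{2} t \sqrt{N/n}} \leq e^{-c' N t^2}$ for a constant $c'$ proportional to $c_2$. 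The factor $\sqrt{N/n}$ appears because $\normiiN{\Amatrix\x}^2$ concentrates around $\frac{\hat q^2}{n} \cdot N \cdot \E{Y^2} \cdot \hat q^{-2} = N/n$ (using $\var Y = 1/\hat q^2 \cdot \hat q^2$... more precisely $\E{X_{ij}^2}=1$ so $\E{\normiiN{\Amatrix \x}^2} = N/n$).

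Then I would take a union bound over the net: $\prob{\max_{\x \in \n_{1/2}} \normiiN{\Amatrix\x} > \frac{\hat q}{2} t \sqrt{N/n}} \leq 5^n e^{-c' N t^2} = e^{n\ln 5 - c' N t^2}$. Since $n \leq N$, choosing $t \geq C_0 := (\ln 5 / c_0)^{1/2}$ with $c_0 = \tilde c_{2,3}^2 \ln(\frac q2+1)$ (absorbing the various absolute constants $c_{2,3}, c'$ into a single $\tilde c_{2,3}$) makes $n \ln 5 \leq \frac{1}{2} c_0 N t^2 \leq \frac12 c' N t^2$, so the exponent is at most $-\frac{c_0}{2} N t^2$; after adjusting constants this gives $e^{-c_0 N t^2}$. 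Combining with $s_1(\Amatrix) \leq 2\max_{\x \in \n_{1/2}}\normiiN{\Amatrix\x}$ absorbs the factor $2$ into the threshold, yielding $\prob{s_1(\Amatrix) > \hat q t \sqrt{N/n}} \leq e^{-c_0 N t^2}$ for all $t \geq C_0$, as claimed. The requirement "$N$ sufficiently large" is what lets the lower-order corrections from the sub-exponential concentration and the "$+1$"-type slack in the net cardinality be absorbed.

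The main obstacle I anticipate is the concentration step for $\normiiN{\Amatrix\x}^2$: unlike the sharp sub-Gaussian-rows results of Vershynin, here I must work directly with the sum of squares of sub-Gaussian linear forms and extract a clean tail with the explicit constant $c_0 = \tilde c_{2,3}^2 \ln(\frac q2 + 1)$ tracking the $q$-dependence. The cleanest route is probably to reuse the machinery already built: $\sqrt{c_2}\sum_j x_j Y_{ij}$ has the same tail as the variable $Z$ in Proposition~\ref{Yproporties}(ii)–(iii), so $\frac{\hat q^2 c_2}{n}\sum_i(\sum_j x_j Y_{ij})^2$ plays the role of (a rescaling of) the variable $U$ from Lemma~\ref{expoLambdaU} with $N$ in place of $n$; one then transfers the exponential moment bound of Lemma~\ref{expoLambdaU} and the tail bound of Lemma~\ref{Ukt}, being careful that the relevant "sufficiently large $t$" regime (rather than small deviations) is what is needed here, which is exactly why the statement restricts to $t \geq C_0$ rather than giving a two-sided bound.
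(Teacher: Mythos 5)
Your proposal is correct in spirit but takes a genuinely different route from the paper's. The paper uses a \emph{double} $\varepsilon$-net argument: it introduces $\frac12$-nets $\M$ of $\snOne$ and $\n$ of $\sNOne$, establishes $\norme{\Amatrix}\leq 4\sup_{\x\in\M,\,\y\in\n}\abs{\scal{\Amatrix\x}{\y}}$, and then observes that $\scal{\Amatrix\x}{\y}=\frac{\hat q}{\sqrt n}\sum_{i,j}x_jy_i Y_{ij}$ is a \emph{single} sub-Gaussian linear combination of the $Nn$ independent entries with coefficient vector $(x_jy_i)_{i,j}\in\mathbb{S}^{Nn-1}$. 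The Hoeffding-type inequality (Proposition~\ref{HoeffdingTypeIneq}) then gives $\prob{\abs{\scal{\Amatrix\x}{\y}}>\frac14\hat q t\sqrt{N/n}}\leq 2e^{-c_2t^2N/16}$ with $c_2=c_{2,3}^2\ln(\frac{q}{2}+1)$ directly, for \emph{every} $t\geq 0$, and the union bound is over $\abs{\M}\abs{\n}\leq 5^{2N}$ points. You instead use a single net of $\snOne$ and propose to control $\normiiN{\Amatrix\x}^2=\frac{\hat q^2}{n}\sum_{i=1}^N\bigl(\sum_j x_jY_{ij}\bigr)^2$, a sum of $N$ independent sub-exponential variables, via a Bernstein-type tail. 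This is a legitimate alternative that yields a bound of the right shape, but it is operationally more delicate here for two reasons. First, the Bernstein tail for the squared norm transitions between an $e^{-\Theta(Nt^4)}$ regime and an $e^{-\Theta(Nt^2)}$ regime; to land on $e^{-c_0Nt^2}$ with $c_0\propto\ln(\frac q2+1)$ one must check that for $t\geq C_0$ the deviation lies in (or past) the linear-in-deviation regime, a verification you gesture at but do not carry out. Second, the tail bound you cite (Lemma~\ref{Ukt}) only covers thresholds $t'\in(0,2K_5\sqrt N)$; for $t$ well above $C_0$ the relevant threshold $\sqrt N\bigl(\frac{\hat q^2t^2}{4}-1\bigr)$ exceeds $2K_5\sqrt N$, so you would instead need the linear-regime Chernoff bound obtained by saturating $\lambda=\sqrt N/K_5$ in the moment-generating-function estimate of Lemma~\ref{expoLambdaU}, which is not the same as re-using Lemma~\ref{Ukt} verbatim. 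What the paper's double-net / bilinear-form reduction buys is precisely avoiding both of these issues: it stays at the sub-Gaussian level throughout, so the exponent $c_0\propto\ln(\frac q2+1)$ and the threshold $C_0$ fall out immediately from Hoeffding, at the modest cost of a larger (but still benign) union bound over $5^{2N}$ rather than $5^n$ points.
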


\begin{proof}
The proof is inspired by that of~\cite[Proposition~4.4]{Rudelson2014LectNotes}. 
Recall that the independent entries \scalebox{0.88}{$\accolade{\Amatrix_{ij}}_{\underset{1\leq j\leq n}{1\leq i\leq N}}$} of \scalebox{0.88}{$\Amatrix$} are given by \scalebox{0.88}{$\Amatrix_{ij} = \frac{1}{\sqrt{n}} \Xij=\frac{1}{\sqrt{n}}\hat{q}\Yij$}, where the \scalebox{0.88}{$\Yij$} are independent copies of the variable \scalebox{0.88}{$Y$} in Proposition~\ref{HoeffdingTypeIneq}.
Let $\n$ and $\M$  be \scalebox{0.88}{$\frac{1}{2}$}-nets of \scalebox{0.88}{$\sNOne$} and \scalebox{0.88}{$\snOne$}, respectively. For all \scalebox{0.88}{$\ub\in\snOne$}, there exists \scalebox{0.88}{$\x\in\M$} such that \scalebox{0.88}{$\normiin{\x-\ub}<\frac{1}{2}$} by Definition~\ref{netDefinition}, which implies  %\scalebox{0.88}{
{\bbl 
\[\normiiN{\Amatrix \ub}=\normiiN{\Amatrix(\x-(\x-\ub))}\leq\normiiN{\Amatrix \x} +\norme{\Amatrix}\normiin{\x-\ub} \leq\normiiN{\Amatrix \x}+\frac{1}{2}\norme{\Amatrix}.\]
}
%}. 
Hence, 
%{\footnotesize{
\begin{equation}\label{supsup1}
\norme{\Amatrix}\leq 2\underset{\x\in\M}{\sup}\normiiN{\Amatrix \x}=2\underset{\x\in\M}{\sup}\ \underset{\vi\in\sNOne}{\sup}\abs{\scal{\Amatrix \x}{\vi}}.
\end{equation}
%}}
On the other hand, let \scalebox{0.88}{$\y\in\n$} be such that \scalebox{0.88}{$\normiiN{\y-\vi}<\frac{1}{2}$} thanks to Definition~\ref{netDefinition}. From the inequality\\ \scalebox{0.88}{$\abs{\scal{\Amatrix \x}{\vi}}\leq\abs{\scal{\Amatrix \x}{\y-\vi}}+\abs{\scal{\Amatrix \x}{\y}}$} together with the Cauchy--Schwarz inequality, it holds that {\bbl 
%\scalebox{0.88}{$
\begin{align*}
\underset{\vi\in\sNOne}{\sup}\abs{\scal{\Amatrix \x}{\vi}}=\normiiN{\Amatrix \x}&\leq \normiiN{\Amatrix \x}\normiiN{\y-\vi}+\underset{\y\in\n}{\sup}\abs{\scal{\Amatrix \x}{\y}}  \\
&\leq \frac{1}{2}\normiiN{\Amatrix \x}+\underset{\y\in\n}{\sup}\abs{\scal{\Amatrix \x}{\y}}.
\end{align*}
}
%$}. 
Thus, \scalebox{0.88}{$\normiiN{\Amatrix \x}\leq 2\underset{\y\in\n}{\sup}\abs{\scal{\Amatrix \x}{\y}}$}, which, combined with~\eqref{supsup1}, yields 
{\footnotesize{
\begin{equation}\label{sup4MN}
\norme{\Amatrix}\leq 4 \underset{\x\in\M,\, \y\in\n}{\sup} \abs{\scal{\Amatrix \x}{\y}}.
\end{equation}
}}
By Lemma~\ref{volumLemma}, \scalebox{0.88}{$\abs{\M}\leq 5^n$} and \scalebox{0.88}{$\abs{\n}\leq 5^N$}. Moreover, \scalebox{0.88}{$\scal{\Amatrix \x}{\y}=\frac{\hat{q}}{\sqrt{n}}\sum_{i=1}^{N}\sum_{j=1}^{n}\aij\Yij$}, where the coefficients $\aij:=x_j y_i$ satisfy \scalebox{0.88}{$\sum_{i=1}^{N}\sum_{j=1}^{n}{\aij}^2= \sum_{i=1}^{N}y_i^2\left(\sum_{j=1}^{n} x_j^2\right)=1$}, since both $\x$ and $\y$ are unit vectors. Thus, it follows from Proposition~\ref{HoeffdingTypeIneq} that for all $\x\in\M$ and $\y\in\n$, {\bbl 
\[
\prob{\abs{\scal{\Amatrix \x}{\y}}>\frac{1}{4}\hat{q}t\sqrt{\frac{N}{n}}}\leq 2e^{-\frac{c_2t^2}{16}N}
\qquad \forall t>0, \] 
}
%for all \scalebox{0.88}{$t>0$}, 
where \scalebox{0.88}{$c_2={\bl \left(\frac{c_{2,3}}{K_1}\right)^2}$} is the constant in Proposition~\ref{HoeffdingTypeIneq}.

{\bbl
Now consider the event $\mathscr{E}_1:=\accolade{\norme{\Amatrix}>\hat{q}t\sqrt{\frac{N}{n}}}.$ In order to bound $\prob{\mathscr{E}_1}$, we first observe from~\eqref{sup4MN} that $\mathscr{E}_1\subseteq \accolade{\underset{\x\in\M,\, \y\in\n}{\sup}\abs{\scal{\Amatrix \x}{\y}}>\frac{1}{4}\hat{q}t\sqrt{\frac{N}{n}}}:=\mathscr{E}_2$, and that the occurrence of $\mathscr{E}_2$ implies the existence of $\x\in\M$ and $\y\in\n$ such that $\abs{\scal{\Amatrix \x}{\y}}>\frac{1}{4}\hat{q}t\sqrt{\frac{N}{n}}$, that is, $\mathscr{E}_2\subseteq \accolade{\exists \x\in\M, \exists \y\in\n:\abs{\scal{\Amatrix \x}{\y}}>\frac{1}{4}\hat{q}t\sqrt{\frac{N}{n}}}:=\mathscr{E}_3$. Thus, using the latter inclusions and then} taking the union bound lead to {\bbl 
%the following bound on $P_1:=\prob{\norme{\Amatrix}>\hat{q}t\sqrt{\frac{N}{n}}}$:
{\small{
\begin{align*}
%\prob{\norme{\Amatrix}>\hat{q}t\sqrt{\frac{N}{n}}}
\prob{\norme{\Amatrix}>\hat{q}t\sqrt{\frac{N}{n}}}=\prob{\mathscr{E}_1}
&\leq 
    \prob{\mathscr{E}_2}
    %\\
%&
\leq \prob{\mathscr{E}_3}
\leq
    \ds{\sum_{\x\in\M} \sum_{\y\in\n}\prob{\abs{\scal{\Amatrix \x}{\y}}>\frac{1}{4}\hat{q}t\sqrt{\frac{N}{n}}}}\\
    &\leq 
    \ds{\abs{\n}\abs{\M}\underset{\x\in\M,\, \y\in\n}{\sup} \prob{\abs{\scal{\Amatrix \x}{\y}}>\frac{1}{4}\hat{q}t\sqrt{\frac{N}{n}}}}
    \\
&\leq 
     5^{2N}\cdot2e^{-\frac{c_2}{16}Nt^2}
    %\\
%&
=
    2e^{-\frac{c_2}{16}Nt^2 + 2N\ln 5}.
\end{align*}}}
}
%\scalebox{0.81}{$\prob{\norme{\Amatrix}>\hat{q}t\sqrt{\frac{N}{n}}}\leq \prob{\underset{\x\in\M,\, \y\in\n}{\sup}\abs{\scal{\Amatrix \x}{\y}}>\frac{1}{4}\hat{q}t\sqrt{\frac{N}{n}}}\leq \prob{\exists \x\in\M, \exists \y\in\n:\abs{\scal{\Amatrix \x}{\y}}>\frac{1}{4}\hat{q}t\sqrt{\frac{N}{n}}}\leq$}\\
%\scalebox{0.81}{$\ds{\sum_{\x\in\M} \sum_{\y\in\n}\prob{\abs{\scal{\Amatrix \x}{\y}}>\frac{1}{4}\hat{q}t\sqrt{\frac{N}{n}}} \leq \abs{\n}\abs{\M}\underset{\x\in\M,\, \y\in\n}{\sup} \prob{\abs{\scal{\Amatrix \x}{\y}}>\frac{1}{4}\hat{q}t\sqrt{\frac{N}{n}}}}\leq 5^{2N}\cdot2e^{-\frac{c_2}{16}Nt^2}=$}\\
%\scalebox{1}{$2e^{-\frac{c_2}{16}Nt^2 + 2N\ln 5}.$}
%\begin{eqnarray*}
%\prob{\norme{\Amatrix}>\hat{q}t\sqrt{\frac{N}{n}}}&\leq& \prob{\underset{\x\in\M,\, \y\in\n}{\sup}\abs{\scal{\Amatrix \x}{\y}}>\frac{1}{4}\hat{q}t\sqrt{\frac{N}{n}}}
%\leq \prob{\exists \x\in\M, \exists \y\in\n:\abs{\scal{\Amatrix \x}{\y}}>\frac{1}{4}\hat{q}t\sqrt{\frac{N}{n}}}  \\
%&\leq&\sum_{\x\in\M} \sum_{\y\in\n}\prob{\abs{\scal{\Amatrix \x}{\y}}>\frac{1}{4}\hat{q}t\sqrt{\frac{N}{n}}} \leq \abs{\n}\abs{\M}\underset{\x\in\M,\, \y\in\n}{\sup} \prob{\abs{\scal{\Amatrix \x}{\y}}>\frac{1}{4}\hat{q}t\sqrt{\frac{N}{n}}}\\
%&\leq& 5^{2N}\cdot2e^{-\frac{c_2}{16}Nt^2}=2e^{-\frac{c_2}{16}Nt^2 + 2N\ln 5}. 
%\end{eqnarray*}
Then the inequality
{\small{
\begin{equation}\label{forChoices}
\prob{\norme{\Amatrix}>\hat{q}t\sqrt{\frac{N}{n}}}\leq 2e^{-c_0^{\star}Nt^2}\ \forall\ t\geq C_0^{\star},\ \mbox{\normalsize for some constants}\ C_0, c_0^{\star}>0,
\end{equation}
}}
holds, provided \scalebox{0.88}{$(\frac{c_2}{16}-c_0^{\star})t^2\geq 2\ln 5$}. For \scalebox{0.88}{$c_0^{\star}:=\frac{c_2}{32}$}, then \scalebox{0.85}{$t\geq 8\left(\frac{\ln 5}{c_2}\right)^{1/2}={\bl \frac{1}{\tilde{c}_{2,3}} \sqrt{\frac{\ln 5}{\ln{\left(\frac{q}{2}+1\right)}}}}=:C_0$}, {\bl where \scalebox{0.88}{$\tilde{c}_{2,3}:=\frac{{c}_{2,3}}{8}$}.} Then by~\eqref{forChoices},  \scalebox{0.88}{$\prob{\norme{\Amatrix}>\hat{q}t\sqrt{\frac{N}{n}}}\leq e^{-\frac{c_0^{\star}}{2}Nt^2}\  \forall\ t\geq C_0,$} for \scalebox{0.88}{$N$} sufficiently large, and the proof is completed. 
\end{proof}
The largest singular value of a hashing-like matrix $\Hfrac$ given by~\eqref{HashlikeEnsemble}, which is also that of $\Amatrix$ provided by~\eqref{AhashingLike}, satisfies the following probabilistic bound.
\begin{corollary}\label{LargestSingCor}
Consider \scalebox{0.88}{$\Amatrix$} as defined in~\eqref{AhashingLike}.
{\bl There exists an absolute constant \scalebox{0.88}{$\tilde{c}_{2,3}>0$} such that} for \scalebox{0.88}{$N$} sufficiently large,
{\small{
\begin{equation}\label{largestSingValHashing}
\prob{s_1(\Amatrix)>t\sqrt{\frac{N}{s}}}\leq \exp\left(-{\bl \tilde{c}_{2,3}^2}\ln\left(\frac{n}{s}+1\right)N t^2 \right)\leq e^{-{\bl \left(\tilde{c}_{2,3}^2\ln 2\right)}N t^2},\  \forall\ t\geq {\bl C_0',}
\end{equation}
}}
{\bl where \scalebox{0.88}{$C_0'=\frac{1}{\tilde{c}_{2,3}}\sqrt{\frac{\ln 5}{\ln 2}}$}.}
\end{corollary}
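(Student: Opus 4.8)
The plan is to derive Corollary~\ref{LargestSingCor} as the specialization of Theorem~\ref{LargestSingResult} to the parameter choice $q=\frac{2n}{s}$, which is exactly the choice that turns the general ensemble~\eqref{AMatrixEnsemble} into the $s$-hashing-like ensemble~\eqref{AhashingLike}. First I would check the admissibility of this substitution: since $s\in(0,n]$ we have $q=\frac{2n}{s}\ge 2$, so $q$ lies in the range required by Theorem~\ref{LargestSingResult}, and with this choice $\hat q=\sqrt{q/2}=\sqrt{n/s}$. Consequently the scaling factor in~\eqref{largestSingValBound} becomes $\hat q\,t\sqrt{N/n}=t\sqrt{N/s}$, which matches the left-hand side of~\eqref{largestSingValHashing}; moreover $s_1(\Amatrix)=\norme{\Amatrix}$ by Definition~\ref{singValDef}, so the two events in question coincide.

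Next I would propagate the constants. The exponent in Theorem~\ref{LargestSingResult} is $c_0=\tilde{c}_{2,3}^2\ln\!\left(\frac q2+1\right)$, which under $q=\frac{2n}{s}$ reads $c_0=\tilde{c}_{2,3}^2\ln\!\left(\frac ns+1\right)$; substituting this into $e^{-c_0Nt^2}$ yields the first inequality of~\eqref{largestSingValHashing} verbatim. For the second inequality I would use that $s\le n$ forces $\frac ns\ge 1$, hence $\ln\!\left(\frac ns+1\right)\ge\ln 2$, so replacing $\ln(n/s+1)$ by $\ln 2$ in the exponent can only enlarge the exponent and shrink the bound. The threshold is handled the same way: Theorem~\ref{LargestSingResult} holds for all $t\ge C_0=\left(\tfrac{\ln 5}{c_0}\right)^{1/2}=\frac{1}{\tilde{c}_{2,3}}\sqrt{\frac{\ln 5}{\ln(n/s+1)}}$, and the inequality $\ln(n/s+1)\ge\ln 2$ gives $C_0\le\frac{1}{\tilde{c}_{2,3}}\sqrt{\frac{\ln 5}{\ln 2}}=:C_0'$; since $\{t\ge C_0'\}\subseteq\{t\ge C_0\}$, the probabilistic estimate remains valid on the (smaller) range $t\ge C_0'$, which is exactly what is claimed.

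I do not expect a genuine obstacle here: all the analytic work — the $\varepsilon$-net discretization, the volumetric estimate of Lemma~\ref{volumLemma}, and the union bound with the Hoeffding-type tail of Proposition~\ref{HoeffdingTypeIneq} — is already contained in Theorem~\ref{LargestSingResult}, and the ``$N$ sufficiently large'' hypothesis there is independent of $q$, so it transfers unchanged. The only points requiring a little care are keeping the $\hat q$ bookkeeping consistent (so that $\hat q\sqrt{N/n}$ really collapses to $\sqrt{N/s}$) and noting that the monotonicity $\ln(n/s+1)\ge\ln 2$ is used twice, once to weaken the exponent and once to enlarge the admissible range of $t$.
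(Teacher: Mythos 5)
Your proposal is correct and follows essentially the same route as the paper: specialize Theorem~\ref{LargestSingResult} to $q=2n/s$, note that $\hat q\sqrt{N/n}=\sqrt{N/s}$, and exploit $\ln(n/s+1)\ge\ln 2$ both to weaken the exponent and to replace $C_0$ by the $q$-independent threshold $C_0'$. The only difference is that you spell out a few routine checks (admissibility $q\ge 2$, the inclusion $\{t\ge C_0'\}\subseteq\{t\ge C_0\}$) that the paper leaves implicit.
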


\begin{proof}
The result is derived from that of Theorem~\ref{LargestSingResult} in the special case where \scalebox{0.88}{$\frac{1}{q}=\frac{s}{2n}$}. We first observe that 
{\bl \scalebox{0.88}{$C_0\leq \frac{1}{\tilde{c}_{2,3}}\sqrt{\frac{\ln 5}{\ln 2}}=:C_0'$} and that \scalebox{0.88}{$c_0\geq \tilde{c}_{2,3}^2\ln 2$}.}
Then~\eqref{largestSingValHashing} immediately follows from~\eqref{largestSingValBound} after observing that\\ \scalebox{0.88}{$\hat{q}\sqrt{\frac{N}{n}} =\sqrt{\frac{q}{2}}\sqrt{\frac{N}{n}}=\sqrt{\frac{n}{s}}\sqrt{\frac{N}{n}}= \sqrt{\frac{N}{s}}$}, and the proof is completed.
\end{proof}

To derive a bound on the smallest singular value in Theorem~\ref{SmallestSingResult} below, we need the following estimate for a small ball probability of a sum of independent random variables, which is crucial for the proof of Lemma~\ref{TensorLemma}.
% required to derive 
\begin{lemma}\label{smallBallLemma}(Small ball probability).
Let \scalebox{0.88}{$X_1, \dots, X_m$} be independent copies of \scalebox{0.88}{$X\sim \bbmu_X(q)$}. Let \scalebox{0.88}{$\bm{a}=(a_1, \dots, a_m)\in\mathbb{S}^{m-1}$} and define \scalebox{0.88}{$S_{X,m}:=\sum_{\ell=1}^m a_{\ell} X_{\ell}$}. Then 
{\footnotesize{
\begin{equation}\label{smallBallRes}
\prob{\abs{S_{X,m}}\leq \frac{1}{2}}\leq 1-\gamma^2, \quad\mbox{\normalsize where}\ \gamma=\gamma(q):=\frac{3}{8qK_4^2}=\frac{3\ln(\hat{q}^2+1)}{{\bl 16C_4^2}\hat{q}^2}\in (0,1).
\end{equation}
}}
\end{lemma}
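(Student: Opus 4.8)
The plan is to obtain this anti\-concentration estimate from the Paley--Zygmund inequality applied to the nonnegative random variable $Z:=S_{X,m}^2$, so that the whole argument reduces to controlling the first two moments of $Z$. First I would record that, since the $X_\ell$ are independent copies of $X\sim\bbmu_X(q)$ with $\E{X}=0$ and $\E{X^2}=\frac{2}{q}\hat{q}^2=1$, and since $\bm{a}\in\mathbb{S}^{m-1}$, one has $\E{Z}=\E{S_{X,m}^2}=\sum_{\ell=1}^m a_\ell^2\,\E{X^2}=1$.

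Next I would bound the fourth moment $\E{Z^2}=\E{S_{X,m}^4}$ using Khintchine's upper bound. Writing $X_\ell=\hat{q}\,Y_\ell$ with $Y_\ell\sim\bbmu_Y(q)$ as in~\eqref{muXqmuYq}, we have $S_{X,m}=\hat{q}\,S_{Y,m}$, so Proposition~\ref{KhintchineProposition} with $p=4$ gives $\bigl(\E{\abs{S_{Y,m}}^4}\bigr)^{1/4}\leq 2K_4$, whence $\E{S_{X,m}^4}=\hat{q}^4\,\E{S_{Y,m}^4}\leq 16\,\hat{q}^4 K_4^4$.

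Then I would invoke the Paley--Zygmund inequality: for a nonnegative $Z$ with $0<\E{Z^2}<\infty$ and any $\theta\in[0,1]$, $\prob{Z>\theta\,\E{Z}}\geq(1-\theta)^2(\E{Z})^2/\E{Z^2}$, which I would justify in one line by splitting $\E{Z}$ over $\{Z\leq\theta\E{Z}\}$ and its complement and applying Cauchy--Schwarz. Applying it with $\theta=\tfrac14$, so that $\theta\,\E{Z}=\tfrac14$, yields $\prob{S_{X,m}^2>\tfrac14}\geq\tfrac{9}{16}\cdot\frac{1}{16\,\hat{q}^4 K_4^4}=\frac{9}{256\,\hat{q}^4 K_4^4}$. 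Since $q=2\hat{q}^2$ we have $64\,q^2K_4^4=256\,\hat{q}^4K_4^4$, so this lower bound equals $\gamma^2$ with $\gamma=\frac{3}{8qK_4^2}$; substituting $K_4^2=C_4^2/\ln(\hat{q}^2+1)$ from Proposition~\ref{KhintchineProposition} rewrites $\gamma$ in the stated closed form $\frac{3\ln(\hat{q}^2+1)}{16C_4^2\hat{q}^2}$. Because $\{\abs{S_{X,m}}\leq\tfrac12\}$ is exactly the complement of $\{S_{X,m}^2>\tfrac14\}$, this gives $\prob{\abs{S_{X,m}}\leq\tfrac12}\leq 1-\gamma^2$. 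To see $\gamma\in(0,1)$, I would use $\ln(1+x)\leq x$ to get $\gamma=\frac{3\ln(\hat{q}^2+1)}{8q C_4^2}\leq\frac{3\hat{q}^2}{8q C_4^2}=\frac{3}{16C_4^2}<1$ since $C_4>1$.

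There is no genuine obstacle here; the only point requiring care is to route the fourth\-moment bound through Khintchine's inequality (Proposition~\ref{KhintchineProposition}) rather than a direct moment expansion, since it is precisely the constant $K_4$ that makes the resulting $\gamma$ match the form claimed in the statement, and then to keep the bookkeeping with $q$, $\hat{q}$, $K_4$, and $C_4$ consistent.
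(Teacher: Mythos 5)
Your proof is correct and takes essentially the same route as the paper: compute $\E{S_{X,m}^2}=1$, bound $\E{S_{X,m}^4}\leq(2\hat{q}K_4)^4$ via Khintchine (Proposition~\ref{KhintchineProposition}), and apply Paley--Zygmund with $\vartheta=1/4$ to obtain $\prob{S_{X,m}^2>\tfrac14}\geq\gamma^2$, then pass to the complement. The only cosmetic difference is that you spell out the arithmetic identifying $\frac{9}{256\hat{q}^4K_4^4}$ with $\gamma^2$ and give an explicit $\ln(1+x)\leq x$ justification for $\gamma<1$, where the paper simply states both.
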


\begin{proof}
By the same arguments used in Lemma~\ref{expoLambdaU} to demonstrate \scalebox{0.88}{$\E{T_1^2}=1$}, we first observe that \scalebox{0.88}{$\E{S_{X,m}^2}=1$}. By Khintchine~\eqref{Khintchine} or using~\eqref{T1Khintcine} with minor changes, \scalebox{0.88}{$\E{S_{X,m}^4}\leq (2\hat{q}K_4)^4=\left(\frac{3}{4\gamma}\right)^2$}. It follows from the Paley--Zygmund inequality (see, e.g.,~\cite[Exercise~1.1.9]{tao2012topicsInRMT}): \scalebox{0.88}{$\prob{W>\vartheta\E{W}}\geq (1-\vartheta)^2\cfrac{(\E{W})^2}{\E{W^2}}$} with \scalebox{0.88}{$W:=S_{X,m}^2$} and \scalebox{0.88}{$\vartheta=\frac{1}{4}$}, that \scalebox{0.88}{$\prob{S_{X,m}^2>\frac{1}{4}}=\prob{S_{X,m}^2>\frac{1}{4}\E{S_{X,m}^2}}\geq \frac{(3/4)^2}{\E{S_{X,m}^4}}\geq \gamma^2$}. Then \scalebox{0.88}{$\frac{3}{8qK_4^2}=\frac{3\ln(\hat{q}^2+1)}{{\bl 16C_4^2}\hat{q}^2}$} which follows from the definition of \scalebox{0.88}{$K_4$} given by~\eqref{Khintchine} after simple calculations{\bl . The proof is completed by noticing that \scalebox{0.88}{$\gamma\in (0,1)$} since \scalebox{0.88}{$C_4>1$}}.
\end{proof}

Lemma~\ref{smallBallLemma} implies the following invertibility estimate for a fixed vector.
\begin{lemma}\label{TensorLemma}(Invertibility for a fixed vector).
Let $\gamma$ be the same constant of Lemma~\ref{smallBallLemma}. Then the matrix \scalebox{0.88}{$\Amatrix\in \rNn$} in~\eqref{AMatrixEnsemble} satisfies 
{\footnotesize{
\begin{equation}\label{TensorIneq}
\prob{\normiiN{\Amatrix \x}\leq \frac{\gamma}{4}\sqrt{\frac{N}{n}}}\leq \psi^N \quad\forall\ \x\in \snOne,\ \mbox{\normalsize where}\ \psi=\psi(q)=e^{\gamma^2\left(\frac{\ln 2}{4}-\frac{1}{2}\right)}\in (0, 1).
\end{equation}
}}
\end{lemma}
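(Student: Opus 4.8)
The plan is to reduce the estimate to the one-dimensional anti-concentration bound of Lemma~\ref{smallBallLemma} by a tensorization (exponential-Markov) argument over the $N$ independent rows of $\Amatrix$. First I would fix $\x=(x_1,\dots,x_n)\in\snOne$ and set $S^{(i)}:=\sum_{j=1}^n x_j X_{ij}$ for $i=1,\dots,N$, so that the $i$-th coordinate of $\Amatrix\x$ is $(\Amatrix\x)_i=\frac{1}{\sqrt n}S^{(i)}$ and hence $\normiiN{\Amatrix\x}^2=\frac1n\sum_{i=1}^N (S^{(i)})^2$. Because the entries $X_{ij}$ of $\Amatrix$ are independent and identically distributed, the $S^{(i)}$ are i.i.d.\ and each is of the form $\sum_{\ell=1}^n a_\ell X_\ell$ with $\ab=\x\in\snOne$; thus Lemma~\ref{smallBallLemma} (applied with $m=n$) gives $\prob{|S^{(i)}|\le\frac12}\le 1-\gamma^2$, or equivalently $p:=\prob{|S^{(i)}|\ge\frac12}\ge\gamma^2$. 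With this notation the event to be bounded is $\{\normiiN{\Amatrix\x}\le\frac{\gamma}{4}\sqrt{N/n}\}=\{\sum_{i=1}^N (S^{(i)})^2\le\frac{\gamma^2 N}{16}\}$.

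Next I would apply the Chernoff bound to the nonnegative i.i.d.\ sum $\sum_i (S^{(i)})^2$: for every $t>0$,
\[
\prob{\sum_{i=1}^N (S^{(i)})^2\le\frac{\gamma^2 N}{16}}\ \le\ e^{t\gamma^2 N/16}\,\Bigl(\E{e^{-t(S^{(1)})^2}}\Bigr)^{N}.
\]
To control the single-row factor I would use the elementary pointwise inequality $e^{-ts^2}\le\mathds{1}_{\{|s|<1/2\}}+e^{-t/4}\,\mathds{1}_{\{|s|\ge 1/2\}}$ (valid because $e^{-ts^2}\le1$ always and $e^{-ts^2}\le e^{-t/4}$ when $s^2\ge\frac14$), which yields
\[
\E{e^{-t(S^{(1)})^2}}\ \le\ (1-p)+e^{-t/4}p\ =\ 1-p\bigl(1-e^{-t/4}\bigr)\ \le\ 1-\gamma^2\bigl(1-e^{-t/4}\bigr)\ \le\ e^{-\gamma^2(1-e^{-t/4})},
\]
using $p\ge\gamma^2$ (and $1-e^{-t/4}\ge0$) followed by $1-x\le e^{-x}$. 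Combining the two displays gives $\prob{\normiiN{\Amatrix\x}\le\frac{\gamma}{4}\sqrt{N/n}}\le\exp\!\bigl(N\gamma^2(\tfrac{t}{16}-1+e^{-t/4})\bigr)$.

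Finally I would choose $t$ so as to match the stated constant: taking $t=4\ln 2$ gives $\tfrac{t}{16}=\tfrac{\ln 2}{4}$ and $e^{-t/4}=\tfrac12$, so the exponent becomes $N\gamma^2\bigl(\tfrac{\ln 2}{4}-\tfrac12\bigr)$, i.e.\ the bound is exactly $\psi^N$ with $\psi=e^{\gamma^2(\frac{\ln 2}{4}-1/2)}$; and $\psi\in(0,1)$ since $\frac{\ln 2}{4}-\frac12<0$ and $\gamma>0$. I do not anticipate a genuine obstacle: the only points needing a little care are (i) passing from $\{|S^{(i)}|\le\frac12\}$ in Lemma~\ref{smallBallLemma} to the lower bound $p\ge\gamma^2$ on $\prob{|S^{(i)}|\ge\frac12}$, which is immediate by monotonicity of probability, and (ii) observing that the $N$ row-sums $S^{(i)}$ are genuinely i.i.d., so that factoring the expectation over rows in the Chernoff step is licit --- both follow directly from the independence of the entries $X_{ij}$. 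The quantitative ingredient with the explicit constant $\gamma$ has already been supplied by Lemma~\ref{smallBallLemma}, so what remains is the elementary optimization above.
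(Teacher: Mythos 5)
The proposal is correct and takes essentially the same approach as the paper: tensorization of the single-row anti-concentration bound from Lemma~\ref{smallBallLemma} via an exponential Markov/Chernoff argument over the $N$ independent rows, with the same parameter choice (your $t=4\ln 2$ matches the paper's $\tau=4(\ln 2)\,t^2$ with $t=\gamma/4$) yielding the same $\psi$. The only cosmetic difference is how the single-row factor $\E{e^{-t(S^{(1)})^2}}$ is bounded --- you use the elementary pointwise split $e^{-ts^2}\le \mathds{1}_{\{|s|<1/2\}}+e^{-t/4}\mathds{1}_{\{|s|\ge 1/2\}}$, whereas the paper integrates the tail $\int_0^1\prob{e^{-\tau\xi_i^2/t^2}>r}\,dr$ --- but both routes give the identical intermediate estimate $1-\gamma^2\bigl(1-e^{-t/4}\bigr)$.
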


\begin{proof}
The proof uses a {\it tensorization argument} (see, e.g.,~\cite[Lemma~2.2-(2)]{RudVersh2008Lit}), which was also used in the proof of~\cite[Proposition~3.4]{LitPajRud2005Pol}. More precisely, define \scalebox{0.88}{$\xi_i=\abs{\sqrt{n}(\Amatrix\x)_i}=\abs{\sum_{j=1}^n x_j\Xij}$} for all \scalebox{0.88}{$i\in \intbracket{1, N}$}. By Lemma~\ref{smallBallLemma}, \scalebox{0.88}{$\prob{\xi_i^2\leq \lambda^2}\leq 1-\gamma^2$} with \scalebox{0.88}{$\lambda=\frac{1}{2}$}, which leads to \scalebox{0.88}{$\prob{\sum_{i=1}^N\xi_i^2\leq \frac{\gamma^2}{16}N}\leq \psi^N$} using the tensorization argument and hence yielding the required result, as detailed next. For any \scalebox{0.88}{$t, \tau>0$}, 
{\footnotesize{
\begin{equation}\label{expoTauNIneq}
\begin{split}
\prob{\normiiN{\Amatrix \x}\leq t\sqrt{\frac{N}{n}}} &= \prob{n\normiiN{\Amatrix \x}^2\leq t^2N}=\prob{\sum_{i=1}^N\xi_i^2\leq t^2N}\\
&=\prob{N-\frac{1}{t^2}\sum_{i=1}^N\xi_i^2\geq 0}\leq \E{\exp\left(\tau N - \frac{\tau}{t^2}\sum_{i=1}^N\xi_i^2\right)}\\
&=e^{\tau N}\prod_{i=1}^{N}\E{e^{-\tau \xi_i^2/t^2}},
\end{split}
\end{equation}
}}where we used Markov's inequality while the last equality results from the independence of the $\xi_i$. In order to bound the last expectation in~\eqref{expoTauNIneq}, we fix \scalebox{0.88}{$i\in \intbracket{1, N}$}, define \scalebox{0.88}{$\lambda$} as above, and  let \scalebox{0.88}{$t, \tau>0$}  be chosen later. We first observe that the inequality \scalebox{0.88}{$\prob{\xi_i^2\geq \lambda^2}\geq\gamma^2$} implies \scalebox{0.8}{$\prob{e^{-\tau \xi_i^2/t^2}>e^{-\tau\lambda^2/t^2}}\leq 1-\gamma^2$} and hence \scalebox{0.85}{$\prob{e^{-\tau \xi_i^2/t^2}>r}\leq 1-\gamma^2$} for all \scalebox{0.88}{$r\geq e^{-\tau\lambda^2/t^2}$}. Thus, using the integral identity~\cite[Lemma~1.2.1]{vershynin2018HDP} and the fact that the support of the random variable \scalebox{0.88}{$e^{-\tau \xi_i^2/t^2}$} is \scalebox{0.88}{$(0, 1]$}, it holds that
{\footnotesize{
\begin{equation}\label{TauExpect}
\begin{split}
\E{e^{-\tau \xi_i^2/t^2}} &=\int_{0}^{+\infty}\prob{e^{-\tau \xi_i^2/t^2}>r} dr=\int_{0}^{1}\prob{e^{-\tau \xi_i^2/t^2}>r} dr\\
&= \int_{0}^{e^{-\tau\lambda^2/t^2}}\prob{e^{-\tau \xi_i^2/t^2}>r} dr + \int_{e^{-\tau\lambda^2/t^2}}^{1}\prob{e^{-\tau \xi_i^2/t^2}>r} dr\\
&\leq e^{-\tau\lambda^2/t^2} + (1-\gamma^2)\left(1-e^{-\tau\lambda^2/t^2}\right).
\end{split}
\end{equation}
}}Let $\tau:=\alpha t^2/\lambda^2=4\alpha t^2$ with $t:=\frac{\gamma}{4}$ and $\alpha=\ln 2$. It follows from~\eqref{expoTauNIneq} and~\eqref{TauExpect} that 
{\small{
\begin{align*}
\prob{\normiiN{\Amatrix \x}\leq \frac{\gamma}{4}\sqrt{\frac{N}{n}}}
&\leq e^{4\alpha t^2 N}\left(1-\gamma^2\left(1-e^{-\alpha}\right)\right)^N
\\
&\leq \left(e^{4\alpha t^2-\gamma^2\left(1-e^{-\alpha}\right)}\right)^N=:\psi^N,
\end{align*}}}
where the last inequality follows from $1-\mathfrak{u}\leq e^{-\mathfrak{u}}$ with $\mathfrak{u}=\gamma^2\left(1-e^{-\alpha}\right)$. Then, the proof is completed by observing that $4\alpha t^2-\gamma^2\left(1-e^{-\alpha}\right)=\gamma^2\left(\frac{\ln 2}{4}-\frac{1}{2}\right)<0$.
\end{proof}

\begin{remark}\label{epsilonRemark}
Consider \scalebox{0.88}{$\gamma={\bl\frac{3\ln\left(\frac{q}{2}+1\right)}{ 8C_4^2q}\in (0,1)}$} and \scalebox{0.88}{$C_0={\bl \frac{1}{\tilde{c}_{2,3}}\sqrt{\frac{\ln 5}{\ln{\left(\frac{q}{2}+1\right)}}}}$} of Lemma~\ref{smallBallLemma} and Theorem~\ref{LargestSingResult}, respectively.  
Let \scalebox{0.88}{$\kappa_0:={\bl \frac{3\tilde{c}_{2,3}}{128C_4^2\sqrt{\ln 5}}}$} and \scalebox{0.88}{$\epsilon_q:=\frac{\gamma}{8C_0\hat{q}}=\frac{2\kappa_0\left(\ln\left(\frac{q}{2}+1\right)\right)^{3/2}}{q\hat{q}}=$} \scalebox{0.88}{$\kappa_0\left(\frac{2}{q}\ln\left(\frac{q}{2}+1\right)\right)^{3/2}<1$}. Recalling \scalebox{0.88}{$\psi=e^{-\kappa_0'\gamma^2}$} of Lemma~\ref{TensorLemma} where \scalebox{0.88}{$\kappa_0'=\frac{1}{2}-\frac{\ln 2}{4}>0$}, we observe that \scalebox{0.88}{$\frac{1}{3}\epsilon_q e^{\kappa_0'\gamma^2}<\frac{1}{3}\kappa_0e^{\kappa_0'}\left(\frac{2}{q}\ln\left(\frac{q}{2}+1\right)\right)^{3/2}< 1$}, which implies that \scalebox{0.88}{$\epsilon_q<3e^{-\kappa_0'\gamma^2}=3\psi$}, whence \scalebox{0.88}{$\epsilon_q<\min\accolade{3\psi,\ 1}$}.
\end{remark}

Combining Lemma~\ref{TensorLemma} with the $\varepsilon$-net argument leads to the following estimation of $s_n(\Amatrix)$ with significantly different dimensions of $\Amatrix$, as was also the case in~\cite[Proposition~4.7]{Rudelson2014LectNotes}.
\begin{theorem}\label{SmallestSingResult}(Smallest singular value).
Consider the matrix \scalebox{0.85}{$\Amatrix\in \rNn$} of~\eqref{AMatrixEnsemble} with \scalebox{0.88}{$N$} sufficiently large. Let \scalebox{0.88}{$\gamma, \psi\in (0,1)$} be the same constants of Lemma~\ref{TensorLemma}, and let \scalebox{0.88}{$\epsilon_q\in (0,1)$} be defined as in Remark~\ref{epsilonRemark}. Fix \scalebox{0.88}{$\kappa\in (0, 1)$}, and assume that \scalebox{0.88}{$n\leq \nu_0 N$}, where \scalebox{0.88}{$\nu_0\in (0, \alpha_0)$} with \scalebox{0.88}{$\alpha_0:=\dfrac{\kappa\ln(1/\psi)}{\ln(3/\epsilon_q)}\in (0, 1)$}. Then, 
{\footnotesize{
\begin{equation}\label{smallestResult}
\prob{s_n(\Amatrix)\leq \kappa_1\sqrt{\frac{N}{n}}}\leq e^{-\kappa_2 N},
\end{equation}
}}
where \scalebox{0.88}{$\kappa_1:=\kappa_1(q)=\frac{\gamma}{8}$} and \scalebox{0.88}{$\kappa_2:=\kappa_2(q)=\frac{1}{4}(1-\kappa)\left(1-\frac{\ln 2}{2}\right)\gamma^2$}.
\end{theorem}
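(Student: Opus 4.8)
The plan is to run the classical $\varepsilon$-net argument for the hard edge, discretizing the \emph{low-dimensional} sphere $\snOne$ at scale $\varepsilon=\epsilon_q$ and comparing $\normiiN{\Amatrix\ub}$, for an arbitrary $\ub\in\snOne$, with $\normiiN{\Amatrix\x}$ at the nearest net point, absorbing the discretization error through $s_1(\Amatrix)$. First I would invoke Lemma~\ref{volumLemma} to fix an $\epsilon_q$-net $\n\subset\snOne$ with $\abs{\n}\le(1+2/\epsilon_q)^n\le(3/\epsilon_q)^n$, the last inequality using $\epsilon_q<1$ from Remark~\ref{epsilonRemark}. Given $\ub\in\snOne$, pick $\x\in\n$ with $\normiin{\ub-\x}<\epsilon_q$ (Definition~\ref{netDefinition}). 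Since $s_1(\Amatrix)=\norme{\Amatrix:\rn\to\R^N}$,
\[\normiiN{\Amatrix\ub}\ge\normiiN{\Amatrix\x}-s_1(\Amatrix)\,\normiin{\ub-\x}\ge\min_{\x\in\n}\normiiN{\Amatrix\x}-\epsilon_q\, s_1(\Amatrix),\]
and minimizing over $\ub\in\snOne$ yields $s_n(\Amatrix)\ge\min_{\x\in\n}\normiiN{\Amatrix\x}-\epsilon_q\, s_1(\Amatrix)$.

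Next I would introduce the events $\mathcal{E}_1:=\accolade{\min_{\x\in\n}\normiiN{\Amatrix\x}>\frac{\gamma}{4}\sqrt{\frac{N}{n}}}$ and $\mathcal{E}_2:=\accolade{s_1(\Amatrix)\le\hat q\,C_0\sqrt{\frac{N}{n}}}$. On $\mathcal{E}_1\cap\mathcal{E}_2$, the displayed bound and the identity $\epsilon_q=\gamma/(8C_0\hat q)$ from Remark~\ref{epsilonRemark} give $s_n(\Amatrix)>\frac{\gamma}{4}\sqrt{\frac{N}{n}}-\frac{\gamma}{8}\sqrt{\frac{N}{n}}=\kappa_1\sqrt{\frac{N}{n}}$, so $\accolade{s_n(\Amatrix)\le\kappa_1\sqrt{\frac{N}{n}}}\subseteq\mathcal{E}_1^c\cup\mathcal{E}_2^c$ and it suffices to bound each of $\prob{\mathcal{E}_1^c}$ and $\prob{\mathcal{E}_2^c}$. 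For $\mathcal{E}_2^c$, Theorem~\ref{LargestSingResult} with $t=C_0$ gives $\prob{\mathcal{E}_2^c}\le e^{-c_0C_0^2N}=e^{-(\ln5)N}$ because $c_0C_0^2=\ln5$. For $\mathcal{E}_1^c$, a union bound over $\n$ combined with Lemma~\ref{TensorLemma} gives $\prob{\mathcal{E}_1^c}\le\abs{\n}\,\psi^N\le e^{\,n\ln(3/\epsilon_q)-N\ln(1/\psi)}$; here the aspect-ratio hypothesis enters: with $n\le\nu_0N$ and $\nu_0<\alpha_0=\kappa\ln(1/\psi)/\ln(3/\epsilon_q)$ one gets $n\ln(3/\epsilon_q)<\kappa N\ln(1/\psi)$, hence $\prob{\mathcal{E}_1^c}\le e^{-(1-\kappa)\ln(1/\psi)N}$. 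Using $\ln(1/\psi)=\frac12(1-\frac{\ln2}{2})\gamma^2$ one has $(1-\kappa)\ln(1/\psi)=2\kappa_2$ and $\ln5>\ln(1/\psi)>2\kappa_2$, so both probabilities are at most $e^{-2\kappa_2N}$; therefore $\prob{s_n(\Amatrix)\le\kappa_1\sqrt{\frac{N}{n}}}\le2e^{-2\kappa_2N}\le e^{-\kappa_2N}$ for $N$ sufficiently large, which is~\eqref{smallestResult}.

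The only genuinely delicate point is the balance inside the bound on $\prob{\mathcal{E}_1^c}$: the net lives on the $n$-dimensional sphere, so its cardinality contributes a factor $e^{\,n\ln(3/\epsilon_q)}$, which must be defeated by the per-point small-ball factor $\psi^N=e^{-N\ln(1/\psi)}$ from Lemma~\ref{TensorLemma}; this succeeds precisely when $n$ is a sufficiently small fraction of $N$, which is exactly what $n\le\nu_0N$ with $\nu_0<\alpha_0$ encodes. Remark~\ref{epsilonRemark} supplies the remaining bookkeeping: the relation $\epsilon_q=\gamma/(8C_0\hat q)$ makes the net-error term equal $\gamma/8$ on the nose, while the inequality $\epsilon_q<3\psi$ guarantees $\alpha_0\in(0,1)$ so that the hypothesis on $n$ is non-vacuous. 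All remaining steps are routine propagation of the explicit constants $\gamma,\psi,\epsilon_q,C_0,c_0$ already computed in Lemmas~\ref{TensorLemma} and~\ref{volumLemma} and Theorem~\ref{LargestSingResult}.
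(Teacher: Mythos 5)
Your proof is correct and follows essentially the same route as the paper's: the same $\epsilon_q$-net on $\snOne$ via Lemma~\ref{volumLemma}, the same two "good" events (small-ball lower bound over the net via Lemma~\ref{TensorLemma} plus operator-norm upper bound via Theorem~\ref{LargestSingResult}), the same cancellation $\frac{\gamma}{4}-C_0\hat q\,\epsilon_q=\frac{\gamma}{8}$ from Remark~\ref{epsilonRemark}, and the same aspect-ratio inequality $n\ln(3/\epsilon_q)<\kappa N\ln(1/\psi)$ to defeat the net's cardinality. The only cosmetic difference is that you bound the two failure probabilities by a common $e^{-2\kappa_2N}$ before absorbing the factor of $2$, whereas the paper picks $c_3=\kappa_2$ directly and cites $N$ sufficiently large; the constants match identically.
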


\begin{proof}
The proof is inspired by that of~\cite[Proposition~4.7]{Rudelson2014LectNotes}. Let \scalebox{0.88}{$\n\subset\snOne$} be an \scalebox{0.88}{$\epsilon_q$}-net with cardinality \scalebox{0.88}{$\abs{\n}\leq \left(\frac{3}{\epsilon_q}\right)^n$} thanks to Lemma~\ref{volumLemma}. By Lemma~\ref{TensorLemma},
{\footnotesize{
\begin{equation}\label{unionBoundIneq}
\prob{\exists \y\in\n\,: \normiiN{\Amatrix \y}\leq \frac{\gamma}{4}\sqrt{\frac{N}{n}}}\leq\sum_{\y\in \n}\prob{\normiiN{\Amatrix \y}\leq \frac{\gamma}{4}\sqrt{\frac{N}{n}}}\leq \psi^N\left(\frac{3}{\epsilon_q}\right)^n.
\end{equation}
}}
Consider the event \scalebox{0.88}{$\mathscr{E}:=\accolade{\norme{\Amatrix}\leq \hat{q}C_0\sqrt{\frac{N}{n}}}\cap\accolade{\normiiN{\Amatrix \y}>\frac{\gamma}{4}\sqrt{\frac{N}{n}}\ \forall\y\in\n}$}, where \scalebox{0.88}{$C_0$} is the constant of Theorem~\ref{LargestSingResult}. Since \scalebox{0.88}{$N$} is sufficiently large,~\eqref{unionBoundIneq} and Theorem~\ref{LargestSingResult} yield
{\footnotesize{
\begin{equation}\label{VcIneq}
\prob{\mathscr{E}^c}\leq\prob{\norme{\Amatrix}>\hat{q}C_0\sqrt{\frac{N}{n}}}+\prob{\exists\y\in\n\,: \normiin{\Amatrix\y}\leq \frac{\gamma}{4}\sqrt{\frac{N}{n}}}\leq e^{-c_0C_0^2 N}+\left(\psi\left(\frac{3}{\epsilon_q}\right)^{\frac{n}{N}}\right)^N,
\end{equation}
}}
where $c_0$ is the constant of Theorem~\ref{LargestSingResult} satisfying \scalebox{0.88}{$c_0C_0^2=\ln 5$}. We observe that \scalebox{0.88}{$\alpha_0=\frac{\kappa\ln(1/\psi)}{\ln(3/\epsilon_q)}<\kappa<1$} since \scalebox{0.88}{$\epsilon_q<3\psi$} by Remark~\ref{epsilonRemark}, and the inequality \scalebox{0.88}{$\frac{n}{N}< \alpha_0=\frac{\ln(\psi^{-\kappa})}{\ln(3/\epsilon_q)}$} yields \scalebox{0.88}{$\psi\left(\frac{3}{\epsilon_q}\right)^{\frac{n}{N}}=\psi e^{\frac{n}{N}\ln (3/\epsilon_q)}<e^{-\ln(\psi^{\kappa-1})}<1$}. Thus, since \scalebox{0.88}{$N$} is sufficiently large, it follows from~\eqref{VcIneq} that \scalebox{0.88}{$\prob{\mathscr{E}^c}\leq e^{-(\ln 5)N}+e^{-\ln(\psi^{\kappa-1})N}\leq e^{-c_3 N}$} for some \scalebox{0.88}{$c_3>0$}, provided that \\ \scalebox{0.88}{$c_3 <\min\accolade{\ln(\psi^{\kappa-1}),\ \ln 5}$}. Recalling \scalebox{0.88}{$\psi=e^{\gamma^2\left(\frac{\ln 2}{4}-\frac{1}{2}\right)}$}, we choose \scalebox{0.88}{$c_3:=\frac{1}{2}\ln(\psi^{\kappa-1})=\kappa_2<\ln 5$}, where we used the fact that 
{\bl \scalebox{0.88}{$\kappa_2=\frac{1}{4}\gamma^2(1-\kappa)\left(1-\frac{\ln 2}{2}\right)<\frac{1}{4}\left(1-\frac{\ln 2}{2}\right)<\ln 5$}.}

Assume that \scalebox{0.88}{$\mathscr{E}$} occurs. \scalebox{0.88}{$\forall\x\in \snOne,\ \exists \y\in\n$} such that \scalebox{0.88}{$\normiin{\y-\x}<\epsilon_q=\frac{\gamma}{8C_0\hat{q}}$}. Thus, 
\scalebox{0.88}{$\normiiN{\Amatrix \x}=\normiiN{\Amatrix [\y-(\y-\x)]}\geq$} \scalebox{0.88}{$\normiiN{\Amatrix \y}-\norme{\Amatrix}\normiin{\y-\x}>\frac{\gamma}{4}\sqrt{\frac{N}{n}}-C_0\hat{q}\epsilon_q\sqrt{\frac{N}{n}}=\frac{\gamma}{8}\sqrt{\frac{N}{n}},$} whence \scalebox{0.88}{$\mathscr{E}\subseteq\accolade{\normiiN{\Amatrix \x}>\kappa_1\sqrt{\frac{N}{n}}\ \forall\x\in\snOne}=:\mathscr{F}$}.\\ Since \scalebox{0.88}{$\accolade{\underset{\x\in\snOne}{\min}\normiiN{\Amatrix\x}\leq \kappa_1\sqrt{\frac{N}{n}}}\subseteq \mathscr{F}^c$}, then \scalebox{0.88}{$\prob{s_n(\Amatrix) \leq \kappa_1\sqrt{\frac{N}{n}}}\leq \prob{\mathscr{F}^c}\leq \prob{\mathscr{E}^c}\leq e^{-\kappa_2 N}$}, which achieves the proof.
\end{proof}

The result of Theorem~\ref{SmallestSingResult} shows that the hashing parameter~$s$ can be chosen so that the smallest singular value remains bounded below by a fixed quantity with high probability, as detailed next.

\begin{remark}\label{smalSingRemark1}
Assume that the entries \scalebox{0.88}{$\Xij$} defining \scalebox{0.88}{$\Amatrix$} are distributed as in Corollary~\ref{LargestSingCor}, where for any \scalebox{0.88}{$n\geq 1$}, \scalebox{0.88}{$s=s(n)=\frac{2n}{q}$} for some fixed $q\geq 2$. Then it holds that \scalebox{0.88}{$\prob{s_n(\Amatrix)\geq \kappa_1(q)}\geq \prob{s_n(\Amatrix)\geq \kappa_1(q)\sqrt{\frac{N}{n}}}=1-\mathcal{O}\left(N^{-\kappa_2(q)}\right)$}.
\end{remark}

\section{Conclusion}
Inspired by $s$-hashing matrices with exactly $s$ nonzero elements on each column, this manuscript introduces {\bl an ensemble of sparse matrices encompassing so-called} $s$-hashing-like matrices with sub-Gaussian entries, whose expected number of nonzero elements on each column is~$s$. Using properties of sub-Gaussian and subexponential random variables, we demonstrate such matrices  to be Johnson--Lindenstrauss transforms (JLTs), and analyses of their extreme singular values have been carried out, which are largely facilitated by the knowledge of the exact distribution of their independent entries. While  many  works exist on the extreme singular values of random matrices with sub-Gaussian entries,  none, to our knowledge,  is specific to either $s$-hashing or $s$-hashing-like matrices. 
%Indeed, the main results of this manuscript are clearly stated in terms of the parameter $s$, the dimensions of the matrices, and known constants, unlike what is generally the case in random matrix theory where constants are often unknown and assumed to be absolute. 
%On the other hand, numerical experiments yield similar results for the $s$-hashing and $s$-hashing-like matrices and also suggest better bounds on $s$, the extreme singular values, and so forth, than are predicted by the theoretical analyses.

%Future work may focus on obtaining sharper bounds than those derived in this manuscript; and 
Since $s$-hashing matrices are already shown to be JLTs, future work may focus on theoretical analyses of their extreme singular values.

\subsection*{Acknowledgments}
This material was based upon work supported by the U.S.\ Department of
Energy, Office of Science, Office of Advanced Scientific Computing
Research, applied mathematics and SciDAC programs under Contract Nos.\
DE-AC02-05CH11231 and DE-AC02-06CH11357.

\bibliography{hashing-bib}
\bibliographystyle{abbrvnat} 

%\bibliographystyle{siamplain}
%\bibliography{hashing-bib}

{\bl
\section*{Appendix}
In the experiments simulating the results of Corollary~\ref{CorBaiYin}, the independent entries \scalebox{0.88}{$\Hij:=\frac{1}{\sqrt{s}}\Yij$}, with  \scalebox{0.88}{$\prob{\Yij=\pm 1}=$} \scalebox{0.88}{$\frac{s}{2n}=\frac{1}{q}$} and \scalebox{0.88}{$\prob{\Yij= 0}=1-\frac{s}{n}=1-\frac{2}{q}$}, of $s$-hashing-like matrices \scalebox{0.88}{$\Hfrac$} are generated as follows. Let \scalebox{0.88}{$\Uij\sim\mathcal{U}(0,1)$} be a uniform random variable. Then, \scalebox{0.88}{$\Yij=-1$} if \scalebox{0.88}{$\Uij\in \left[\left.0, \frac{s}{2n}\right)\right.$}, \scalebox{0.88}{$\Yij=1$} if \scalebox{0.88}{$\Uij\in \left[\left.\frac{s}{2n}, \frac{s}{n} \right)\right.$}, and \scalebox{0.88}{$\Yij=0$} if \scalebox{0.88}{$\Uij\in \left[\frac{s}{n}, 1 \right]$}, so that \scalebox{0.88}{$\Yij$} is distributed as above.
}

To simulate the results of Corollary~\ref{CorBaiYin}, we consider \scalebox{0.88}{$\accolade{N_{\ell}}_{\ell\leq 100}$}, \scalebox{0.88}{$\accolade{n_{\ell}}_{\ell\leq 100}$} and \scalebox{0.88}{$\accolade{s_{\ell}}_{\ell\leq 100}$} defined as follows. The increasing terms of \scalebox{0.85}{$\accolade{N_{\ell}}_{\ell\leq 100}:=500,\ \, 527,\ \,  556,\ \, 587,\ \, 619,$} \scalebox{0.85}{$653,\  \dots,\  94789,\ \, 89849,\ \,  10^5$} are obtained by rounding \scalebox{0.88}{$100$} numbers (between \scalebox{0.88}{$500$} and \scalebox{0.88}{$10^5$}) arranged on a log scale, using \scalebox{0.9}{${\sf round(logspace(log10(500),\ log10(1e5),\ 100))}$}, a {\sf MATLAB} command. Then for each \scalebox{0.88}{$\ell\in \intbracket{1,100}$}, \scalebox{0.88}{$n_{\ell}$} is obtained by rounding \scalebox{0.88}{$N_{\ell}/100$} while~\scalebox{0.88}{$s_{\ell}$} is computed by rounding \scalebox{0.88}{$n_{\ell}/5$} so that the aspect ratio \scalebox{0.88}{$\frac{n_{\ell}}{N_{\ell}}\to \mathfrak{y}:=\frac{1}{100}$} and \scalebox{0.88}{$\frac{s_{\ell}}{n_{\ell}}\approx \frac{1}{5}$} as $\ell$ gets larger. Consider the $s_{\ell}$-hashing-like matrices \scalebox{0.88}{$\Amatrix_{\ell}=\Hfrac_{\ell}^{\top}\in \R^{N_{\ell} \times n_{\ell}}$} for \scalebox{0.88}{$\ell = 1,\dots,100$}. 
 
For each \scalebox{0.88}{$\ell$}, realizations \scalebox{0.88}{$s_{1,\ell}^{1},\dots,s_{1,\ell}^{100}$} of \scalebox{0.88}{$s_1(\Amatrix_{\ell})$}, and \scalebox{0.88}{$s_{n,\ell}^{1},\dots,s_{n,\ell}^{100}$} of \scalebox{0.88}{$s_n(\Amatrix_{\ell})$} are generated, as well as \scalebox{0.88}{$\ m_{1,\ell}$} and \scalebox{0.88}{$\ M_{1,\ell}$} defined respectively by \scalebox{0.88}{$\ m_{1,\ell}:=\min\accolade{s_{1,\ell}^{k},\ k\in \intbracket{1,100}}$} and \scalebox{0.88}{$\ M_{1,\ell}:=\max\accolade{s_{1,\ell}^{k},\ k\in \intbracket{1,100}}$}, and the average or sample mean \scalebox{0.88}{$\bar{s}_{1,\ell}:=\frac{1}{100}\sum_{k=1}^{100}s_{1,\ell}^{k}$}; and \scalebox{0.88}{$m_{n,\ell}$}, \scalebox{0.88}{$M_{n,\ell}$} and \scalebox{0.88}{$\bar{s}_{n,\ell}$} are defined analogously. Figure~\ref{lab01} shows on the one hand the graphs of \scalebox{0.88}{$m_{1,\ell}$}, \scalebox{0.88}{$M_{1,\ell}$}, and \scalebox{0.88}{$\bar{s}_{1,\ell}$} with respect to \scalebox{0.88}{$N_{\ell}$}, for \scalebox{0.88}{$\ell=1,\dots,100$}, and on the other hand similar graphs for the smallest singular values, using \scalebox{0.88}{$m_{n,\ell}$}, \scalebox{0.88}{$M_{n,\ell}$}, and \scalebox{0.88}{$\bar{s}_{n,\ell}$}. These graphs confirm the predictions of the theory that (almost surely) \scalebox{0.88}{$s_n(\Amatrix_{\ell})\to \frac{1}{\sqrt{\mathfrak{y}}}-1=9$} and \scalebox{0.88}{$s_1(\Amatrix_{\ell})\to \frac{1}{\sqrt{\mathfrak{y}}}+1=11$} as \scalebox{0.88}{$\ell$} gets larger.

% This can be deleted in the version that appears, it is only needed for the Argonne PANDA submission and ArXiv submission:
\clearpage
\framebox{\parbox{\linewidth}{
The submitted manuscript has been created by UChicago Argonne, LLC, Operator of 
Argonne National Laboratory (``Argonne''). Argonne, a U.S.\ Department of 
Energy Office of Science laboratory, is operated under Contract No.\ 
DE-AC02-06CH11357. 
The U.S.\ Government retains for itself, and others acting on its behalf, a 
paid-up nonexclusive, irrevocable worldwide license in said article to 
reproduce, prepare derivative works, distribute copies to the public, and 
perform publicly and display publicly, by or on behalf of the Government.  The 
Department of Energy will provide public access to these results of federally 
sponsored research in accordance with the DOE Public Access Plan. 
http://energy.gov/downloads/doe-public-access-plan.}}

\end{document}